\documentclass{amsart}
\usepackage{latexsym,amsxtra,amscd,ifthen}
\usepackage{amsfonts}
\usepackage{verbatim}
\usepackage{amsmath}
\usepackage{amsthm}
\usepackage{amssymb}
\usepackage{color}
\numberwithin{equation}{section}

\theoremstyle{plain}
\newtheorem{theorem}{Theorem}[section]
\newtheorem{lemma}[theorem]{Lemma}
\newtheorem{proposition}[theorem]{Proposition}
\newtheorem{hypothesis}[theorem]{Hypotheses}
\newtheorem{corollary}[theorem]{Corollary}

\newcommand{\inth}{\textstyle \int}
\theoremstyle{definition}
\newtheorem{definition}[theorem]{Definition}
\newtheorem{example}[theorem]{Example}

\newtheorem{remark}[theorem]{Remark}
\newtheorem{question}[theorem]{Question}
\newtheorem{notation}[theorem]{Notation}

\makeatletter              
\let\c@equation\c@theorem  
\makeatother

\newcommand{\bfl}{\mathfrak l}

\DeclareMathOperator{\hdet}{hdet}

\DeclareMathOperator{\Ext}{Ext} \DeclareMathOperator{\Tor}{Tor}

 \DeclareMathOperator{\ann}{ann}

\DeclareMathOperator{\Aut}{Aut}
\DeclareMathOperator{\rk}{rk}

\DeclareMathOperator{\GKdim}{GKdim}
\DeclareMathOperator{\End}{End} 
 \DeclareMathOperator{\Hom}{Hom}

\DeclareMathOperator{\GrMod}{{\sf GrMod}}

\newcommand{\fm}{\mathfrak{m}}
\newcommand{\p}{{\sf p}}
\newcommand{\HH}{{\text{H}}^d_{\fm}}

\newcommand{\be}{\begin{enumerate}}
\newcommand{\ee}{\end{enumerate}}
\newcommand{\bq}{\begin{eqnarray*}}
\newcommand{\eq}{\end{eqnarray*}}
\newcommand{\bqn}{\begin{eqnarray}}
\newcommand{\eqn}{\end{eqnarray}}

\begin{document}
\title[Reflection Hopf algebras]
{The Jacobian, reflection arrangement and 
discriminant for reflection Hopf algebras}

\author{E. Kirkman and J.J. Zhang}

\address{Kirkman: Department of Mathematics,
P. O. Box 7388, Wake Forest University, Winston-Salem, NC 27109}

\email{kirkman@wfu.edu}

\address{Zhang: Department of Mathematics, Box 354350,
University of Washington, Seattle, Washington 98195, USA}

\email{zhang@math.washington.edu}

\begin{abstract}
We study finite dimensional semisimple Hopf algebra actions on 
noetherian connected graded Artin-Schelter regular algebras 
and introduce definitions of the Jacobian, the reflection 
arrangement and the discriminant in a noncommutative setting.
\end{abstract}

\subjclass[2010]{16E10, 16E65, 16T05, 16W22, 20J99}


\keywords{Artin-Schelter regular algebra, Hopf algebra action, fixed 
subring, reflection Hopf algebra, Jacobian, reflection arrangement, 
discriminant}


\maketitle


\setcounter{section}{-1}

\section{Introduction}
\label{xxsec0}

The Shephard-Todd-Chevalley Theorem states that if $G$ is a 
finite group acting linearly and faithfully on the commutative 
polynomial ring $\Bbbk[x_1,\cdots, x_n]$, where the 
characteristic of the base field $\Bbbk$ is zero, the fixed 
subring ${\Bbbk}[x_1,\cdots, x_n]^G$ is isomorphic to 
$\Bbbk[x_1,\cdots, x_n]$ if and only if $G$ is generated by 
pseudo-reflections of the space $V:=\bigoplus_{i=1}^n {\Bbbk} 
x_i^{\ast}$. Such a group $G$ is called a {\it reflection group}. 
Note that ${\Bbbk}[x_1,\cdots, x_n]$ is the ring of regular 
functions on $V$. This paper is part of a project to extend
properties of the action of reflection groups on commutative 
polynomial algebras to a noncommutative setting.

In the noncommutative setting we consider here, the commutative 
polynomial ring ${\Bbbk}[x_1,\cdots, x_n]$ is replaced by an 
Artin-Schelter regular $\Bbbk$-algebra, denoted by $A$, and the 
group $G$ (or the group ring $\Bbbk G$) is replaced by a (finite 
dimensional) semisimple Hopf $\Bbbk$-algebra, denoted by $H$. We 
say $H$ is a {\it reflection Hopf algebra} or {\it reflection 
quantum group} if the fixed subring $A^H$ \eqref{E0.1.2} is again 
Artin-Schelter regular \cite[Definition 3.2]{KKZ4}. The first 
example of a noncommutative and noncocommutative reflection Hopf 
algebra (the Kac-Palyutkin algebra \cite{KP}  acting on 
$\Bbbk_{i}[u,v]$ where $i^2=-1$) was given in 
\cite[Example 7.4]{KKZ3}. A systematic study of dual reflection 
groups (where $H = (\Bbbk G)^\ast$) was begun in \cite{KKZ4}. 
This noncommutative (and noncocommutative) context for 
noncommutative invariant theory has proved fruitful, and results 
include:

\begin{enumerate}
\item[(a)]
The rigidity of (noetherian) Artin-Schelter regular algebras under finite group or
semisimple Hopf algebra actions \cite{AP, CKZ1, KKZ1, KKZ4}.
\item[(b)]
The homological determinant and Watanabe's theorem. The homological 
determinant of a group action on Artin-Schelter regular algebras 
was introduced in \cite{JZ}, and that of a Hopf action in \cite{KKZ3}.
\item[(c)]
The Nakayama automorphism and twisted (skew) Calabi-Yau property
\cite{CWZ, KKZ4, RRZ2, RRZ3}.
\item[(d)]
The pertinency and radical ideal associated to Hopf actions on 
Artin-Schelter regular algebras, Auslander's theorem, the McKay 
correspondence, and noncommutative resolutions 
\cite{BHZ1, BHZ2, CKWZ2, CKWZ3, CKZ2, GKMW, QWZ}.
\end{enumerate}

A survey on noncommutative invariant theory in this context is given 
in \cite{Ki}. 

An important  topic in classical invariant theory is the 
arrangements of hyperplanes associated to reflection groups \cite{OT}.
It is related to combinatorics, algebra, geometry, representation
theory, complex analysis and other fields. 

In this paper we investigate the possibility of defining a noncommutative 
version of a hyperplane arrangement. Some fundamental work of 
Steinberg \cite{Ste}, Stanley \cite{Sta}, Terao \cite{Te}, 
Hartmann-Shepler \cite{HS}, Orlik-Terao \cite{OT} and many others 
offered an algebraic approach that can be adapted to the 
noncommutative case. In particular, we will introduce 
a few concepts that characterize significant structures of  the
actions of reflection Hopf algebras on Artin-Schelter regular algebras. 

Throughout the rest of this paper, let $\Bbbk$ be a base field that is 
algebraically closed, and all vector spaces, (co)algebras, Hopf 
algebras, and morphisms are over $\Bbbk$. In general we do not need to 
assume that the characteristic of $\Bbbk$ is zero. However, in several 
places where we use results from other papers (e.g. \cite{KKZ2, KKZ4}), 
we add the characteristic zero hypothesis because those results were 
proved under that extra hypothesis. Let $H$ denote a semisimple (hence 
finite dimensional) Hopf algebra, and let $K$ be the $\Bbbk$-linear 
Hopf dual $H^*$ of $H$. Throughout we use standard notation 
(see e.g. \cite{Mo1}) for a Hopf algebra $H(\Delta, \epsilon, S)$. 
It is well-known that a left $H$-action on an algebra $A$ is 
equivalent to a right $K$-coaction on $A$, and we will use this 
fact freely.  Let $\GKdim A$ denote the Gelfand-Kirillov dimension 
of the algebra $A$ \cite{KL}. Let $\Bbbk^\times$ be the set of 
invertible elements in $\Bbbk$. If $f, g\in A$ and $f = cg$ for 
some $c\in \Bbbk^{\times}$, then we write $f =_{\Bbbk^{\times}} g$.

\begin{hypothesis}
\label{xxhyp0.1} Assume the following hypotheses:
\begin{enumerate}
\item[(a)]
$A$ is a noetherian connected graded Artin-Schelter regular algebra 
that is a domain, see Definition \ref{xxdef1.1}, 
\item[(b)]
$H$ is a semisimple Hopf algebra,
\item[(c)]
$H$ acts on $A$ inner-faithfully \cite[Definition 1.5]{CKWZ1} 
and homogeneously so that $A$ is a left $H$-module algebra,
\item[(d)]
$H$ acts on $A$ as a reflection Hopf algebra in the sense of 
\cite[Definition 3.2]{KKZ4}, or equivalently, of Definition \ref{xxdef1.4}.
\end{enumerate}
\end{hypothesis}

Let $G(K)$ be the group of grouplike elements in $K:=H^{\ast}$. 
For each $g\in G(K)$, define
\begin{equation}
\label{E0.1.1}\tag{E0.1.1}A_{g}:=\{a\in A\mid \rho(a)=a \otimes g\},
\end{equation}
where $\rho: A \rightarrow A \otimes K$ is the corresponding right 
coaction of $K$ on $A$. The fixed subring of the $H$-action on $A$
is defined to be
\begin{equation}
\label{E0.1.2}\tag{E0.1.2}
A^H:=\{ a\in A\mid h\cdot a=\epsilon(h) a\; \forall \; h\in H\}.
\end{equation}
We refer to \cite[Section 3]{KKZ3} for the definition of the 
{\it homological determinant} of the $H$-action on $A$. Let 
$\hdet: H\to \Bbbk$ be the homological determinant of the 
$H$-action on $A$. Then $\hdet$, considered as an element in $K$, 
is a grouplike element. By \cite[Theorem 0.6]{CKWZ1}, $\hdet$ is 
nontrivial (unless $A=A^H$) when $H$ is a reflection Hopf algebra.
Since $\hdet$ is an element in $G(K)$, both $A_{\hdet}$ and 
$A_{\hdet^{-1}}$ are defined by \eqref{E0.1.1}.

\begin{theorem}[Corollary \ref{xxcor2.5}(1) and Theorem \ref{xxthm3.8}(1)]
\label{xxthm0.2} Assume Hypotheses \ref{xxhyp0.1}. Let $R$ be the 
fixed subring $A^H$.
\begin{enumerate}
\item[(1)]
There is a nonzero element \; ${\mathsf j}_{A,H}\in A$, unique up to a nonzero
scalar, such that $A_{\hdet^{-1}}$ is a free $R$-module of rank one on 
both sides generated by ${\mathsf j}_{A,H}$.
\item[(2)]
There is a nonzero element \; ${\mathsf a}_{A,H}\in A$, unique up to a nonzero
scalar, such that $A_{\hdet}$ is a free $R$-module of rank one on 
both sides generated by ${\mathsf a}_{A,H}$.
\item[(3)]
The products ${\mathsf j}_{A,H}{\mathsf a}_{A,H}$ and 
${\mathsf a}_{A,H}{\mathsf j}_{A,H}$ in $A$ are elements of $R$ 
that are either equal, or they differ only by 
a nonzero scalar in $\Bbbk$, or equivalently, ${\mathsf j}_{A,H}{\mathsf a}_{A,H}
=_{\Bbbk^{\times}} {\mathsf a}_{A,H}{\mathsf j}_{A,H}$.
\end{enumerate}
\end{theorem}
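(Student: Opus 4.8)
The plan is to exploit the $G(K)$-grading on $A$ induced by the coaction. First I would record that for grouplike $g,h\in G(K)$ one has $\rho(ab)=\rho(a)\rho(b)=(a\otimes g)(b\otimes h)=ab\otimes gh$, so that $A_gA_h\subseteq A_{gh}$, that $A_1=A^H=R$, and that each $A_g$ is a graded $R$-sub-bimodule of $A$ which is moreover a $\Bbbk$-module direct summand (as $H$ is semisimple). In these terms (1) and (2) become the single assertion that $A_{\hdet^{-1}}$ and $A_{\hdet}$ are free of rank one over $R$ on each side; I would prove this for an arbitrary grouplike $g$ and then specialize.

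The engine is a maximal Cohen--Macaulay/freeness argument. Since $H$ is finite dimensional and semisimple and $A$ is noetherian, $A$ is a finitely generated graded module over $R=A^H$, and $\GKdim A=\GKdim R=:n$. Because $H$ is a reflection Hopf algebra, $R$ is Artin--Schelter regular, and the finite extension $R\subseteq A$ together with the Artin--Schelter Cohen--Macaulay property of $A$ gives $\depth_R A=n$, i.e. $A$ is MCM over $R$ on each side. By the noncommutative Auslander--Buchsbaum equality $\pdim_R A+\depth_R A=n$, $A$ is projective, hence (being graded over a connected graded $R$) graded free on each side. Each $A_g$ is an $R$-bimodule direct summand of $A$, so it too is graded free on each side. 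To pin the rank I would compare Hilbert series: the coinvariant module $A/R_{\ge 1}A$ carries the regular representation of $H$, so $\operatorname{rank}_R A_V=\dim_\Bbbk V$ for every irreducible $H$-module $V$, and for a one-dimensional $V$ corresponding to a grouplike this rank is $1$. Thus each $A_g$, in particular $A_{\hdet^{\pm 1}}$, is free of rank one on each side. A graded free rank-one module has a one-dimensional lowest-degree component, which forces its generator to be unique up to a nonzero scalar and to serve simultaneously as a left and a right generator; calling these generators ${\mathsf j}_{A,H}$ (for $\hdet^{-1}$) and ${\mathsf a}_{A,H}$ (for $\hdet$) proves (1) and (2).

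For (3), write $\mathsf j={\mathsf j}_{A,H}$ and $\mathsf a={\mathsf a}_{A,H}$. Since $\hdet\cdot\hdet^{-1}=\hdet^{-1}\cdot\hdet=1$, the $G(K)$-grading gives $\mathsf j\mathsf a,\ \mathsf a\mathsf j\in A_1=R$. Using that $A_{\hdet^{-1}}=\mathsf j R=R\mathsf j$ and $A_{\hdet}=\mathsf a R=R\mathsf a$ are free of rank one on both sides, a direct computation identifies the bimodule products as principal two-sided ideals, $A_{\hdet^{-1}}A_{\hdet}=\mathsf j\mathsf a\,R=R\,\mathsf j\mathsf a$ and $A_{\hdet}A_{\hdet^{-1}}=\mathsf a\mathsf j\,R=R\,\mathsf a\mathsf j$; in particular $\mathsf j\mathsf a$ and $\mathsf a\mathsf j$ are normal elements of $R$ of the common degree $\deg\mathsf j+\deg\mathsf a$. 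Because $R$ is a connected graded domain, so that its graded units are scalars, and because each of these ideals has a one-dimensional lowest-degree component spanned by its generator, the desired conclusion ${\mathsf j}_{A,H}{\mathsf a}_{A,H}=_{\Bbbk^{\times}}{\mathsf a}_{A,H}{\mathsf j}_{A,H}$ is equivalent to the equality of the two ideals $\mathsf j\mathsf a\,R=\mathsf a\mathsf j\,R$.

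This last equality is the main obstacle, and it is a genuinely noncommutative point, automatic in the classical commutative case. The bimodule relation $\mathsf j r=\sigma(r)\mathsf j$ defining the twist $\sigma\in\Aut(R)$ of $A_{\hdet^{-1}}$ yields, on evaluating at $r=\mathsf a\mathsf j$, the identity $\mathsf j\mathsf a=\sigma(\mathsf a\mathsf j)$, so the task becomes showing that this distinguished normal element is a $\sigma$-eigenvector. The cleanest route I foresee is to pass to the graded quotient division ring $Q=Q_{\mathrm{gr}}(A)$, in which $\mathsf a$ is invertible and $\mathsf a\mathsf j=\mathsf a(\mathsf j\mathsf a)\mathsf a^{-1}$ exhibits $\mathsf a\mathsf j$ and $\mathsf j\mathsf a$ as conjugate; since $A_{\hdet}$ and $A_{\hdet^{-1}}$ are mutually inverse invertible $R$-bimodules up to the common degree shift $-(\deg\mathsf j+\deg\mathsf a)$, the evaluation and coevaluation of this duality should identify both $A_{\hdet^{-1}}A_{\hdet}$ and $A_{\hdet}A_{\hdet^{-1}}$ with the same intrinsic ideal of the extension $R\subseteq A$, namely its ramification (discriminant) ideal, which cannot depend on the order of multiplication. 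Establishing that both products equal this canonical ideal is the crux; granting it, normality together with connectedness upgrades equality of ideals to proportionality of the generators, completing (3).
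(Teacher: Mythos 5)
There is a genuine gap, concentrated in part (3) but also present in parts (1)--(2). For (1)--(2), your freeness argument (finite extension of the AS regular ring $R$, depth plus Auslander--Buchsbaum, direct summands of a graded free module are free) is in the spirit of what the paper invokes via \cite[Lemma 3.3(2)]{KKZ4}, and the rank-one statement has an easier proof you did not use: since $A$ is a domain, $A_{g^{-1}}A_g\subseteq R$ forces $\rk_R A_g\le 1$ (Lemma \ref{xxlem1.5}(5)). What your argument does not actually deliver is the \emph{nonvanishing} of $A_{\hdet^{\pm1}}$, i.e.\ that $\hdet$ lies in the subgroup $G_0=\{g: A_g\neq 0\}$. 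You derive it from the assertion that $A/R_{\ge1}A$ carries the regular representation of $H$; that is not proved (and is not a cited result at this generality), and it is essentially as hard as what it is meant to establish. The paper gets nonvanishing from the local cohomology identification in Theorem \ref{xxthm2.4}(2): $(\HB^d_{\fm^R}(R))^*\cong{^{\mu_R}R^1}(-\bfl_R)$ is the summand of $(\HB^d_{\fm}(A))^*={\mathfrak e}\otimes{}^\mu A^1$ cut out by $p_{\hdet^{-1}}$, which identifies it with ${\mathfrak e}\otimes A_{\hdet^{-1}}$ and so forces $A_{\hdet^{-1}}$ to be free of rank one; $A_{\hdet}\neq0$ then follows because $\hdet$ is a power of $\hdet^{-1}$ in the finite group $G(K)$.

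For (3), your reduction is correct up to the last step: ${\mathsf j}{\mathsf a}$ and ${\mathsf a}{\mathsf j}$ are normal elements of $R$ of the same degree, and proportionality is equivalent to equality of the principal ideals ${\mathsf j}{\mathsf a}R={\mathsf a}{\mathsf j}R$. But the step you flag as the crux is exactly the theorem, and neither of your two suggestions closes it: conjugacy ${\mathsf a}{\mathsf j}={\mathsf a}({\mathsf j}{\mathsf a}){\mathsf a}^{-1}$ in the graded quotient division ring never yields proportionality by itself, and the claim that both products coincide with an order-independent ``ramification ideal'' of $R\subseteq A$ is asserted, not proved. The paper's mechanism is Lemma \ref{xxlem3.7}: applying $\HH(-)$ to the composite $A\xrightarrow{p_1}A\xrightarrow{l_{f_g}}A\xrightarrow{p_g}A$ and using that $\HH(A)$ is an $H$-equivariant bimodule turns left multiplication by $f_g$ into right multiplication on $\HH(A)={\mathfrak e}\otimes{}^\mu A^1$, giving $f_{\hdet^{-1}}=_{\Bbbk^{\times}}f_{\hdet^{-1}g^{-1}}f_g$ for every $g\in G_0$. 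This simultaneously shows that every $f_g$ divides ${\mathsf j}_{A,H}$ on both sides and that ${\mathsf j}_{A,H}$ is normal in $A_G$; Lemma \ref{xxlem3.4}(2) (the computation of $A_{g'}\cap A_Gf_g$ versus $A_{g'}\cap f_gA_G$) then converts normality plus two-sided divisibility into ${\mathsf j}_{A,H}{\mathsf a}_{A,H}=_{\Bbbk^{\times}}{\mathsf a}_{A,H}{\mathsf j}_{A,H}$. Some such equivariant input is needed; the $G(K)$-grading and generic conjugacy alone do not suffice.
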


The above theorem allows us to define the following fundamental concepts.

\begin{definition}
\label{xxdef0.3}
Assume Hypotheses \ref{xxhyp0.1}.
\begin{enumerate}
\item[(1)]
The element ${\mathsf j}_{A,H}$ in Theorem \ref{xxthm0.2}(1) is called 
the {\it Jacobian} of the $H$-action on $A$.
\item[(2)] 
The element ${\mathsf a}_{A,H}$ in Theorem \ref{xxthm0.2}(2) is called 
the {\it reflection arrangement} of the $H$-action on $A$.
\item[(3)] 
The element ${\mathsf j}_{A,H}{\mathsf a}_{A,H}$, or equivalently,
${\mathsf a}_{A,H}{\mathsf j}_{A,H}$, 
in Theorem \ref{xxthm0.2}(3) is called the {\it discriminant} of the 
$H$-action on $A$, and denoted by $\delta_{A,H}$. 
\end{enumerate}
The above concepts are well-defined up to a nonzero scalar in $\Bbbk$, 
and under some hypotheses we show they exist more generally.
\end{definition}

In the  classical (commutative) setting, when $G$ is a reflection 
group acting on a vector space $V$ over the field of complex numbers 
${\mathbb C}$, the Jacobian (respectively, the reflection arrangement, 
the discriminant) in Definition \ref{xxdef0.3} is essentially equivalent 
to the classical Jacobian determinant of the basic invariants of $G$ in 
the commutative polynomial ring ${\mathbb C}[V^{\ast}]:={\mathcal O}(V)$
(respectively, the reflection arrangement, the discriminant of the 
$G$-action). When we let $A={\mathbb C}[V^{\ast}]$ and $H={\mathbb C} G$ 
in Hypotheses \ref{xxhyp0.1}, a well-known result of Steinberg \cite{Ste} 
states that 
\begin{equation}
\label{E0.3.1}\tag{E0.3.1}
{\mathsf j}_{A,H}=_{{\mathbb C}^{\times}}\prod_{s=1}^{v} f_s^{e_s-1}
\end{equation}
where $\{f_s\}_{s=1}^v$ is the complete list of the linear equations of the 
reflecting hyperplanes of $G$, and each $e_s$ is the exponent of the 
pointwise stabilizer subgroup that consists of pseudo-reflections 
in $G$ associated to the corresponding reflecting hyperplane. After 
we identify each hyperplane in $V$ with its linear form in 
$V^{\ast}$, the set of reflecting hyperplanes is uniquely 
determined by the following equation \cite[Examples 6.39 and 6.40]{OT}
(where $\det$ and $\det^{-1}$ are switched due to different convention 
used in the book \cite{OT})
\begin{equation}
\label{E0.3.2}\tag{E0.3.2}
{\mathsf a}_{A,H}=_{{\mathbb C}^{\times}}\prod_{s=1}^{v} f_s,
\end{equation}
which suggests calling ${\mathsf a}_{A,H}$ in Definition \ref{xxdef0.3} 
the reflection arrangement of the $H$-action on $A$. In this paper, we 
can prove only the following weaker version of Steinberg's theorem 
\cite{Ste, HS} in the noncommutative setting [Theorem \ref{xxthm0.5}].

\begin{hypothesis}
\label{xxhyp0.4} Assume the following hypotheses:
\begin{enumerate}
\item[(1)]
Assume Hypotheses \ref{xxhyp0.1}.
\item[(2)]
${\rm{char}}\; \Bbbk=0$.
\item[(3)]
$H$ is commutative, or equivalently, $H=(\Bbbk G)^{\ast}$ for 
some finite group $G$.
\item[(4)]
$A$ is generated in degree one.
\end{enumerate}
\end{hypothesis}

\begin{theorem}[Theorem \ref{xxthm2.12}(2)]
\label{xxthm0.5} 
Assume Hypotheses \ref{xxhyp0.4}. Then the following hold. 
\begin{enumerate}
\item[(1)] 
${\mathsf j}_{A,H}$ is a product of elements of degree one. 
\item[(2)] 
${\mathsf a}_{A,H}$ is a product of elements of degree one. 
\end{enumerate}
\end{theorem}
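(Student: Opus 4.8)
The plan is to use that, under Hypotheses \ref{xxhyp0.4}(2)--(3), the $H$-action is genuinely a grading by the finite group $G$. With $K=H^\ast=\Bbbk G$, the right $K$-coaction $\rho$ decomposes $A$ as $A=\boplus_{g\in G}A_g$ with the $A_g$ of \eqref{E0.1.1}; this is a bona fide $G$-grading, so $A_gA_h\subseteq A_{gh}$, a product of $G$-homogeneous elements is again $G$-homogeneous, and $R=A^H=A_e$ (with $e\in G$ the identity). First I would record, from Theorem \ref{xxthm0.2}(2), that $A_{\hdet}$ is free of rank one over $R$ on both sides, generated by ${\mathsf a}_{A,H}$; put $d:=\deg{\mathsf a}_{A,H}$. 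Since $R$ is connected graded ($R_0=\Bbbk$), writing $A_{\hdet}=R\,{\mathsf a}_{A,H}$ gives $(A_{\hdet})_n=R_{n-d}\,{\mathsf a}_{A,H}$, so $A_{\hdet}$ lives in degrees $\ge d$ and its bottom piece is one-dimensional: $(A_{\hdet})_d=\Bbbk\,{\mathsf a}_{A,H}$.

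Next I would bring in Hypotheses \ref{xxhyp0.4}(4): because $A$ is generated in degree one, the space $A_d$ is spanned by $d$-fold products $\ell_1\cdots\ell_d$ with $\ell_i\in A_1$, and we may take each $\ell_i$ to be $G$-homogeneous, say $\ell_i\in(A_1)_{h_i}$. As the grading is by the group $G$, such a product lies in $A_{h_1\cdots h_d}$ and so contributes to the bigraded piece $(A_d)_{\hdet}=(A_{\hdet})_d$ exactly when $h_1\cdots h_d=\hdet$. Hence $(A_{\hdet})_d$ is spanned by those products $\ell_1\cdots\ell_d$ of $G$-homogeneous degree-one elements whose $G$-weights multiply to $\hdet$.

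Combining the two observations, the one-dimensional space $\Bbbk\,{\mathsf a}_{A,H}=(A_{\hdet})_d$ is spanned by such products; since ${\mathsf a}_{A,H}\neq 0$, at least one of them is nonzero, and any nonzero vector in a one-dimensional space is a scalar multiple of ${\mathsf a}_{A,H}$. Thus ${\mathsf a}_{A,H}=_{\Bbbk^\times}\ell_1\cdots\ell_d$ is a product of degree-one elements, which is (2). Running the identical argument with $A_{\hdet^{-1}}$ and its rank-one generator ${\mathsf j}_{A,H}$ from Theorem \ref{xxthm0.2}(1) gives (1).

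The step that needs the most care---and that dictates the hypotheses---is keeping the product $\ell_1\cdots\ell_d$ inside the single rank-one component $A_{\hdet}$; this succeeds only because the decomposition is a grading by a \emph{group}, so products of homogeneous elements stay homogeneous. For noncommutative $H$ the coaction yields a $K$-coisotypic decomposition that is not multiplicative in this sense, a product of homogeneous degree-one elements need not land in one $A_g$, and the argument breaks---this is precisely why we assume $H=(\Bbbk G)^\ast$. Beyond this I expect no serious computation; the real limitation is qualitative: the counting argument exhibits \emph{some} degree-one factorization but controls neither the factors nor their multiplicities, so it yields only this weak form and not the sharp classical statements \eqref{E0.3.1}--\eqref{E0.3.2}, where the $\ell_i$ are the reflecting-hyperplane forms $f_s$ with prescribed exponents.
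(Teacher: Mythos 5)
Your argument is correct, and it takes a genuinely different route from the paper's. The paper proves Theorem \ref{xxthm0.5} (as Theorem \ref{xxthm2.12}(2)) by passing to the covariant algebra $A^{cov\; H}=A/(R_{\geq 1})$: by \cite[Theorem 3.5(5)]{KKZ4} this finite-dimensional algebra is a skew Hasse algebra generated by the degree-one elements $f_h$ with $h\in{\mathfrak G}$ (see \eqref{E2.11.1}), so every $f_g$ factors as $f_{h_1}\cdots f_{h_s}$ with $g=h_1\cdots h_s$ and $s=l_{\mathfrak G}(g)$; taking $g=\hdet^{\mp 1}$ gives the factorization. You instead argue directly from the bigrading: ${\mathsf a}_{A,H}$ is a homogeneous generator of the graded free rank-one $R$-module $A_{\hdet}$ over the connected graded ring $R$, so the bottom piece $(A_{\hdet})_d$ is one-dimensional, while generation of $A$ in degree one together with the multiplicativity of the $G$-grading (Lemma \ref{xxlem1.5}(2)) shows that this bottom piece is spanned by products of $G$-homogeneous linear forms; a nonzero such product must then be a scalar multiple of ${\mathsf a}_{A,H}$. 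Your route is more elementary and self-contained --- it needs only Theorem \ref{xxthm0.2} and the $G$-grading, not the covariant-algebra machinery of \cite{KKZ4} --- but it yields only the bare existence of a degree-one factorization, whereas the paper's Theorem \ref{xxthm2.12}(2) is sharper: there the factors are the distinguished elements $f_h$, $h\in{\mathfrak G}$, and their number is the ${\mathfrak G}$-length of $\hdet^{\mp 1}$. In fact your factorization upgrades to that form at little cost: each $G$-homogeneous $\ell_i\in (A_1)_{h_i}$ with $h_i\neq e$ is a scalar multiple of $f_{h_i}$ with $h_i\in{\mathfrak G}$, and no factor can have weight $e$, since $A$ is a domain and deleting such a factor would produce a nonzero element of $A_{\hdet}$ of degree $d-1$, contradicting the minimality of $d$. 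You are also right about where the hypotheses enter: the argument hinges on the decomposition being a grading by a group, which is exactly Hypotheses \ref{xxhyp0.4}(3).
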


When $A$ is noncommutative, it is usually not a unique factorization 
domain. Then the decompositions of ${\mathsf j}_{A,H}$ and
${\mathsf a}_{A,H}$ into products of linear forms in Theorem 
\ref{xxthm0.5}, formulas like \eqref{E0.3.1}-\eqref{E0.3.2}, are not 
unique, see Examples \ref{xxex2.2}(2) and \ref{xxex4.2}. Therefore 
it is difficult to imagine and define individual reflecting 
hyperplane at this point, though, in some special cases, there are 
natural candidates for such hyperplanes, see \eqref{E2.2.2}. 
We have some general results as follows.

\begin{theorem}[Theorem \ref{xxthm3.8}(2)]
\label{xxthm0.6} 
Assume Hypotheses \ref{xxhyp0.1}. Then ${\mathsf a}_{A,H}$ 
divides ${\mathsf j}_{A,H}$ from the left and the right. 
\end{theorem}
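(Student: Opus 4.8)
The plan is to reduce both divisibilities to a single surjectivity statement and then to a Hilbert series identity that is governed by the homological determinant. Write $\mathsf{j}=\mathsf{j}_{A,H}$ and $\mathsf{a}=\mathsf{a}_{A,H}$. Since $A$ is a domain, $\mathsf{a}$ divides $\mathsf{j}$ from the left (resp. from the right) if and only if $\mathsf{j}\in \mathsf{a}A$ (resp. $\mathsf{j}\in A\mathsf{a}$). The coaction $\rho$ is an algebra map, so $A_gA_{g'}\subseteq A_{gg'}$ and $A_e=R$; combining this with injectivity of left multiplication by the nonzero element $\mathsf{a}$, one checks that any $x\in A$ with $\mathsf{a}x\in A_{\hdet^{-1}}$ must satisfy $x\in A_{\hdet^{-2}}$ (apply $\rho$, write $\mathsf{a}x\otimes\hdet^{-1}=(\mathsf{a}\otimes\hdet)\rho(x)$, and cancel the injective operator $y\otimes k\mapsto \mathsf{a}y\otimes\hdet k$). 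Hence $\mathsf{a}A\cap A_{\hdet^{-1}}=\mathsf{a}\,A_{\hdet^{-2}}$, and since $A_{\hdet^{-1}}=R\mathsf{j}$ it suffices to prove that the multiplication map $\mathsf{a}\cdot\colon A_{\hdet^{-2}}\to A_{\hdet^{-1}}$ is surjective; the right-hand divisibility follows symmetrically from surjectivity of $\,\cdot\,\mathsf{a}\colon A_{\hdet^{-2}}\to A_{\hdet^{-1}}$, $b\mapsto b\mathsf{a}$.

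Next I would reduce surjectivity to a numerical identity. The map $\mathsf{a}\cdot$ is an injective homomorphism of graded right $R$-modules (injectivity because $A$ is a domain) raising degrees by $\deg\mathsf{a}$. Equivalently, passing to the graded quotient division ring $Q$ of the noetherian domain $A$ and extending $\rho$ (which is possible since $\hdet$ is an invertible grouplike, so $\rho(\mathsf{a}^{-1})=\mathsf{a}^{-1}\otimes\hdet^{-1}$), the element $q:=\mathsf{a}^{-1}\mathsf{j}\in Q$ satisfies $\rho(q)=q\otimes\hdet^{-2}$, so $q\in Q_{\hdet^{-2}}$, and the claim $\mathsf{j}\in\mathsf{a}A$ is exactly the assertion $q\in A_{\hdet^{-2}}$. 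Because an injective degree-$\deg\mathsf{a}$ map of finitely generated graded $R$-modules is onto precisely when the Hilbert series match after the shift, the whole theorem reduces to the single identity $h_{A_{\hdet^{-1}}}(t)=t^{\deg\mathsf{a}}\,h_{A_{\hdet^{-2}}}(t)$ for the grouplike components. (If the component $A_{\hdet^{-2}}$ happens to be free of rank one over $R$, with generator $\mathsf{j}_2$, this is just the degree additivity $\deg\mathsf{j}=\deg\mathsf{a}+\deg\mathsf{j}_2$.)

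To establish that identity I would use an equivariant form of the Artin--Schelter--Gorenstein functional equation. Since $A$ is Artin--Schelter regular and $R=A^H$ is again Artin--Schelter regular, Poincar\'e duality for $A$ refines along the grouplike decomposition, and the homological determinant is exactly the twist that appears: one obtains a symmetry relating $h_{A_g}$ to $h_{A_{g'}}$, with $g'$ determined by $\hdet$ and a shift by the Gorenstein parameter of $A$. Feeding in the corresponding functional equation for the Artin--Schelter regular ring $R$ should force the required identity and, in particular, the inequality $\deg\mathsf{j}\ge\deg\mathsf{a}$. I expect this last point to be the main obstacle: proving that multiplication by $\mathsf{a}$ exhausts all of $A_{\hdet^{-1}}$ with no surplus vanishing is the noncommutative shadow of the classical inequality $e_s-1\ge 1$ in \eqref{E0.3.1}, and making the equivariant duality precise enough to pin the twist down to $\hdet^{\pm 2}$ (rather than merely to some undetermined grouplike) is delicate. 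As an alternative to the global functional equation, one could localize at the height-one primes of $R$ over which $R\subseteq A$ ramifies, where the $H$-action degenerates to a cyclic, rank-one situation and surjectivity reduces to the elementary commutative computation; the difficulty then migrates to controlling reflexivity and comparing orders of vanishing at those primes.
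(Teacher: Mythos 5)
Your reduction is sound: the computation $\mathsf{a}A\cap A_{\hdet^{-1}}=\mathsf{a}A_{\hdet^{-2}}$ via the coaction and the domain hypothesis is correct, and since $A_{\hdet^{-1}}$ and $A_{\hdet^{-2}}$ are both free of rank one over $R$ (Lemma \ref{xxlem1.5}(6)), the theorem does reduce, exactly as you say, to the single degree identity $\deg {\mathsf j}_{A,H}=\deg {\mathsf a}_{A,H}+\deg f_{\hdet^{-2}}$ (together with its mirror image for right multiplication). The problem is that this identity \emph{is} the theorem, and your proposal stops at the point of asserting that an ``equivariant form of the Artin--Schelter--Gorenstein functional equation'' \emph{should} force it. You yourself flag that pinning the duality twist down to $\hdet^{\pm 2}$ rather than to some undetermined grouplike is the delicate point; that delicate point is the entire mathematical content, and it is not carried out. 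Neither the global functional equation for $h_A$ and $h_R$ nor the localization alternative you sketch is developed far enough to see why the twist is $\hdet$ and not something else, so as written the argument is circular at its core: surjectivity of multiplication by $\mathsf{a}$ is reduced to a numerical identity whose proof is deferred to an unproved duality statement.

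For comparison, the paper closes exactly this gap in Lemma \ref{xxlem3.7}. One applies the top local cohomology functor $\HH(-)$ to the isomorphism of graded right $R$-modules $l_{f_g}\colon R\to f_gR$ sitting inside $A$, and uses the $H$-equivariant structure of $\HH(A)^{*}\cong (\Bbbk{\mathfrak e})\otimes {}^{\mu}A^{1}$, where the generator ${\mathfrak e}$ carries precisely the character $\hdet$ (this is where the homological determinant enters, via \cite[Definition 3.3]{KKZ3}). This identifies $\HH(f_gR)$ with ${\mathfrak e}\otimes f_{\hdet^{-1}g^{-1}}R$, shows that the induced map is right multiplication by $f_g$, and concludes $f_{\hdet^{-1}g^{-1}}f_g=_{\Bbbk^{\times}}f_{\hdet^{-1}}$ for every $g\in G_0$; setting $g=\hdet$ gives both divisibilities at once. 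Your plan points toward this same duality, but to complete your proof you would have to prove the equivariant identification $\HH(A_g)\leftrightarrow A_{\hdet^{-1}g^{-1}}$ (or its Hilbert-series consequence), which is not a formal consequence of $A$ and $R$ being AS regular --- it requires tracking the $H$-action on the local cohomology as in \cite[Lemma 3.2(a)]{RRZ2}.
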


In the classical setting, when $H=\Bbbk G$, for $G$  a reflection group 
acting on a vector space $V$, then $\delta_{A,H}$ agrees with 
the classical definition of discriminant of the $G$-action
\cite[Definition 6.44]{OT}. When $R$ is central in $A$ and $H$ is a 
dual reflection group, then $\delta_{A,H}$ is closely related to 
the noncommutative discriminant $dis(A/R)$ studied in 
\cite{BZ, CPWZ1, CPWZ2}.

\begin{theorem}[Theorem \ref{xxthm3.10}(2)]
\label{xxthm0.7} 
Assume Hypotheses \ref{xxhyp0.4}. Suppose that $R:=A^H$ \eqref{E0.1.2} 
is central in $A$. Then $\delta_{A,H}$ and $dis(A/R)$ have the same 
prime radical.
\end{theorem}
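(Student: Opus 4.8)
The plan is to reduce the claim to a comparison of supports in the commutative ring $R$, to compute $dis(A/R)$ through the trace form attached to the $G$-grading coming from $H=(\Bbbk G)^\ast$, and then to match the ramification locus on the two sides using the Steinberg-type description of ${\mathsf a}_{A,H}$ from Theorem \ref{xxthm0.5}.

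First I would record that $R$ is commutative: being central in $A$ it commutes with itself, and as a connected graded Artin--Schelter regular domain generated in degree one it is a commutative polynomial ring, hence a UFD in which the prime radical of a principal ideal is cut out by its irreducible factors. Because $H=(\Bbbk G)^\ast$, the $H$-action is a $G$-grading $A=\bigoplus_{g\in G}A_g$ with $R=A_e$, and $A$ is a free $R$-module of rank $|G|$, each $A_g$ being free of rank one (for $A_{\hdet}$ and $A_{\hdet^{-1}}$ this is Theorem \ref{xxthm0.2}). Since both $\delta_{A,H}$ and $dis(A/R)$ lie in $R$, it suffices to prove $\sqrt{(\delta_{A,H})}=\sqrt{(dis(A/R))}$. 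Next I would compute $dis(A/R)$ from the regular trace $\tr\colon A\to R$ (trace of left multiplication, which is $R$-linear as $R$ is central): for homogeneous $a\in A_g$ with $g\neq e$ the operator $L_a$ carries $A_h$ into $A_{gh}\neq A_h$, so $\tr(a)=0$, while $\tr(a)=|G|\,a$ for $a\in R$. Hence the trace-form matrix is block-anti-diagonal, pairing $A_g$ with $A_{g^{-1}}$, and up to a nonzero scalar $dis(A/R)$ is the product over $g\in G$ of the determinant of the pairing $A_g\times A_{g^{-1}}\to R$. For $g=\hdet$ the generators are ${\mathsf a}_{A,H}$ and ${\mathsf j}_{A,H}$, so that factor is ${\mathsf j}_{A,H}{\mathsf a}_{A,H}=_{\Bbbk^\times}\delta_{A,H}$; thus $\delta_{A,H}$ is one factor of $dis(A/R)$, giving $\sqrt{(\delta_{A,H})}\subseteq\sqrt{(dis(A/R))}$ at once.

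The reverse inclusion is the crux, and I would obtain it by identifying both radicals with the ideal of the ramification locus of the central extension $R\subseteq A$. Localizing at a height-one prime $\fp$ of the regular ring $R$ produces a DVR $R_\fp$ and a module-finite order $A_\fp=A\otimes_R R_\fp$; by the standard theory of the discriminant of such central extensions (as in \cite{CPWZ1, CPWZ2, BZ}), $\fp$ divides $dis(A/R)$ precisely when $A_\fp/R_\fp$ is ramified, so $\sqrt{(dis(A/R))}$ is the radical ideal of the ramification locus. It then remains to show that $\fp$ divides $\delta_{A,H}$ whenever $A_\fp/R_\fp$ is ramified; equivalently, that ${\mathsf a}_{A,H}$ vanishes along every ramification divisor. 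This is exactly the Steinberg-type content of Theorem \ref{xxthm0.5}, which exhibits ${\mathsf a}_{A,H}$ as a product of the degree-one reflection forms, combined with Theorem \ref{xxthm0.6} that ${\mathsf a}_{A,H}$ divides ${\mathsf j}_{A,H}$: the homological determinant is multiplicative over the local pseudo-reflection contributions, so the grouplike $\hdet$ is ramified at every ramified $\fp$, whence $v_\fp(\delta_{A,H})=v_\fp({\mathsf j}_{A,H}{\mathsf a}_{A,H})>0$. Combining the two inclusions yields $\sqrt{(\delta_{A,H})}=\sqrt{(dis(A/R))}$.

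The hard part is this last step: showing that the single $\hdet$-component detects the whole ramification locus, i.e. that ${\mathsf a}_{A,H}$ meets every reflection prime. This is where the characteristic-zero and generated-in-degree-one hypotheses, and the Steinberg--Hartmann--Shepler machinery underlying Theorem \ref{xxthm0.5}, are indispensable; without them the reflection arrangement need not collect all reflection forms, and the two discriminants could in principle acquire different support.
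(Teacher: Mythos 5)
Your opening computation of $dis(A/R)$ via the regular trace is exactly what the paper does in Theorem \ref{xxthm3.10}(1): $tr$ kills each $A_g$ with $g\neq e$, the Gram matrix is a permutation of the blocks pairing $A_g$ with $A_{g^{-1}}$, and $dis(A/R)=_{\Bbbk^{\times}}\prod_{g}f_{g^{-1}}f_g$, with the $g=\hdet$ factor equal to $\delta_{A,H}$. But note that this divisibility $\delta_{A,H}\mid dis(A/R)$ gives $(dis(A/R))\subseteq(\delta_{A,H})$, i.e.\ $\sqrt{(dis(A/R))}\subseteq\sqrt{(\delta_{A,H})}$ --- the opposite of the inclusion you claim to get ``at once.'' The genuinely hard inclusion is $\sqrt{(\delta_{A,H})}\subseteq\sqrt{(dis(A/R))}$, which amounts to showing that \emph{every} factor $f_{g^{-1}}f_g$ of $dis(A/R)$ divides a power of $\delta_{A,H}$.

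For that step your argument has a real gap. You propose to identify $\sqrt{(dis(A/R))}$ with the ramification locus of $R\subseteq A$ and then to argue that ${\mathsf a}_{A,H}$ vanishes along every ramification divisor ``by the Steinberg-type content of Theorem \ref{xxthm0.5}'' together with multiplicativity of $\hdet$ over ``local pseudo-reflection contributions.'' Theorem \ref{xxthm0.5} asserts only that ${\mathsf j}_{A,H}$ and ${\mathsf a}_{A,H}$ are products of degree-one elements; the paper is explicit that these factorizations are not unique and that it does \emph{not} know how to attach individual reflecting hyperplanes (or branch divisors) to them, so there is no noncommutative Steinberg theorem available to conclude that ${\mathsf a}_{A,H}$ meets every ramified height-one prime. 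Likewise, the identification of $\sqrt{(dis(A/R))}$ with a ramification locus for a noncommutative order is not something the cited discriminant papers supply. The paper's actual route is elementary and avoids all of this: by Lemma \ref{xxlem2.11}(2) (the covariant algebra $A^{cov\,H}$ is Frobenius), every $f_g$ divides $f_m={\mathsf j}_{A,H}$ from the left and the right, whence $f_{g^{-1}}f_g$ divides $f_m^2$ in $A$ and, using centrality of $f_{g^{-1}}f_g\in R$, divides $f_m^{r}\in R$ with $r=|G|$; therefore $dis(A/R)=\prod_g f_{g^{-1}}f_g$ divides $(f_m^{r})^{r}$, which divides $\delta_{A,H}^{r^2}$. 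That divisibility chain is what closes the hard inclusion, and it is the ingredient your proposal is missing.
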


We refer to \cite{YZ, Zh} for the definition of Auslander regularity 
and \cite[Definition 0.1]{JZ} for the definition of Artin-Schelter 
Cohen-Macaulay used in the next theorem and its proof. By Theorem
\ref{xxthm2.4}, the Jacobian ${\mathsf j}_{A,H}$ can be defined in a more
general setting, which is used in the next theorem.

\begin{theorem}[Theorem \ref{xxthm3.9}]
\label{xxthm0.8} 
Assume Hypotheses \ref{xxhyp0.1}. Suppose $A^H$ is Auslander regular.
Then $A_G:=\bigoplus_{g\in G(K)} A_g$ is Artin-Schelter Gorenstein 
and 
$${\mathsf j}_{A,H}=_{\Bbbk^{\times}}{\mathsf j}_{A_G, (\Bbbk G)^{\ast}}.$$
\end{theorem}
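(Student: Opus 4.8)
The plan is to realize $A_G$ as the fixed subring of $A$ under a normal Hopf subalgebra on which the homological determinant is trivial, apply a Watanabe-type theorem to obtain AS Gorensteinness, and then identify the two Jacobians by matching homological determinants along the resulting tower $R\subseteq A_G\subseteq A$. Write $G:=G(K)$.

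First I would record the structure of $A_G$. Since the right $K$-coaction $\rho$ is an algebra map and $G$ is closed under multiplication, $A_gA_{g'}\subseteq A_{gg'}$ and $A_e=A^H=R$, so $A_G=\bigoplus_{g\in G}A_g$ is a $G$-graded subalgebra of $A$ with identity component $R$; being a subring of the domain $A$, it is again a noetherian domain. The Hopf subalgebra $\Bbbk G\subseteq K$ dualizes to a Hopf surjection $H\twoheadrightarrow(\Bbbk G)^{\ast}=:\bar H$ with normal kernel Hopf subalgebra $H'\subseteq H$, and I would check $A^{H'}=A_G$: each $A_g$ lies in $A^{H'}$ because every grouplike $g\in G$ restricts to $\epsilon$ on $H'$, while the residual $\bar H=(\Bbbk G)^{\ast}$-action on $A^{H'}$ is exactly the $G$-grading, whose degree-$g$ piece is $A_g$. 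In particular the $(\Bbbk G)^{\ast}$-action on $A_G$ has fixed ring $(A_G)^{(\Bbbk G)^{\ast}}=A_e=R=A^H$, which is AS regular and, by hypothesis, Auslander regular.

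Next I would prove $A_G$ is AS Gorenstein. The homological determinant is compatible with restriction to a Hopf subalgebra, so $\hdet_{H'}=\hdet_{A,H}|_{H'}$; since $\hdet_{A,H}$ is a grouplike of $K$, hence an element of $G$, and $H'$ is annihilated by every element of $G$, this restriction is $\epsilon$. Thus $H'$ is a semisimple Hopf algebra acting on the AS regular domain $A$ with trivial homological determinant, and the Hopf version of Watanabe's theorem \cite{JZ, KKZ3} yields that $A^{H'}=A_G$ is AS Gorenstein. Consequently the $\bar H=(\Bbbk G)^{\ast}$-action on the AS Gorenstein domain $A_G$, whose fixed ring $R$ is Auslander regular, falls within the more general framework of Theorem \ref{xxthm2.4}, so $\mathsf{j}_{A_G,(\Bbbk G)^{\ast}}$ is defined and generates $(A_G)_{\hdet_{A_G,\bar H}^{-1}}$ as a free $R$-module of rank one on both sides.

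Finally I would identify the two Jacobians. Both $\mathsf{j}_{A,H}$ and $\mathsf{j}_{A_G,(\Bbbk G)^{\ast}}$ generate rank-one free $R$-modules inside $A_G$, the former in degree $\hdet_{A,H}^{-1}$ by Theorem \ref{xxthm0.2}, the latter in degree $\hdet_{A_G,\bar H}^{-1}$; since a generator of a free rank-one module over the connected graded ring $R$ is unique up to a scalar in $\Bbbk^{\times}$, it suffices to show these degrees coincide, that is $\hdet_{A_G,\bar H}=\hdet_{A,H}$ in $G$. This is the transitivity of the homological determinant along the tower $R\subseteq A_G\subseteq A$: because the inner determinant $\hdet_{H'}$ is trivial, the outer determinant $\hdet_{A_G,\bar H}$ of the quotient action equals the character of $\bar H$ induced by $\hdet_{A,H}$ through $H\twoheadrightarrow\bar H$, and as both homological determinants are canonically elements of the same group $G$, this says precisely $\hdet_{A_G,\bar H}=\hdet_{A,H}$. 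Establishing this compatibility is the main obstacle: I expect to compare the top local cohomology modules $\HB^{d}_{\fm}(A)$ and $\HB^{d}_{\fm}(A_G)$ (with $d=\GKdim A=\GKdim A_G$) as $G$-graded objects and track the $G$-degree of the canonical generator through the module-finite extension $A_G\subseteq A$, using that $A$ is $H'$-free with $\hdet_{H'}=\epsilon$. Once $\hdet_{A_G,\bar H}=\hdet_{A,H}$ is in hand, the degree-$\hdet_{A,H}^{-1}$ components agree, both Jacobians generate the same $R$-module, and therefore $\mathsf{j}_{A,H}=_{\Bbbk^{\times}}\mathsf{j}_{A_G,(\Bbbk G)^{\ast}}$.
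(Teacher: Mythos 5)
There is a genuine gap, and it sits at the very first step of your plan. You propose to realize $A_G$ as $A^{H'}$ for a \emph{normal Hopf subalgebra} $H'\subseteq H$ fitting into an exact sequence $1\to H'\to H\to (\Bbbk G)^{\ast}\to 1$. While the inclusion $\Bbbk G(K)\subseteq K$ does dualize to a Hopf surjection $H\twoheadrightarrow (\Bbbk G)^{\ast}$, its ``kernel'' $H'=\{h\in H\mid g(h)=\epsilon(h)\ \forall g\in G\}$ is in general only a coideal subalgebra, not a Hopf subalgebra: this happens exactly when $\Bbbk G(K)$ is a normal Hopf subalgebra of $K$, and the paper states explicitly in Remark \ref{xxrem3.11}(1) (citing Masuoka) that this fails in general. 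Indeed, whether there exists \emph{any} Hopf subalgebra $H_0\subseteq H$ with $A^{H_0}=A_G$ is posed as the open Question \ref{xxque0.9}, which this very theorem is said to motivate. Without $H'$ being a Hopf subalgebra you cannot invoke the Hopf version of Watanabe's theorem for the ``$H'$-action,'' nor speak of its homological determinant, so both the Gorensteinness step and the $\hdet$-transitivity step collapse. A further symptom that something is off: your argument makes essentially no use of the Auslander regularity of $A^H$, which is a genuine hypothesis of the theorem; and you yourself flag the final local-cohomology comparison as ``the main obstacle'' without carrying it out.

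For contrast, the paper's proof avoids any appeal to a normal Hopf subalgebra. Gorensteinness of $A_G$ is obtained from the J{\o}rgensen--Zhang criterion \cite[Theorem 6.1]{JZ}: $A_G$ is AS Cohen--Macaulay because it is finitely generated free over the AS regular ring $R$, the Auslander regularity of $R$ is used to verify the remaining hypotheses of that theorem, and the Frobenius property of $A_G/(R_{\geq 1})$ [Lemma \ref{xxlem3.7}(10)] supplies the required functional equation $h_{A_G}(t)=\pm t^m h_{A_G}(t^{-1})$. The identification of the two Jacobians is then purely degree-theoretic: by Lemma \ref{xxlem3.7}(7,8), ${\mathsf j}_{A,H}=f_{\hdet^{-1}}$ is the unique element of largest degree among the generators $\{f_g\mid g\in G_0\}$ (since $f_{\hdet^{-1}}=_{\Bbbk^{\times}} f_h f_{h^{-1}\hdet^{-1}}$ in a domain forces strict degree inequalities), and the generators of $(A_G)_g$ coincide with those of $A_g$; hence the two Jacobians are the same element up to scalar, with no need to compare the two homological determinants directly. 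If you want to salvage your outline, you would first have to resolve Question \ref{xxque0.9} affirmatively, which is a separate and apparently difficult problem.
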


The theorem above leads to the following question.

\begin{question}
\label{xxque0.9} 
Assume Hypotheses \ref{xxhyp0.1}. Is there a Hopf subalgebra
$H_0\subseteq H$ such that $A^{H_0}=A_G$?
\end{question}

In the classical case, either the Jacobian ${\mathsf j}_{A,H}$
or the reflection arrangement ${\mathsf a}_{A,H}$ completely
determines the collection of reflecting hyperplanes via 
\eqref{E0.3.1} or \eqref{E0.3.2} respectively. In the 
noncommutative case, since $A$ is not a unique factorization 
domain, the decomposition such as \eqref{E0.3.1} (or \eqref{E0.3.2})
is not unique. Consequently, it is not clear how to define 
individual reflecting hyperplanes. We propose the following 
temporary definitions. For any homogeneous element $f\in A$, 
define the set of left (respectively, right) divisors 
of degree 1 of $f$ to be 
\begin{equation}
\label{E0.9.1}\tag{E0.9.1}
{\mathfrak R}^l(f):=\{ \Bbbk v\; \mid \;  v\in A_1, v f_v=f\;\; 
{\text{for some}}\;\; f_v\in A\}.
\end{equation}
and
\begin{equation}
\label{E0.9.2}\tag{E0.9.2}
{\mathfrak R}^r(f):=\{ \Bbbk v\; \mid \;  v\in A_1, f_v  v=f\;\; 
{\text{for some}}\;\; f_v\in A\}.
\end{equation}

Unfortunately, in general (when $H$ is neither commutative nor
cocommutative),
$${\mathfrak R}^l({\mathsf a}_{A,H})\neq 
{\mathfrak R}^r({\mathsf a}_{A,H}),$$
see \eqref{E4.2.6} and \eqref{E4.2.7}.

Some further results related to other invariants (e.g. the homological determinant,
pertinency, and the Nakayama automorphism) are stated as corollaries to
Theorem  \ref{xxthm2.4}. 

This paper is organized as follows. Section \ref{xxsec1} reviews
some basic material. We define and study the Jacobian and the 
reflection arrangement in Section \ref{xxsec2}. In Section \ref{xxsec3} 
we focus on the discriminant. 
In Section \ref{xxsec4}, we give some non-trivial examples with some 
details. 

\subsection*{Acknowledgments}
The authors thank Akira Masuoka and Dan Rogalski for useful conversations 
on the subject, thank Luigi Ferraro, Frank Moore and Robert Won for
sharing their unpublished versions of \cite{FKMW1, FKMW2} and 
thank the referee for his/her very careful reading and valuable comments.
J.J. Zhang was partially supported by the US National Science 
Foundation (No. DMS-1700825).

\section{Preliminaries}
\label{xxsec1}

In this section we recall some basic concepts and fix some notation 
that will be used throughout.

An algebra $A$ is called {\it connected graded} if
$$A=\Bbbk \oplus A_1\oplus A_2\oplus \cdots$$
and $1\in A_0$, $A_iA_j\subseteq A_{i+j}$ for all $i,j\in {\mathbb N}$.
We say $A$ is {\em locally finite} if $\dim_{\Bbbk} A_i<\infty$ for 
all $i$. The {\it Hilbert series} of $A$ is defined to be
$$h_A(t)=\sum_{i\in {\mathbb N}} (\dim_{\Bbbk} A_i)t^i.$$
The {\it Gelfand-Kirillov dimension} (or {\it GKdimension}) 
of a connected $\mathbb{N}$-graded, locally-finite
algebra $A$ is defined to be
$$
\GKdim (A)=\limsup_{n\to\infty} 
\frac{\log (\sum_{i=0}^{n} \dim_{\Bbbk} A_i)}{\log(n)},
$$
see \cite[Chapter 8]{MR}, \cite{KL}, or \cite[p.1594]{StZ}.

The algebras that replace the commutative polynomial rings
are the so-called Artin-Schelter regular algebras \cite{AS}. We recall the 
definition below.

\begin{definition}
\label{xxdef1.1}
A connected graded algebra $A$ is called {\it Artin-Schelter Gorenstein} 
(or {\it AS Gorenstein}, for short) if the following conditions hold:
\begin{enumerate}
\item[(a)]
$A$ has injective dimension $d<\infty$ on
the left and on the right,
\item[(b)]
$\Ext^i_A(_A\Bbbk,_AA)=\Ext^i_{A}(\Bbbk_A,A_A)=0$ for all
$i\neq d$, and
\item[(c)]
$\Ext^d_A(_A\Bbbk,_AA)\cong \Ext^d_{A}(\Bbbk_A,A_A)\cong \Bbbk(\bfl)$ for some
integer $\bfl$. Here $\bfl$ is called the {\it AS index} of $A$.
\end{enumerate}
If in addition,
\begin{enumerate}
\item[(d)]
$A$ has finite global dimension, and
\item[(e)]
$A$ has finite Gelfand-Kirillov dimension,
\end{enumerate}
then $A$ is called {\it Artin-Schelter regular} (or {\it AS
regular}, for short) of dimension $d$.
\end{definition}

Let $M$ be an $A$-bimodule, and let $\mu, \nu$ be algebra 
automorphisms of $A$. Then ${^\mu M^\nu}$ denotes the 
induced $A$-bimodule such that ${^\mu M^\nu}=M$ 
as a $\Bbbk$-space, and where
$$a * m * b=\mu(a)m\nu(b)$$
for all $a,b\in A$ and $m\in {^\mu M^\nu}(=M)$.
Let $1$ denote also the identity map of $A$. We use
${^\mu M}$ (respectively, ${M^\nu}$)
for ${^\mu M^1}$ (respectively, ${^1 M^\nu}$).

Let $A$ be a connected graded finite dimensional algebra. We say 
$A$ is a {\it Frobenius} algebra if there is a nondegenerate 
associative bilinear form 
$$\langle -,- \rangle: A\times A \to \Bbbk,$$
which is graded of degree $-\bfl$, or equivalently, there is 
an isomorphism $A^*\cong A(-\bfl)$ as graded left (or right) 
$A$-modules. There is a (classical) graded Nakayama automorphism 
$\mu\in \Aut(A)$ such that 
$\langle a,b \rangle=\langle \mu(b), a\rangle$
for all $a,b\in A$. Further, $A^*\cong {^\mu A^1}(-\bfl)$
as graded $A$-bimodules. A connected graded AS Gorenstein algebra 
of injective dimension zero is exactly a connected graded 
Frobenius algebra. The Nakayama automorphism can be defined 
for certain classes of infinite dimensional algebras; see the 
next definition.

\begin{definition}
\label{xxdef1.2}
Let $A$ be an algebra over $\Bbbk$, and let $A^e = A \otimes A^{op}$.
\begin{enumerate}
\item[(1)]
$A$ is called {\it skew Calabi-Yau} (or {\it skew CY}, for short) if
\begin{enumerate}
\item[(a)]
$A$ is homologically smooth, that is, $A$ has a projective resolution 
in the category $A^e$-Mod that has finite length and such that each 
term in the projective resolution is finitely generated, and
\item[(b)]
there is an integer $d$ and an algebra automorphism $\mu$ of $A$ 
such that
\begin{equation}
\label{E1.2.1}\tag{E1.2.1}
\Ext^i_{A^e}(A,A^e)=\begin{cases} 0 & i\neq d\\
{^1 A^\mu} & i=d,\end{cases}
\end{equation}
as $A$-bimodules, where $1$ denotes the identity map of $A$.
\end{enumerate}
\item[(2)]
If \eqref{E1.2.1} holds for some algebra automorphism $\mu$ 
of $A$, then $\mu$ is called the {\it Nakayama automorphism} 
of $A$, and is usually denoted by $\mu_A$. 
\item[(3)]
We call $A$ {\it Calabi-Yau} (or {\it CY}, for short) if 
$A$ is skew Calabi-Yau and $\mu_A$ is inner (or equivalently, 
$\mu_A$ can be chosen to be the identity map after changing 
the generator of the bimodule ${^1 A^\mu}$).
\end{enumerate}
\end{definition}

If $A$ is connected graded, the above definition should be made 
in the category of graded modules and \eqref{E1.2.1}
should be replaced by
\begin{equation}
\label{E1.2.2}\tag{E1.2.2}
\Ext^i_{A^e}(A,A^e)=\begin{cases} 0 & i\neq d\\
{^1 A^\mu}(\bfl) & i=d,\end{cases}
\end{equation}
where ${^1 A^\mu}(\bfl)$ is the shift of the graded $A$-bimodule 
${^1 A^\mu}$ by degree $\bfl$.

We will use local cohomology later.
Let $A$ be a locally finite ${\mathbb N}$-graded algebra and 
$\fm$ be the graded ideal $A_{\geq 1}$. Let $A$-$\GrMod$ denote 
the category of ${\mathbb Z}$-graded left $A$-modules. For 
each graded left $A$-module $M$, we define
$$\Gamma_{\fm}(M) =\{ x\in M\mid A_{\geq n} x=0 \; 
{\text{for some $n\geq 1$}}\;\}
=\lim_{n\to \infty} \Hom_A(A/A_{\geq n}, M)$$
and call this the {\it $\fm$-torsion submodule} of $M$. It is 
standard that the functor $\Gamma_{\fm}(-)$ is a left 
exact functor from $A$-$\GrMod$ to itself. Since this category 
has enough injectives, the $i$th right derived functors, denoted by
${\text{H}}^i_{\fm}$ or $R^i\Gamma_{\fm}$, are defined and called 
the {\it local cohomology functors}. Explicitly, one has 
$${\text{H}}^i_{\fm}(M)=R^i\Gamma_{\fm}(M)
:=\lim_{n\to \infty} \Ext^i_A(A/A_{\geq n}, M).$$ 
See \cite{AZ, VdB} for more details.

The Nakayama automorphism of a noetherian AS Gorenstein algebra can be 
recovered by using local cohomology \cite[Lemma 3.5]{RRZ2}:
\begin{equation}
\label{E1.2.3}\tag{E1.2.3}
R^d \Gamma_{\fm} (A)^*\cong {^\mu A^1}(-\bfl)
\end{equation}
where $\bfl$ is the AS index of $A$. 

The following notation will be used throughout.

\begin{notation}[$G, \{p_i\}, \{p_g\}, A_g$] 
\label{xxnot1.3} 
Let $H$ denote a semisimple Hopf algebra. Since $\Bbbk$ is 
algebraically closed, the Artin-Wedderburn Theorem implies that 
$H$ has a decomposition into a direct sum of matrix algebras
\begin{equation}
\label{E1.3.1}\tag{E1.3.1}
H=M_{r_1}(\Bbbk)\oplus M_{r_2}(\Bbbk)\oplus \cdots 
\oplus M_{r_{N-1}}(\Bbbk)\oplus M_{r_N}(\Bbbk)
\end{equation}
with 
\begin{equation}
\label{E1.3.2}\tag{E1.3.2}
1=r_1=\cdots =r_n < r_{n+1} \leq \cdots \leq 
r_i\leq r_{i+1} \leq \cdots \leq r_N.
\end{equation}
Each block $M_{r_i}(\Bbbk)$ corresponds to a simple left $H$-module, denoted by
$S_i$. Then $\{S_i\}_{i=1}^{N}$ is the complete list of simple left $H$-modules 
and $\dim_{\Bbbk} S_i=r_i$ for all $i$. The center of $H$ is a direct sum of 
$N$ copies of $\Bbbk$, each of which corresponds to a block $M_{r_i}(\Bbbk)$. 
Since $H$ is a Hopf algebra, $r_1=1$. Further we can assume that $M_{r_1}=
\Bbbk \inth$ where $\inth$ is the integral of $H$. Each copy of 
$M_{r_i}(\Bbbk)=\Bbbk$, for $i=1,\cdots, n$, gives rise to a central idempotent 
in $H$, which is denoted by $p_i$. Let $I_{com}$ be the ideal of $H$ generated 
by commutators $[a,b]:=ab-ba$ for all $a,b\in H$. Then 
\begin{equation}
\label{E1.3.3}\tag{E1.3.3}
I_{com}=M_{r_{n+1}}(\Bbbk)\oplus \cdots \oplus M_{r_{N-1}}(\Bbbk)
\oplus M_{r_N}(\Bbbk)
\quad {\text{and}}\quad H/I_{com}=\Bbbk^{\oplus n}.
\end{equation}
It is well-known that $I_{com}$ is a Hopf ideal, and consequently, 
$H_{ab}: =H/I_{com}$ is a commutative Hopf algebra. Since $\Bbbk$ 
is algebraically closed, $H_{ab}$ is the dual of a group algebra 
$\Bbbk G$. By \eqref{E1.3.3}, the order of $G$ is $n$. There is 
another way of interpreting $G$. Let $K$ be the dual Hopf algebra 
of $H$, and let $G(K)$ be the group of grouplike elements in $K$.
Then $G$ is naturally isomorphic to $G(K)$, and we can identify $G$ 
with $G(K)$. For every grouplike element $g\in G(K)$, the 
correspondence idempotent in $H_{ab}$ is denoted by $p_{g}$. Then 
the Hopf algebra structure of $H_{ab}$ is given in \cite[p.61]{KKZ4}. 
Let $e$ be the unit or identity element of the group $G$ (later, the 
identity in $G$ is also denoted by $1_{G}$ or $1$). Lifting the idempotent 
$p_{g}\in H_{ab}$ from $H_{ab}$ to the corresponding central idempotent 
in $H$, still denoted by $p_{g}$, we have, in $H$,
\begin{equation}
\label{E1.3.4}\tag{E1.3.4}
p_{g}p_{h}=\begin{cases} p_{g} & g=h,\\ 0& g\neq h, \end{cases}
\quad {\text{and}}\quad \sum_{g\in G} p_{g}\neq 1_{H}, \quad {\text{unless}}\;\; 
n=N,
\end{equation}
and
\begin{equation}
\label{E1.3.5}\tag{E1.3.5}
\Delta(p_{g})=\sum_{h\in G} p_{h}\otimes p_{h^{-1}g}+ X_g, \quad
{\text{and}}\quad
\epsilon(p_{g})=\begin{cases} 1_{\Bbbk} & g=e,\\ 0& g\neq e, \end{cases}
\end{equation}
where $X_g$ is in $I_{com}\otimes H+H\otimes I_{com}$. Since
$I_{com}$ is a Hopf ideal, we also have $\Delta(I_{com})
\subseteq I_{com}\otimes H+H\otimes I_{com}$. Note that $\{p_{g}\}_{g\in G}$
agrees with the idempotents $\{p_i\}_{i=1}^n$. By the duality between 
$H$ and $K$, the idempotent in $H$ corresponding to the integral of $H$ 
is $p_1$ where $1\in K$ is the identity element (or the unit element 
$1_G$ of the group $G$). In other words, $p_1=\inth$. Note that $p_1$ 
is also the first central idempotent corresponding to the 
decomposition \eqref{E1.3.1}.
\end{notation}

Let $A$ be a connected graded algebra and let $H$ be a semisimple 
Hopf algebra acting on $A$ homogeneously and inner-faithfully 
\cite[Definition 1.5]{CKWZ1} such that $A$ is an $H$-module algebra. 
For each idempotent $p_i$, where $i=1,\cdots, n,\cdots, N$, we write 
$A_{p_{i}}=p_i\cdot A$. Then there is a natural decomposition
\begin{equation}
\label{E1.3.6}\tag{E1.3.6}
A=\oplus_{i=1}^N A_{p_i}
\end{equation}
following from the fact $1_H=\sum_{i=1}^N p_i$. Each $p_i$, for 
each $i=1,\cdots,n$, equals $p_g$, for some $g\in G$, and we write 
$$A_{g}:=p_g\cdot A=\{a\in A\mid p_g \cdot a=a\}.$$ 
We recall a definition.

\begin{definition} \cite[Definition 3.2]{KKZ4}
\label{xxdef1.4}
Suppose $H$ acts homogeneously and inner-faithfully on a noetherian 
Artin-Schelter regular domain $A$ that is an $H$-module algebra such 
that the fixed subring $A^H$ \eqref{E0.1.2} is again Artin-Schelter 
regular. Then we say that $H$ acts on $A$ as a {\it reflection Hopf 
algebra} or {\it reflection quantum groups}. By abuse of language, 
sometimes we just say that $H$ is a {\it reflection Hopf algebra} without 
mentioning $A$. If, further, $\hdet^{-1}=\hdet$, then $H$ is called a
{\it true reflection Hopf algebra}.
\end{definition}

\begin{lemma} 
\label{xxlem1.5}
Retain the notation above, and consider $A$ as a $K$-comodule
algebra where $\rho: A \rightarrow A \otimes K$ is the right 
coaction. 
\begin{enumerate}
\item[(1)]
For each $g\in G$, $A_{g}=\{a\in A\mid \rho(a)= a\otimes g\}$.
\item[(2)]
$A_{g} A_{h}\subseteq A_{gh}$ for all $g,h\in G$.
\item[(3)]
Let $A_G$ be $\bigoplus_{g\in G} A_g$. Then $A_G$ is a subalgebra of $A$.
\item[(4)]
If $A$ is a domain, then $G_0:=\{g\in G\mid A_g\neq 0\}$ is a subgroup of $G$. 
\item[(5)]
Suppose $A$ is a domain and $A_g$ {\rm{(}}for some $g\in G${\rm{)}} 
is a nonzero free module over $A^H$ on the left and the right, then 
$A_g$ is a rank one free module over $A^H$ on the left and the right.
\item[(6)]
Assume Hypotheses \ref{xxhyp0.1}. Then each 
nonzero $A_g$ is a rank one free module over $A^H$ on the left and 
the right.
\end{enumerate}
\end{lemma}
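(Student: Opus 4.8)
I would prove (1) first, as the dictionary between the two descriptions of $A_g$, and then read off (2)--(4) as formal consequences; the genuine content lies in (5) and (6), which I would reduce to a graded-quotient-ring computation and an input on freeness of $A$ over $A^H$.

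For (1) the task is to reconcile the coaction description $\{a\mid \rho(a)=a\otimes g\}$ of \eqref{E0.1.1} with the idempotent description $A_g=p_g\cdot A$. The bridge is the action--coaction correspondence $h\cdot a=\sum a_{(0)}\langle h,a_{(1)}\rangle$ together with the duality $\langle p_h,g\rangle=\delta_{h,g}$ for grouplikes $g,h\in G(K)$. The latter holds because each grouplike $g\in K=H^{\ast}$ is an algebra map $H\to\Bbbk$, hence annihilates $I_{com}$, so its pairing with the lifted idempotents only sees the abelian quotient $H_{ab}$, where the $p_h$ are the dual basis to $G$. Given $\rho(a)=a\otimes g$ one gets $p_g\cdot a=a\langle p_g,g\rangle=a$. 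For the converse, if $p_g\cdot a=a$ then centrality and orthogonality of the block idempotents give $h\cdot a=(hp_g)\cdot a=\langle h,g\rangle\,a$ for \emph{every} $h\in H$, since the $H$-action on $a$ factors through the one-dimensional block $\Bbbk p_g$; comparing this with $h\cdot a=\sum a_{(0)}\langle h,a_{(1)}\rangle$ for all $h$ and invoking nondegeneracy of the $H$--$K$ pairing forces $\rho(a)=a\otimes g$.

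Parts (2)--(4) are then short. Since $\rho$ is an algebra homomorphism and $g,h$ are grouplike, $\rho(ab)=\rho(a)\rho(b)=ab\otimes gh$ for $a\in A_g$, $b\in A_h$, so $A_gA_h\subseteq A_{gh}$, and $\rho(1)=1\otimes e$ puts $1\in A_e$; this gives (2). Distinct grouplikes are linearly independent in $K$, so the eigenspaces $A_g$ are independent and their sum is direct, which together with closure under multiplication yields the subalgebra $A_G=\bigoplus_g A_g$ of (3). For (4), if $g,h\in G_0$ choose nonzero $a\in A_g$, $b\in A_h$; as $A$ is a domain, $0\neq ab\in A_{gh}$, so $gh\in G_0$; since $e\in G_0$ and $G$ is finite, each $g\in G_0$ has some power $g^m=e$, whence $g^{-1}=g^{m-1}\in G_0$, and $G_0$ is a subgroup.

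For (5) I would pass to fractions. As $A$ is a noetherian connected graded domain of finite GK-dimension it is a graded Ore domain, so its graded ring of fractions $Q:=Q_{\mathrm{gr}}(A)$ is a graded division ring, and $\rho$ extends to $Q$ (sending each nonzero homogeneous element to a grouplike-graded unit). Fix a nonzero homogeneous $a_0\in A_g$; it is invertible in $Q$ with $\rho(a_0^{-1})=a_0^{-1}\otimes g^{-1}$. For any $b\in A_g$ the element $c:=a_0^{-1}b$ satisfies $\rho(c)=c\otimes e$, i.e. $c\in Q^{\mathrm{co}\,K}=Q_{\mathrm{gr}}(A^H)=:F$; symmetrically $c':=ba_0^{-1}\in F$. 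Hence $A_g\subseteq a_0F$ and $A_g\subseteq Fa_0$, each a rank-one free one-sided $F$-module. Since $A_g$ is assumed free over $A^H$, a basis of $A_g$ stays one-sided $F$-independent in $Q$ (clear denominators and use that $A$ is a domain), so the right (resp. left) $A^H$-rank, computed as $\dim_F A_g\otimes_{A^H}F$, is at most one, hence exactly one because $A_g\neq 0$.

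Finally (6) reduces to (5): it suffices to show each nonzero $A_g$ is free over $A^H$ on each side. Under Hypotheses \ref{xxhyp0.1} the algebra $A$ is a free module over $R:=A^H$ on both sides. Because the $p_i$ are central in $H$ and $R=p_1\cdot A$ with $p_1=\inth$, the counit identity applied to $p_i\cdot(ra)=\sum((p_i)_1\cdot r)((p_i)_2\cdot a)$ (and its right analogue) gives $p_i\cdot(ra)=r(p_i\cdot a)$ and $p_i\cdot(ar)=(p_i\cdot a)r$ for $r\in R$, so $A=\bigoplus_i A_{p_i}$ is a decomposition of $R$-bimodules. Each $A_{p_g}=A_g$ is therefore a one-sided direct summand of the one-sided free $R$-module $A$, hence projective, hence free by graded Nakayama over the connected graded algebra $R$; part (5) then upgrades this to rank one on both sides. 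The two places I expect real work are the justification in (5) that $\rho$ extends to $Q$ with coinvariants equal to $Q_{\mathrm{gr}}(A^H)$, and in (6) the input that $A$ is $A^H$-free on each side; these are the main obstacles, the rest being formal.
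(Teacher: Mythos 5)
Your argument is correct in substance, and parts (1)--(4) and (6) run close to the paper's, but part (5) takes a genuinely different and heavier route. For (2) the paper computes with the lifted idempotents and $\Delta(p_{gh})$ modulo $I_{com}\otimes H+H\otimes I_{com}$, whereas you use that $\rho$ is an algebra map and $g,h$ are grouplike; these are equivalent, and yours is arguably cleaner once (1) is in place. The real divergence is (5): the paper's proof is one line --- pick $0\neq x\in A_{g^{-1}}$ (nonzero by (4)) and observe $xA_g\subseteq A_{g^{-1}}A_g\subseteq A^H$, so left multiplication by $x$ embeds $A_g$ into $A^H$ as a one-sided $A^H$-module and the rank is forced to be one. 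Your localization argument reaches the same conclusion but at the cost of two sub-claims you correctly flag as unproven (that $\rho$ extends to $Q_{\mathrm{gr}}(A)$ and that $Q^{\mathrm{co}\,K}=Q_{\mathrm{gr}}(A^H)$), and it quietly imports noetherianity/finite GK-dimension of $A$ to get the graded Ore condition, which the statement of (5) does not assume; the paper's multiplication trick avoids all of this. For (6) the paper simply cites \cite[Lemma 3.3(2)]{KKZ4} for freeness of each $A_g$ over $A^H$ on both sides and then invokes (5); your derivation (freeness of $A$ over $R$, the $R$-bimodule decomposition $A=\bigoplus_i A_{p_i}$ via $p_i\cdot(ra)=r(p_i\cdot a)$, direct summand of a free module, graded Nakayama) is a correct alternative, but it rests on the same nontrivial external input (freeness of $A$ over $A^H$ under Hypotheses \ref{xxhyp0.1}), so it does not actually reduce the dependence on \cite{KKZ4}.
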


\begin{proof} (1)
Let $\{h_1,\cdots, h_{\alpha}\}$ be a $\Bbbk$-linear basis of $H$ and 
$\{h_1^\ast,\cdots,h_{\alpha}^\ast\}$ be the dual basis of $H^\ast=:K$.
Then the element $\sum_{i} h_i \otimes h_i^\ast$ is independent of the 
choice of $\Bbbk$-linear bases $\{h_i\}_{i=1}^{\alpha}$. Since $A$ is a 
left $H$-module, then $A$ is a right $K$-comodule algebra with coaction 
given by
\begin{equation}
\label{E1.5.1}\tag{E1.5.1}
\rho(a)=\sum_{i=1}^{\alpha} (h_i\cdot a)\otimes h_i^\ast
\end{equation}
for all $a\in A$. 

We pick a nice basis consisting of matrix units that correspond 
to the matrix decomposition \eqref{E1.3.1}, making
$\{p_{g}\}_{g\in G}$  a part of the basis for $H$. Since $\Bbbk G(K)$ is 
the dual Hopf algebra of $H_{ab}=H/I_{com}$, then $g(I_{com})=0$ for each 
$g\in G(K)$. For every $h\in G=G(K)$, it is easy to see that 
$g(p_{h})=\delta_{gh}$. This implies that $\{g\}_{g\in G}$ is a 
part of the corresponding dual basis for $K$. Now the assertion 
follows from \eqref{E1.5.1} and a straightforward calculation.

(2) Let $x\in A_g$ and $y\in A_h$, then $I_{com} A_g=I_{com} A_h=0$ 
implies that $I_{com} x=I_{com}y=0$. By \eqref{E1.3.5},
$p_{gh} (xy)=p_{g}(x) p_{h}(y)=xy$. Thus $xy\in A_{gh}$.

(3) This follows from part (2). 

(4) This follows from part (2) and the fact that $A$ is a domain.

(5) Since $A$ is a domain, $xA_{g}\subseteq A^H$ for every nonzero 
$x\in A_{g^{-1}}$. Thus the rank of $A_g$ over $A^H$ is one.

(6) By \cite[Lemma 3.3(2)]{KKZ4} (where the hypothesis that the
${\rm{char}}\; \Bbbk$ is zero is not necessary), $A_g$ is free 
over $A^H$ on both sides. The assertion follows from part (5).
\end{proof}

\begin{notation}[$\{f_g\}, \phi_g$] 
\label{xxnot1.6}
Let $R$ denote the fixed subring $A^H$. Assume that $H$ is a reflection 
Hopf algebra acting on a noetherian Artin-Schelter regular domain $A$. 
By Lemma \ref{xxlem1.5}(6), each nonzero $A_g$ is of the form 
\begin{equation}
\label{E1.6.1}\tag{E1.6.1}
A_g=f_g R= R f_g
\end{equation} 
where $f_g\in A_g$ is a (fixed) nonzero homogeneous element of lowest 
degree. Note that $f_g$ is unique up to a nonzero scalar in $\Bbbk$.
There is a graded automorphism $\phi_g\in \Aut(R)$ such that
\begin{equation}
\label{E1.6.2}\tag{E1.6.2}
f_g x=\phi_g(x) f_g
\end{equation}
for all $x\in R$ \cite[(E3.5.1)]{KKZ4}. For every pair $(g,h)$ of
elements in $G$, define $c_{g,h}\in R$ such that
\begin{equation}
\label{E1.6.3}\tag{E1.6.3}
f_g f_h= c_{g,h} f_{gh}
\end{equation}
\cite[(E3.5.2)]{KKZ4}. Then $c_{g,h}$ is a normal element in $R$ 
and 
\begin{equation}
\label{E1.6.4}\tag{E1.6.4}
c_{g,h}=f_g f_h f_{fg}^{-1}, \quad
{\text{and}}\quad 
\phi^{-1}_{gh} (c_{g,h})=f_{fg}^{-1}f_g f_h.
\end{equation}
\end{notation}


\begin{lemma}
\label{xxlem1.7} 
Let $H$ be a semisimple Hopf algebra acting on an algebra $A$.
\begin{enumerate}
\item[(1)]
If $M$ is a simple left $H$-module and $N$ a 1-dimensional 
left $H$-module, then both $N\otimes M$ and $M\otimes N$ are 
simple left $H$-modules of dimension equal to $\dim_{\Bbbk} M$.
\item[(2)]
If $M\subseteq A$ is a simple left $H$-module and $0\neq b_g\in A_{g}$
where $A_g$ is defined as in Lemma \ref{xxlem1.5}(1), 
then both $M b_g$ and $b_g M$ (if nonzero) are  simple left $H$-modules
of dimension equal to $\dim_{\Bbbk} M$.
\end{enumerate}
\end{lemma}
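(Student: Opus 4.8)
The plan is to establish (1) directly from Hopf-algebraic bookkeeping and then obtain (2) by exhibiting $Mb_g$ and $b_gM$ as images of tensor products under the multiplication map.

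For (1), I would start from the fact that a $1$-dimensional left $H$-module $N$ is the same data as an algebra homomorphism $\chi\colon H\to\Bbbk$, that is, a grouplike element of $K=H^{\ast}$; I fix a basis vector $n_0$ with $h\cdot n_0=\chi(h)n_0$. Computing the action on $N\otimes M$ through $\Delta$ gives $h\cdot(n_0\otimes m)=n_0\otimes\big(\sum \chi(h_1)\,h_2\cdot m\big)$, so the linear isomorphism $N\otimes M\cong M$, $n_0\otimes m\mapsto m$, transports the $H$-action to the twisted action $h\ast m:=\Psi_\chi(h)\cdot m$, where $\Psi_\chi:=(\chi\otimes\id)\circ\Delta$. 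I would then verify that $\Psi_\chi$ is an algebra automorphism of $H$: it is an endomorphism because $\chi$ is multiplicative and $\Delta$ is an algebra map, and its inverse is $\Psi_{\chi\circ S}$, where $\chi\circ S$ is the convolution inverse of $\chi$ (so that $\Psi_{\chi\circ S}\circ\Psi_\chi=\Psi_\epsilon=\id$). Because $\Psi_\chi$ is surjective, a subspace of $M$ is stable under the original action if and only if it is stable under $\ast$; hence $N\otimes M\cong M^{\Psi_\chi}$ is simple, of dimension $\dim_\Bbbk M$. The case $M\otimes N$ is identical, using instead $(\id\otimes\chi)\circ\Delta$.

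For (2), I would first locate the $1$-dimensional module sitting inside $A_g$. Reading the $H$-action off the $K$-coaction \eqref{E1.5.1} and using $\rho(b_g)=b_g\otimes g$ gives $h\cdot b_g=g(h)\,b_g$, so $N:=\Bbbk b_g$ is a $1$-dimensional left $H$-submodule of $A$ with character $g$. The module-algebra axiom $h\cdot(ab)=\sum(h_1\cdot a)(h_2\cdot b)$ says precisely that multiplication $A\otimes A\to A$ is $H$-linear; restricting it yields $H$-linear surjections $M\otimes N\twoheadrightarrow Mb_g$ and $N\otimes M\twoheadrightarrow b_gM$. By part (1) the sources are simple, so each map is either zero or an isomorphism; thus whenever $Mb_g$ (respectively $b_gM$) is nonzero it is isomorphic to $M\otimes N$ (respectively $N\otimes M$), hence simple of dimension $\dim_\Bbbk M$.

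The only step with genuine content is the verification in (1) that a $1$-dimensional module is invertible under $\otimes$, equivalently that $\Psi_\chi$ is an automorphism; everything else is a direct computation with $\Delta$ and the module-algebra axiom. The one point to keep straight is the asymmetry of the two sides—$N\otimes M$ twists by $(\chi\otimes\id)\Delta$ whereas $M\otimes N$ twists by $(\id\otimes\chi)\Delta$—and, correspondingly, that it is $M\otimes N$ (not $N\otimes M$) that surjects onto $Mb_g$.
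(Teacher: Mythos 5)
Your proposal is correct and follows essentially the same route as the paper: part (1) rests on the invertibility of a one-dimensional module under $\otimes$ (the paper simply cites that $N\otimes-$ and $-\otimes N$ are auto-equivalences of $H$-Mod, while you prove this by hand via the winding automorphism $(\chi\otimes\id)\circ\Delta$ and its inverse $(\chi\circ S\otimes\id)\circ\Delta$), and part (2) uses, exactly as the paper does, that multiplication $A\otimes A\to A$ is $H$-linear together with the identifications $Mb_g\cong M\otimes\Bbbk b_g$ and $b_gM\cong \Bbbk b_g\otimes M$. Your explicit verification that the twist is an algebra automorphism, and your care with the left/right asymmetry, are both accurate.
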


\begin{proof}
(1) This follows from the fact that $N\otimes -$ and $-\otimes N$ 
are auto-equivalences of the category of left $H$-modules. 

(2) This follows from the fact that the multiplication map
$\mu: A\otimes A\to A$ is a left $H$-module map. Further, as left
$H$-modules, $b_g M\cong \Bbbk b_g\otimes M$ and $Mb_g\cong M\otimes \Bbbk b_g$
when $b_g M$ and $Mb_g$ are nonzero.
\end{proof}

Fixed an integer $d>0$. Let $\{S_{d,i}\}_{i=1}^{w_d}$ be the complete
list of simple left $H$-modules of dimension $d$. For each $g\in G(K)$,
there are permutations in the symmetric group, 
$\sigma_{g,d}, \tau_{g,d}\in {\mathbb S}_{w_d}$, such that
\begin{equation}
\label{E1.7.1}\tag{E1.7.1}
\Bbbk g\otimes S_{d,i}=S_{d,\sigma_{d,g}(i)}, \quad {\text{and}} \quad
\quad S_{d,i}\otimes \Bbbk g\cong S_{d,\tau_{d,g}(i)}.
\end{equation}
Let $\{p_{d,i}\}_{i=1}^{w_d}$ be the complete list of primitive central 
idempotents of $H$ corresponding to the set $\{S_{d,i}\}_{i=1}^{w_d}$, and 
let ${\mathcal A}_{d,i}=p_{d,i} A$. By Lemma \ref{xxlem1.7}(2), we have 
that 
\begin{equation}
\label{E1.7.2}\tag{E1.7.2}
b_g {\mathcal A}_{d,i}\subseteq {\mathcal A}_{d, \sigma_{d,g}(i)}, 
\quad {\text{and}} \quad 
{\mathcal A}_{d,i} b_{g}\subseteq {\mathcal A}_{d, \tau_{d,g}(i)}
\end{equation}
for all $g,i$.

For every $d$, define 
$${\mathcal A}_{d}:=\bigoplus_{i=1}^{w_d} {\mathcal A}_{d,i}.$$

Let $R$ be an Ore domain. If $M$ is a left $R$-module, the rank 
of $M$ over $R$ is defined to be
$$\rk M:= \dim_{Q} Q\otimes_R M$$
where $Q$ is the total quotient division ring of $R$.

\begin{lemma}
\label{xxlem1.8}
Suppose that $A$ is a domain. Let $\rk$ denote the rank over $A^H$.
Suppose that ${\mathcal A}_{d,1}\neq 0$ for some $d$.
\begin{enumerate}
\item[(1)]
$\rk {\mathcal A}_{d}\geq \rk {\mathcal A}_1$.
\item[(2)]
Suppose there are $(d',i')$ such that ${\mathcal A}_{d',i'}\neq 0$
and that $S_{d,1}\otimes S_{d',i'}$ is a direct sum of 
simple $H$-modules of dimensions $d_1,\cdots,d_s$.
Then 
$$\rk {\mathcal A}_{d,1}\leq \sum_{\alpha=1}^s 
\rk {\mathcal A}_{d_{\alpha}}.$$
\item[(3)]
Suppose there are $(d',i')$ such that ${\mathcal A}_{d',i'}\neq 0$
and that $(\bigoplus_{i=1}^{w_d}S_{d,j})\otimes S_{d',i'}$ is a 
direct sum of simple $H$-modules of dimensions $d_1,\cdots,d_s$.
Then 
$$\rk {\mathcal A}_{d}\leq \sum_{\alpha=1}^s 
\rk {\mathcal A}_{d_{\alpha}}.$$
\item[(4)]
Suppose that $S_{d,1}\otimes S_{d,i}$ is a direct sum of 
1-dimensional $H$-simples for some $i$ such that ${\mathcal A}_{d,i}\neq 0$. 
Then $\rk {\mathcal A}_{d,1}\leq \rk {\mathcal A}_1$.
\item[(5)]
If for any ${\mathcal A}_{d,i}\neq 0$, $S_{d,i}\otimes S_{d,1}$ 
is a direct sum of 1-dimensional $H$-simples,
then $\rk {\mathcal A}_{d}\leq \rk {\mathcal A}_1$.
\end{enumerate}
\end{lemma}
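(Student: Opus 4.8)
The plan is to deduce all five parts from two uniform observations, after noting that $A^H$ is an Ore domain (so that $Q\otimes_{A^H}-$ is exact, which makes the rank additive on direct sums, monotone under inclusion of submodules of $A$, and invariant under $A^H$-linear isomorphism; in the standing context $A$ is a graded noetherian domain, hence a finite $A^H$-module with $A^H$ noetherian). First, since $A$ is a domain, right multiplication by any fixed nonzero homogeneous $m$, $\theta_m\colon x\mapsto xm$, is injective, and it is left $A^H$-linear because $(rx)m=r(xm)$. Thus for any left $A^H$-submodule $M\subseteq A$ it is an isomorphism $M\xrightarrow{\ \sim\ }Mm$ of left $A^H$-modules, so $\rk M=\rk(Mm)$, and if $Mm\subseteq N$ for a left $A^H$-submodule $N$ then $\rk M\le\rk N$. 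Second, since $A$ is an $H$-module algebra the multiplication $A\otimes A\to A$ is $H$-linear (as already used in Lemma \ref{xxlem1.7}), so for $H$-submodules $M,N\subseteq A$ the product $MN$ is an $H$-submodule that is a quotient of $M\otimes N$; as $H$ is semisimple, every simple summand of $MN$ is a summand of $M\otimes N$. Applied to isotypic components $\mathcal{A}_{d,i}=p_{d,i}A\cong S_{d,i}^{\oplus(\cdot)}$, the simple summands of $\mathcal{A}_{d,i}\mathcal{A}_{d',i'}$ lie among those of $S_{d,i}\otimes S_{d',i'}$; hence if the simple summands of that tensor product have dimensions in a set $T$, then $\mathcal{A}_{d,i}\mathcal{A}_{d',i'}\subseteq\bigoplus_{\delta\in T}\mathcal{A}_\delta$.

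With these tools each part is a containment followed by a rank inequality. For (1), note $\mathcal{A}_1=\bigoplus_{g\in G}A_g=A_{G_0}$; choosing $0\ne m\in\mathcal{A}_{d,1}$ and using $A_g\mathcal{A}_{d,1}\subseteq\mathcal{A}_{d,\sigma_{d,g}(1)}\subseteq\mathcal{A}_d$ from \eqref{E1.7.2}, the map $\theta_m$ embeds $\mathcal{A}_1$ as a left $A^H$-module into $\mathcal{A}_d$, so $\rk\mathcal{A}_1\le\rk\mathcal{A}_d$. For (2), pick $0\ne m\in\mathcal{A}_{d',i'}$; then $\mathcal{A}_{d,1}m\subseteq\mathcal{A}_{d,1}\mathcal{A}_{d',i'}\subseteq\bigoplus_{\delta\in T}\mathcal{A}_\delta$ with $T=\{d_1,\dots,d_s\}$, whence $\rk\mathcal{A}_{d,1}\le\sum_{\delta\in T}\rk\mathcal{A}_\delta\le\sum_{\alpha=1}^s\rk\mathcal{A}_{d_\alpha}$, the last step merely discarding the over-counting of repeated dimensions. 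Part (3) is identical with $\mathcal{A}_{d,1}$ replaced by $\mathcal{A}_d=\bigoplus_j\mathcal{A}_{d,j}$, using that the summands of $(\bigoplus_j S_{d,j})\otimes S_{d',i'}$ are exactly the prescribed $d_1,\dots,d_s$.

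Parts (4) and (5) are the sharp specializations in which every relevant tensor product consists only of one-dimensional simples, so the product lands in $\mathcal{A}_1$ itself rather than in a multiplicity-inflated sum. For (4), with $0\ne m\in\mathcal{A}_{d,i}$ the hypothesis gives $\mathcal{A}_{d,1}m\subseteq\mathcal{A}_{d,1}\mathcal{A}_{d,i}\subseteq\mathcal{A}_1$, so $\rk\mathcal{A}_{d,1}\le\rk\mathcal{A}_1$. For (5), fix $0\ne m\in\mathcal{A}_{d,1}$ (available since $\mathcal{A}_{d,1}\ne0$); for each $i$ with $\mathcal{A}_{d,i}\ne0$ the hypothesis on $S_{d,i}\otimes S_{d,1}$ gives $\mathcal{A}_{d,i}m\subseteq\mathcal{A}_{d,i}\mathcal{A}_{d,1}\subseteq\mathcal{A}_1$, and summing over $i$ yields $\mathcal{A}_d m\subseteq\mathcal{A}_1$, hence $\rk\mathcal{A}_d\le\rk\mathcal{A}_1$.

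The genuine care-points are bookkeeping rather than deep. I must keep straight that $\theta_m$ is $A^H$-linear but not $H$-linear, so that rank is controlled by $\theta_m$ while the location of the image is controlled separately by the $H$-module structure through the tensor-product decompositions. I must also invoke the Ore property of $A^H$ to justify that rank is additive, monotone under inclusion, and preserved by $A^H$-linear isomorphisms. The one substantive distinction worth flagging is that (2)–(3) yield bounds weakened by the multiplicities of repeated dimensions, whereas (4)–(5) exploit the collapse of all one-dimensional constituents into the single module $\mathcal{A}_1$ to obtain the sharp bound $\rk\mathcal{A}_1$; recognizing that this collapse, and not a direct appeal to (2), is what gives the tight inequality is the only place where a naive reduction would fail.
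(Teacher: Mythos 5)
Your proof is correct and follows essentially the same route as the paper: fix a nonzero element, use that right multiplication by it is an injective left $A^H$-linear map (so it preserves rank over the Ore domain $A^H$), and use the $H$-linearity of multiplication to locate the image inside the appropriate isotypic components. Your explicit treatment of (4) and (5) — observing that the product collapses into the single module ${\mathcal A}_1$ rather than into an $s$-fold sum, which is what gives the sharp bound — is exactly the point the paper leaves implicit in its remark that these parts are ``consequences of (2) and (3).''
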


\begin{proof}
(1) Let $0\neq x\in {\mathcal A}_{d,1}$ such that $x$ is in a simple left 
$H$-module $M$. By Lemma \ref{xxlem1.7}(2), 
$${\mathcal A}_1 x \subseteq {\mathcal A}_1 M\subseteq {\mathcal A}_{d}.$$ 
Therefore
$$\rk {\mathcal A}_{d}\geq \rk {\mathcal A}_1 x=\rk {\mathcal A}_1.$$

(2) Let $0\neq x\in {\mathcal A}_{d',i'}$. By the ideas in the 
proof of Lemma \ref{xxlem1.7}(2), 
$${\mathcal A}_{d,1} x\subseteq \bigoplus_{\alpha=1}^s {\mathcal A}_{d_\alpha}.$$ 
Therefore
$$\rk {\mathcal A}_{d,1}= \rk {\mathcal A}_{d,1} x\leq \sum_{\alpha=1}^s 
\rk {\mathcal A}_{d_{\alpha}}.$$

(3) The proof is similar to the proof of part (2).

(4,5) These are consequences of parts (2) and (3).
\end{proof}

The above lemma has some consequences. For example, 
if $H$ has only one simple $S$ of dimension $d$ larger than 1
and $S\otimes S$ is a direct sum of 1-dimensional 
$H$-modules, then ${\mathcal A}_1$ and ${\mathcal A}_{d}$
have the same rank. When $|G| = d^2$ this implies that
$\dim_{\Bbbk} H=2 d^2$ \cite{Ar, AC}.

\begin{definition}
\label{xxdef1.9} 
Retain the notation as in Lemma \ref{xxlem1.5} and let
$G=G(K)$.
\begin{enumerate}
\item[(1)]
The subalgebra $A_{G}$ as defined in Lemma \ref{xxlem1.5}(3)
is called the {\it $G$-component} of $A$. 
\item[(2)]
The $\Bbbk$-vector space $A_{G^c}:= \bigoplus_{i= n+1}^N \; p_i \cdot A$ 
where $n$ and $N$ are defined in \eqref{E1.3.2} is called the 
{\it $G$-complement} of $A$. By Lemma \ref{xxlem1.7}(2),
$A_{G^c}$ is an $A_{G}$-bimodule and there is an $A_{G}$-bimodule
decomposition
$$A=A_{G}\oplus A_{G^c}.$$
\end{enumerate}
\end{definition}

An $A$-bimodule $M$ is called {\it $H$-equivariant} in the 
sense of \cite[Definition 2.2]{RRZ2} if 
$$h\cdot (a mb)=\sum (h_1\cdot a) (h_2 \cdot m) (h_3\cdot b)$$
for all $h\in H$, $a,b\in A$ and $m\in M$. The following
lemma is more or less proved in \cite{RRZ2}.

\begin{lemma}
\label{xxlem1.10}
Let $Y$ be an $H$-equivariant graded $A$-bimodule that is free 
of rank one over $A$ on both sides. Then $Y$ is isomorphic to 
${\mathfrak e} \otimes A$ such that 
\begin{enumerate}
\item[(1)]
$\Bbbk {\mathfrak e}$ is a 1-dimensional left $H$-module and there 
is an $g\in G(K)$ such that $h\cdot {\mathfrak e}\otimes 1=
g(h) {\mathfrak e}\otimes 1$,
\item[(2)]
${\mathfrak e}\otimes 1$ is a generator of the free right
$A$-module $Y$, namely, $({\mathfrak e}\otimes 1)a
={\mathfrak e}\otimes a$ for all $a\in A$,
\item[(3)]
there is a graded algebra automorphism $\mu$ of $A$ such that
$$a ({\mathfrak e}\otimes 1)={\mathfrak e} \otimes \mu(a)$$
for all $a\in A$,
\item[(4)]
$\mu(\Xi^r_{g}(h)\cdot a)=\Xi^l_{g}(h)\cdot \mu(a)$, where 
$\Xi^r_{\pi}$ is the right winding automorphism of $H$ associated 
to $g$, defined to be
\begin{equation}
\label{E1.10.1}\tag{E1.10.1}
\Xi^r_{g}: h \mapsto \sum h_1 g(h_2)
\end{equation} 
for all $h\in H$. 
\end{enumerate}
In this case, we write 
\begin{equation}
\label{E1.10.2}\tag{E1.10.2}
Y=(\Bbbk {\mathfrak e})\otimes {^\mu A^1}.
\end{equation}
When $Y$ is $R^d \Gamma_{\fm}(A)^\ast$ for an AS 
Gorenstein ring $A$, $\mu$ is the Nakayama automorphism of
$A$. 
\end{lemma}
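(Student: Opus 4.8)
The plan is to exploit the grading to produce a single homogeneous element $\mathfrak{e}$ that simultaneously generates $Y$ freely on the left and on the right, and then to read off all four assertions from the two ``straightening'' relations $(\mathfrak{e}\otimes 1)a=\mathfrak{e}\otimes a$ and $a\mathfrak{e}=\mathfrak{e}\mu(a)$. First I would locate $\mathfrak{e}$. Since $Y$ is graded and free of rank one as a right $A$-module, $Y\cong A(\ell)$ as graded right modules for some $\ell$, so the lowest nonzero homogeneous component $Y_{-\ell}$ is one-dimensional and any nonzero element of it is a free right generator. The same argument applied to the left module structure produces a free left generator in the bottom component; comparing the two bottom degrees forces both generators to lie in the single one-dimensional component $Y_{-\ell}$. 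Hence a nonzero $\mathfrak{e}\in Y_{-\ell}$ is at once a free generator on the left and on the right, and identifying $Y$ with $\mathfrak{e}\otimes A$ via $\mathfrak{e}a\leftrightarrow\mathfrak{e}\otimes a$ gives part (2).

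For (3), because $\mathfrak{e}$ is also a free left generator, for each $a\in A$ there is a unique $\mu(a)\in A$ with $a\mathfrak{e}=\mathfrak{e}\mu(a)$. Additivity is clear, $\mu(1)=1$, and from $(ab)\mathfrak{e}=a(\mathfrak{e}\mu(b))=\mathfrak{e}\mu(a)\mu(b)$ together with right-freeness one obtains $\mu(ab)=\mu(a)\mu(b)$; bijectivity follows since $\mathfrak{e}$ generates freely on both sides, and homogeneity from the fact that $\mathfrak{e}$ is homogeneous, so $\mu$ is degree-preserving. Thus $\mu\in\Aut(A)$, the left action becomes $b\cdot(\mathfrak{e}\otimes a)=\mathfrak{e}\otimes\mu(b)a$, and $Y\cong(\Bbbk\mathfrak{e})\otimes{}^{\mu}A^{1}$ as graded bimodules. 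For (1), since the $H$-action on $Y$ is homogeneous of degree zero it preserves the bottom component $Y_{-\ell}=\Bbbk\mathfrak{e}$; a one-dimensional left $H$-module is precisely an algebra map $H\to\Bbbk$, i.e. a grouplike element $g\in G(K)$, so that $h\cdot\mathfrak{e}=g(h)\,\mathfrak{e}$.

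The computational heart is (4), which I would obtain by evaluating $h\cdot(a\mathfrak{e})$ in two ways. Using $H$-equivariance in the form $h\cdot(am)=\sum (h_1\cdot a)(h_2\cdot m)$ together with $h\cdot\mathfrak{e}=g(h)\mathfrak{e}$ gives $h\cdot(a\mathfrak{e})=\sum (h_1\cdot a)\,g(h_2)\,\mathfrak{e}=\bigl(\sum h_1 g(h_2)\cdot a\bigr)\mathfrak{e}=\mathfrak{e}\,\mu\bigl(\Xi^r_g(h)\cdot a\bigr)$, since the scalars $g(h_2)$ may be absorbed into the remaining tensor factor and $\Xi^r_g(h)=\sum h_1 g(h_2)$. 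Applying equivariance instead to $a\mathfrak{e}=\mathfrak{e}\mu(a)$ in the form $h\cdot(mb)=\sum (h_1\cdot m)(h_2\cdot b)$ gives $h\cdot(\mathfrak{e}\mu(a))=\sum (h_1\cdot\mathfrak{e})(h_2\cdot\mu(a))=\mathfrak{e}\,\bigl(\sum g(h_1)h_2\cdot\mu(a)\bigr)=\mathfrak{e}\,\bigl(\Xi^l_g(h)\cdot\mu(a)\bigr)$, where $\Xi^l_g(h)=\sum g(h_1)h_2$ is the left winding automorphism. Cancelling the free right generator $\mathfrak{e}$ yields $\mu\bigl(\Xi^r_g(h)\cdot a\bigr)=\Xi^l_g(h)\cdot\mu(a)$, which is (4). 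Finally, when $Y=R^d\Gamma_{\fm}(A)^{\ast}$ with $A$ AS Gorenstein, comparing the bimodule description just obtained with the identification \eqref{E1.2.3}, namely $R^d\Gamma_{\fm}(A)^{\ast}\cong{}^{\mu}A^{1}(-\bfl)$, forces $\mu$ to be the Nakayama automorphism of $A$.

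I expect the only genuine subtlety to be the first step: arguing that one and the same homogeneous element serves as a free generator on both sides, for which the bottom-degree argument using the grading is essential. Everything downstream is then bookkeeping in Sweedler notation, and the winding automorphisms $\Xi^r_g$ and $\Xi^l_g$ appear precisely because the scalars $g(h_2)$, respectively $g(h_1)$, get absorbed into the surviving tensor factor of $\Delta(h)$. The identification with the Nakayama automorphism is then immediate from the cited formula once the bimodule type of $Y$ has been pinned down.
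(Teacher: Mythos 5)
Your proof is correct, and since the paper explicitly omits the argument ("The proof of the above lemma is easy and omitted"), your write-up supplies exactly the intended reasoning: the bottom-degree argument identifying a single homogeneous element that generates $Y$ freely on both sides, the straightening relation defining $\mu$, and the two Sweedler computations producing the winding automorphisms, with the Nakayama identification following from \eqref{E1.2.3}. No gaps.
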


The proof of the above lemma is easy and omitted. If we want to
specify the algebra $A$, \eqref{E1.10.2} can be written as
\begin{equation}
\label{E1.10.3}\tag{E1.10.3}
Y_{A}=(\Bbbk {\mathfrak e}_{A})\otimes {^\mu A^1}.
\end{equation}

\begin{definition}
\label{xxdef1.11}
Suppose a Hopf algebra $H$ acts inner-faithfully and homogeneously on 
a connected graded algebra $A$.  Let $R$ be $A^H$. 
\begin{enumerate}
\item[(1)]
The {\it left covariant module} of the $H$-action on $A$ is defined 
to be 
$$A^{l, cov\; H}:=A/A R_{\geq 1}$$
which is a left $A$ and right $R$-bimodule. 
\item[(2)]
The {\it right covariant module} of the $H$-action on $A$ is defined 
to be 
$$A^{r, cov\; H}:=A/ R_{\geq 1}A$$
which is a right $A$ and left $R$-bimodule. 
\item[(3)]
The {\it covariant algebra} of the $H$-action on $A$ is defined to be the 
factor ring
$$A^{cov\; H}: =A/(R_{\geq 1}).$$
\item[(4)]
We say the $H$-action on $A$ is {\it tepid} if 
$A R_{\geq 1}=R_{\geq 1}A$. In this case we say the covariant ring
$A^{cov\; H}$ is {\it tepid}.
\end{enumerate}
\end{definition}

There are reflection Hopf algebras $H$ such that the $H$-action
on $A$ is not tepid and the covariant ring $A^{cov \; H}$ is not
Frobenius, see Example \ref{xxex4.2}.

\section{The Jacobian and the Reflection Arrangement}
\label{xxsec2}

In this section we will introduce two important concepts for Hopf
algebra actions on Artin-Schelter regular algebras: the Jacobian and the
reflection arrangement. We also study the connection between the
Jacobian and the pertinency ideal. 

As in the previous sections, $H$ is a semisimple Hopf algebra. In 
this section we will use the homological determinant
\cite[Definition 3.3]{KKZ3} in a slightly more general situation.
Assume that $A$ is a noetherian connected graded AS Gorenstein algebra
(which is not necessarily regular). Let $\hdet$ denote both the 
homological determinant $\hdet: H\to \Bbbk$ and the corresponding 
grouplike element in $K$ (in \cite{KKZ3} it is called co-determinant).  
As usual, suppose that $H$ acts on $A$ homogeneously and inner-faithfully.  

To motivate our definition, we first briefly recall some facts in the 
commutative situation. Let $A$ be the commutative polynomial ring 
$\Bbbk[V^{\ast}] =\Bbbk[x_1,\cdots,x_n]$ and $G$ be a finite subgroup 
of ${\text{GL}}(V)$ acting on $A$ naturally. Suppose that $G$ is a 
reflection group and $R:=A^G$ is a polynomial ring, written as 
$\Bbbk[f_1,\cdots,f_n]$. Then the {\it Jacobian} $J$ (also called the 
{\it Jacobian determinant}) of the basic invariants $\{f_1,\cdots,f_n\}$ 
is defined to be 
$$J:=\det \; \left(\frac{\partial f_i}{\partial x_j}\right)_{i,j=1}^n,$$
see \cite[Introduction]{HS}.
It is well-known that $\deg J=-n+\sum_{i=1}^n \deg (f_i)$ and that 
$g\cdot J=(\det g)^{-1} J$ for all $g\in G$, see
\cite[p139]{Sta} or \cite[p.229]{OT}. In the commutative case, we 
have $\hdet=\det$. It is also well-known that $A_{\hdet^{-1}}$ 
is free over $R$ on both sides and the lowest degree of nonzero
elements in $A_{\hdet^{-1}}$ is $-n+\sum_{i=1}^n \deg (f_i)$.
Hence $A_{\hdet^{-1}}=J R=RJ$ \cite[p.139]{Sta}.

A result of Steinberg \cite{Ste, HS} says that the Jacobian determinant 
$J$ in the commutative case is a product of linear forms (with 
multiplicities) that correspond to the reflecting hyperplanes 
\eqref{E0.3.1}. The product of the distinct linear forms, denoted by 
${\mathsf a}$, corresponding to the reflecting hyperplanes, namely, 
the reduced defining equation of the Jacobian determinant \eqref{E0.3.2}, 
has the property that $g\cdot {\mathsf a} =\det(g) {\mathsf a}$ for 
all $g\in G$ and the degree of ${\mathsf a}$ is the lowest degree of 
nonzero elements in $A_{\hdet}$. This means that 
$A_{\hdet}={\mathsf a} R=R{\mathsf a}$, see \cite[Theorem 2.3]{Sta} 
and \cite[p.229]{OT}.

The following definition attempts to mimic these classical concepts 
in the noncommutative setting. See Definition \ref{xxdef0.3} under
Hypotheses \ref{xxhyp0.1}. 

\begin{definition}
\label{xxdef2.1} 
Let $A$ be AS Gorenstein, $\hdet\in K$ be the homological 
determinant of the $H$-action on $A$ and $R=A^H$.
\begin{enumerate}
\item[(1)]
If $A_{\hdet^{-1}}$ is free of rank one over $R$ on both sides,
namely, $A_{\hdet^{-1}}=f_{\hdet^{-1}} R=Rf_{\hdet^{-1}}\neq 0$, 
then the {\it Jacobian} of the $H$-action on $A$ is 
defined to be
$${\mathsf j}_{A,H}:=_{\Bbbk^\times} f_{\hdet^{-1}}\in A.$$
\item[(2)]
If $A_{\hdet}$ is free of rank one over $R$ on both sides and
$A_{\hdet}=f_{\hdet} R=Rf_{\hdet}\neq 0$,
the {\it reflection arrangement} of the $H$-action on $A$ is defined to be
$${\mathsf a}_{A,H}:=_{\Bbbk^\times} f_{\hdet}\in A.$$
\end{enumerate}
\end{definition}

In the above definition we do not assume that the fixed subring $A^H$ 
is Artin-Schelter regular. Next we give some easy examples; in (1) and 
(3) $A^H$ is not  AS regular, but the Jacobian and the reflection 
arrangement are still defined. 

\begin{example}
\label{xxex2.2}
\begin{enumerate}
\item[(1)]
If $\hdet$ is trivial, then both ${\mathsf j}_{A,H}$ and 
${\mathsf a}_{A,H}$ are $1\in A$.
\item[(2)] 
\cite[Example 3.7]{KKZ4}
In \cite{KKZ4} we assume that ${\rm{char}}\; \Bbbk=0$, but, in fact,
it suffices to assume that ${\rm{char}}\; \Bbbk \neq 2$ in this example.
Let $G$ be the dihedral group of order $8$. It is generated 
by $r$ of order $2$ and $\rho$ of order $4$ subject to the 
relation $r\rho =\rho^3 r$. Let $A$ be generated by $x, y, z$
subject to the relations
$$\begin{aligned}
zx &= -xz,\\
yx &= zy,\\
yz &= xy.
\end{aligned}
$$
Then $A$ is an AS regular algebra of global dimension $3$.
Let $H=(\Bbbk G)^{\ast}$ and define the $G$-degree of the 
generators of $A$ as
$$\deg_G(x) = r, \quad \deg_G(y) = r\rho, \quad \deg_G(z) = r\rho^2.$$
Then $\Bbbk G$ coacts on $A$. By \cite[Example 3.7]{KKZ4}, 
the Hopf algebra $H$ acts on $A$ as a (true) reflection Hopf 
algebra and the fixed subring $A^H$ is isomorphic to the polynomial
ring $\Bbbk[t_1,t_2,t_3]$, which is AS regular. 
(Note that $t_1=x^2, t_2=y^2, t_3=z^2$.) 
One can check that $\hdet=\hdet^{-1}=r \rho^3$ (so $H$ is a true 
reflection Hopf algebra) and that 
\begin{equation}
\label{E2.2.1}\tag{E2.2.1}
{\mathsf j}_{A,H}= {\mathsf a}_{A,H}=_{\Bbbk^{\times}} zxy
=_{\Bbbk^{\times}} zyz=_{\Bbbk^{\times}} xyx
=_{\Bbbk^{\times}} xzy=_{\Bbbk^{\times}} yzx=_{\Bbbk^{\times}} yxz,
\end{equation}
which is a product of elements of degree 1. By \cite[Theorem 3.5(2)]{KKZ4}, 
the covariant algebra $A^{cov\; H}$ is always tepid in this setting. 

Let us recall the notation introduced in \eqref{E0.9.1}-\eqref{E0.9.2}. For 
any homogeneous element $f\in A$, define the set of left 
(respectively, right) divisors of degree 1 of $f$ to be 
\begin{equation}
\notag 
{\mathfrak R}^l(f):=\{ \Bbbk v\; \mid \;  v\in A_1, v f_v=f\; 
{\text{for some}} f_v\in A\}.
\end{equation}
and
\begin{equation}
\notag 
{\mathfrak R}^r(f):=\{ \Bbbk v\; \mid \;  v\in A_1, f_v  v=f\; 
{\text{for some}} f_v\in A\}.
\end{equation}
It is clear that ${\mathfrak R}^l({\mathsf j}_{A,H})$
contains $\{\Bbbk x, \Bbbk y, \Bbbk z\}$. By using the fact that $y$ is normal,
one can show (with details omitted) that if $yxz=_{\Bbbk^{\times}}f_1 f_2 f_3$ 
for three elements $f_i$ of degree 1, then $f_1 f_2 f_3$ must be, up to scalars 
on $f_i$, one of the expressions given in \eqref{E2.2.1}. Therefore
\begin{equation}
\label{E2.2.2}\tag{E2.2.2}
{\mathfrak R}^l({\mathsf j}_{A,H})={\mathfrak R}^l({\mathsf a}_{A,H})
={\mathfrak R}^r({\mathsf j}_{A,H})={\mathfrak R}^r({\mathsf a}_{A,H})
=\{\Bbbk x, \Bbbk y, \Bbbk z\}.
\end{equation}
One might consider the set $\{\Bbbk x, \Bbbk y, \Bbbk z\}$ as (linear 
forms of) reflecting hyperplanes. 
\item[(3)]
Let $A$ be the down-up algebra 
$${\mathbb D}(0,1):=\frac{\Bbbk \langle u,d\rangle}{(u^2d-d u^2, ud^2-d^2u)}.$$
Then $A$ is noetherian, AS regular of global dimension 3.
Let $H$ be the Hopf algebra $(\Bbbk G)^\ast$ where $G$ is
the dihedral group of order 8 as in part (2). This is the setting in 
\cite[Example 2.1]{CKZ1}. By \cite[Example 2.1]{CKZ1}, we have 
$\hdet=\hdet^{-1}=\rho^2$. The fixed subring $A^H$ is not AS regular 
but is AS Gorenstein. By \cite[Lemma 2.2(3)]{CKZ1},
the Jacobian and the reflection arrangement of the $H$-action 
on $A$ are
$${\mathsf a}_{A,H}={\mathsf j}_{A,H}=_{\Bbbk^{\times}}
u^2\in A.$$
One can show directly that the covariant algebra $A^{cov\; H}$ is tepid. 
\end{enumerate}
\end{example}

\begin{remark}
\label{xxrem2.3}
The definition of the Jacobian in Definition \ref{xxdef2.1}(1) 
agrees with the Jacobian (determinant) when we consider classical 
reflection groups acting on commutative polynomial rings.
\begin{enumerate}
\item[(1)]
In the commutative case, both ${\mathsf j}_{A,H}$ and 
${\mathsf a}_{A,H}$ are products of linear forms 
\eqref{E0.3.1}-\eqref{E0.3.2}. It is natural to ask, if $A$ is 
generated in degree one, under what hypotheses, are both  
${\mathsf j}_{A,H}$ and ${\mathsf a}_{A,H}$ products
of elements of degree 1?
\item[(2)]
In the commutative case one sees from \eqref{E0.3.1}-\eqref{E0.3.2}
that ${\mathsf a}_{A,H}$ divides ${\mathsf j}_{A,H}$.
Is there a generalization of this statement in the noncommutative 
setting? We will discuss this question in Section \ref{xxsec3} 
(see Theorem \ref{xxthm3.8}(2)).
\item[(3)]
More importantly, the definitions of the Jacobian and the 
reflection arrangement suggest that we should search for a 
generalization of hyperplane arrangements in the 
noncommutative setting.
\item[(4)]
In the classical case, ${\mathsf a}_{A,H}$ is reduced, namely, 
every factor is squarefree in ${\mathsf a}_{A,H}$. What is the 
analogue of this statement? See Example \ref{xxex2.2}(2,3). 
\end{enumerate}
\end{remark}

Next we have a result concerning the existence of
${\mathsf j}_{A,H}$. Let $\pi: H\to \Bbbk$ be an 
algebra homomorphism, namely, $\pi\in K$ is a grouplike
element. Recall from \eqref{E1.10.1} that the right winding 
automorphism of $H$ associated to $\pi$ is defined to be
$$\Xi^r_{\pi}: h \mapsto \sum h_1 \pi(h_2)$$ 
for all $h\in H$. The left winding automorphism 
$\Xi^l_{\pi}$ of $H$ associated to $\pi$ is defined
similarly, and it is well-known that both $\Xi^r_{\pi}$
and $\Xi^l_{\pi}$ are algebra automorphisms of $H$.
For any element $x\in A$, let $\eta_x$ denote the ``conjugation'' 
map
$$\eta_x: a \to x^{-1}a x$$
whenever $x^{-1} a x$ is defined. In particular, this 
map could be defined for all $a$ in a subring of $A$. 
In the following result we do not assume that the $H$-action
on $A$ is inner-faithful.

Recall that
\begin{equation}
\label{E2.3.1}\tag{E2.3.1}
\sum \hdet(h_1)\hdet^{-1}(h_2)=\sum \hdet(h_2)\hdet^{-1}(h_1)=\epsilon(h)
\end{equation}
for every $h\in H$. 

\begin{theorem}
\label{xxthm2.4}
Let $A$ be a noetherian AS Gorenstein algebra. Let
$\hdet$ be the homological determinant of the $H$-action
on $A$. 
\begin{enumerate}
\item[(1)]
\cite[Lemma 3.10]{RRZ2}
Let $\mu$ be the Nakayama automorphism of $A$.
Then, for every $a\in A$ and $h\in H$,
\begin{equation}
\label{E2.4.1}\tag{E2.4.1}
\Xi^l_{\hdet}(h) \cdot \mu(a)=\mu( \Xi^r_{\hdet}(h)\cdot a).
\end{equation}
As a consequence, $\mu(A^H)=A^H$.
\item[(2)]
If $A^H$ is AS Gorenstein, then the Jacobian 
${\mathsf j}_{A,H}$ is defined and
\begin{enumerate}
\item[(a)]
${\mathfrak l}_{A^H}={\mathfrak l}+\deg {\mathsf j}_{A,H}$,
where ${\mathfrak l}$ indicates the respective AS indices 
{\rm{[}}Definition \ref{xxdef1.1}(c){\rm{]}},
\item[(b)]
$\mu_{A^H}=\eta_{{\mathsf j}_{A,H}} \circ \mu$.
\end{enumerate}
\item[(3)]
If ${\mathsf j}_{A,H}$ exists, then $A^H$ is AS Gorenstein.
\item[(4)]
$\mu(A_{\hdet^{-1}})=A_{\hdet^{-1}}$. As a consequence,
if ${\mathsf j}_{A,H}$ exists, then $\mu({\mathsf j}_{A,H})
=_{\Bbbk^{\times}} {\mathsf j}_{A,H}$.
\item[(5)]
$\mu(A_{\hdet})=A_{\hdet}$. As a consequence,
if ${\mathsf a}_{A,H}$ exists, then $\mu({\mathsf a}_{A,H})
=_{\Bbbk^{\times}} {\mathsf a}_{A,H}$.
\item[(6)]
Let $A$ be a domain. Suppose there is a short exact sequence 
of Hopf algebras
$$1\to H_0\to H\to \overline{H}\to 1$$
such that $A^{H}$ and $A^{H_0}$ are AS Gorenstein. Then 
$${\mathsf j}_{A,H}=_{\Bbbk^{\times}} {\mathsf j}_{A,H_0}
{\mathsf j}_{A^{H_0}, \overline{H}}
=_{\Bbbk^{\times}} {\mathsf j}_{A^{H_0}, \overline{H}}
{\mathsf j}_{A,H_0}.$$
\end{enumerate} 
\end{theorem}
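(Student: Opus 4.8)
The plan is to realize the factorization through the tower of fixed rings $A\supseteq B:=A^{H_0}\supseteq R:=A^H$ and to identify the product $j_1j_2$, where $j_1:={\mathsf j}_{A,H_0}$ and $j_2:={\mathsf j}_{A^{H_0},\overline H}$, as the $\hdet_H^{-1}$-isotypic generator of $A$. First I would record the setup: since the sequence $1\to H_0\to H\to\overline H\to1$ makes $H_0$ a normal Hopf subalgebra, $B$ is $H$-stable and $\overline H=H/H_0^+H$ acts on $B$ with $B^{\overline H}=R$; moreover $\overline H$ is again semisimple and $B$ is a noetherian AS Gorenstein domain. By Theorem \ref{xxthm2.4}(2) all three Jacobians exist, and by Notation \ref{xxnot1.6} we have the rank-one free modules $A_{\hdet_{H_0}^{-1}}=j_1B=Bj_1$, $B_{\hdet_{\overline H}^{-1}}=j_2R=Rj_2$, and $A_{\hdet_H^{-1}}={\mathsf j}_{A,H}R=R{\mathsf j}_{A,H}$. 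The degree formula of Theorem \ref{xxthm2.4}(2)(a), applied at each level and using $R=B^{\overline H}$, gives $\deg j_1+\deg j_2=({\mathfrak l}_B-{\mathfrak l}_A)+({\mathfrak l}_R-{\mathfrak l}_B)={\mathfrak l}_R-{\mathfrak l}_A=\deg{\mathsf j}_{A,H}=:d_0$, the bottom degree of $A_{\hdet_H^{-1}}$; as $A$ is a domain, $j_1j_2\neq0$ and $\deg(j_1j_2)=d_0$.

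Next I would pin $j_1j_2$ into a single grouplike eigenspace. The character $\hdet_{H_0}=\hdet_H|_{H_0}$ extends to the $H$-character $\hdet_H$, hence is invariant under the coadjoint $H$-action on $H_0$-characters, so the isotypic summand $A_{\hdet_{H_0}^{-1}}$ is $H$-stable. Its bottom-degree component is the one-dimensional space $\Bbbk j_1$ (since $B_0=\Bbbk$), which is therefore a one-dimensional $H$-module, yielding a grouplike $\lambda\in G(K)$ with $h\cdot j_1=\lambda(h)j_1$ and $\lambda|_{H_0}=\hdet_{H_0}^{-1}$. Because $H$ acts on $B$ through $\overline H$ and $j_2\in B_{\hdet_{\overline H}^{-1}}$, writing $\gamma:=\hdet_{\overline H}^{-1}\circ\pi\in G(K)$ gives $h\cdot j_2=\gamma(h)j_2$. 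The module-algebra axiom then yields $h\cdot(j_1j_2)=\sum(h_1\cdot j_1)(h_2\cdot j_2)=(\lambda\gamma)(h)\,j_1j_2$, so $j_1j_2\in A_{\lambda\gamma}$.

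The crux is the character identification $\lambda\gamma=\hdet_H^{-1}$. On $H_0$ the two sides agree, since $\gamma|_{H_0}=\epsilon$ and $\lambda|_{H_0}=\hdet_{H_0}^{-1}=\hdet_H^{-1}|_{H_0}$, so $\lambda\gamma\,\hdet_H$ factors through $\overline H$ and it remains to see this induced character is trivial. For this I would read the homological determinant off canonical modules: by Lemma \ref{xxlem1.10}, $\omega_A:=R^d\Gamma_{\fm}(A)^{\ast}\cong(\Bbbk{\mathfrak e}_A)\otimes{}^{\mu_A}A^1(-{\mathfrak l}_A)$, with $H$ acting on $\Bbbk{\mathfrak e}_A$ through $\hdet_H$ (in the convention of \cite{KKZ3}), and similarly $\overline H$ acts on $\Bbbk{\mathfrak e}_B$ through $\hdet_{\overline H}$. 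Comparing $\omega_A$ and $\omega_B$ through the rank-one $H$-equivariant $B$-bimodule $A_{\hdet_{H_0}^{-1}}=(\Bbbk j_1)\otimes{}^{\phi_1}B$ — which is exactly the comparison that proves Theorem \ref{xxthm2.4}(2)(b), in the style of \cite{RRZ2} — the residual $\overline H$-action on ${\mathfrak e}_B$ equals the $\lambda$-twist of $\hdet_H$ reduced modulo $H_0$, and matching it with $\hdet_{\overline H}$ forces $\hdet_H^{-1}=\lambda\gamma$ on $\overline H$ as well. This local-cohomology bookkeeping (tracking the one-dimensional factors and the winding twist by $\lambda$ through the tower of canonical modules) is the main obstacle; everything else is formal. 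Granting it, $0\neq j_1j_2\in A_{\hdet_H^{-1}}=R\,{\mathsf j}_{A,H}$ in degree $d_0$, whose degree-$d_0$ part is $R_0{\mathsf j}_{A,H}=\Bbbk{\mathsf j}_{A,H}$, so ${\mathsf j}_{A,H}=_{\Bbbk^{\times}}j_1j_2$.

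Finally I would deduce the reversed product from the first equality together with Theorem \ref{xxthm2.4}(4) applied at all three levels. From $\mu_A(j_1)=_{\Bbbk^{\times}}j_1$, $\mu_A({\mathsf j}_{A,H})=_{\Bbbk^{\times}}{\mathsf j}_{A,H}$, and ${\mathsf j}_{A,H}=_{\Bbbk^{\times}}j_1j_2$ one cancels $j_1$ (domain) to get $\mu_A(j_2)=_{\Bbbk^{\times}}j_2$; combining with $\mu_B(j_2)=_{\Bbbk^{\times}}j_2$ and $\mu_B=\eta_{j_1}\circ\mu_A|_B$ (Theorem \ref{xxthm2.4}(2)(b) for $H_0$) gives $j_1^{-1}j_2j_1=_{\Bbbk^{\times}}j_2$, i.e.\ $\phi_1^{-1}(j_2)=_{\Bbbk^{\times}}j_2$ in the notation of Notation \ref{xxnot1.6}. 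Hence $j_2j_1=j_1\phi_1^{-1}(j_2)=_{\Bbbk^{\times}}j_1j_2=_{\Bbbk^{\times}}{\mathsf j}_{A,H}$, completing both identities.
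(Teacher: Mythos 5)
Your proposal addresses only part (6), taking parts (1)--(5) as given; since the paper's proof of part (2) (the local cohomology decomposition identifying $(\mathrm{H}^d_{\fm^R}(R))^*$ with $(\Bbbk{\mathfrak e}_A)\otimes A_{\hdet^{-1}}$) and the Sweedler computations for parts (4)--(5) carry most of the weight of the theorem, what you have written covers only a fraction of the statement. For part (6) itself your outline is sound but routes through an unnecessary detour, and the one step you flag as ``the main obstacle'' and then grant is precisely where all the content sits. The paper's argument is a two-line transitivity of canonical-module generators: by \eqref{E2.4.5} one has ${\mathfrak e}_{A^H}={\mathfrak e}_A\otimes{\mathsf j}_{A,H}$ and also ${\mathfrak e}_{A^H}={\mathfrak e}_{A^{H_0}}\otimes{\mathsf j}_{A^{H_0},\overline H}=({\mathfrak e}_A\otimes{\mathsf j}_{A,H_0})\otimes{\mathsf j}_{A^{H_0},\overline H}={\mathfrak e}_A\otimes({\mathsf j}_{A,H_0}{\mathsf j}_{A^{H_0},\overline H})$, which gives the first equality immediately --- no degree count, no eigenspace bookkeeping, no character matching. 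Your degree computation and the observation that $j_1j_2$ lies in some $A_{\lambda\gamma}$ are correct but by themselves prove nothing: everything hinges on the identity $\lambda\gamma=\hdet^{-1}$, and your justification of it (the ``residual $\overline H$-action on ${\mathfrak e}_B$'') is exactly the comparison ${\mathfrak e}_B={\mathfrak e}_A\otimes j_1$ that the paper carries out, only asserted rather than proved. So the crux of your argument collapses onto the paper's, and in your write-up it remains a sketch. You also assert without proof that $A^{H_0}$ is an $H$-module algebra on which $H_0$ acts trivially, so that $\overline H$ acts with $(A^{H_0})^{\overline H}=A^H$; the paper verifies this using normality of $H_0$ via $\sum S(h_1)rh_2\in H_0$, and it should be written out.

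Your treatment of the reversed product does match the paper: apply part (4) to the pairs $(A,H)$ and $(A,H_0)$, cancel in the domain $A$ to get $\mu_A(j_2)=_{\Bbbk^\times}j_2$, then combine with part (4) for $(A^{H_0},\overline H)$ and part (2b) for $(A,H_0)$ to conclude $j_1^{-1}j_2j_1=_{\Bbbk^\times}j_2$, hence $j_2j_1=_{\Bbbk^\times}j_1j_2$. That portion is complete and correct.
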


\begin{proof} (1) Let $R$ denote $A^H$. 
The first claim is a special case of 
\cite[Lemma 3.10]{RRZ2} when the antipode $S$ of $H$ has 
the property that $S^2$ is the identity. 
(Note that since $H$ is semisimple, $S^2$ is the
identity.) For the consequence, we have, for $h\in H$ and $r\in R$, 

$$\begin{aligned}
h\cdot \mu(r)&=\sum \hdet(h_2) h_1\cdot \mu(r) \hdet^{-1}(h_3)\\
&=\sum \Xi^{r}_{\hdet}(h_1)\cdot \mu(r) \hdet^{-1}(h_2)\\
&=\mu(\sum \Xi^{l}_{\hdet}(h_1) \cdot r \hdet^{-1}(h_2))\\
&=\mu(\sum \hdet(h_1) h_2\cdot r \hdet^{-1}(h_3))\\
&=\mu(\sum \hdet(h_1) \epsilon(h_2) r \hdet^{-1}(h_3))\\
&=\epsilon(h) \mu(r).
\end{aligned}
$$
This implies that $\mu(r)\in R$, and completes the proof of part (1).
In the above computation we used \eqref{E2.3.1}.

We will use the notation introduced in \cite{KKZ3}.
Let ${\rm{H}}_{\fm}^i(A)$ be the $i$th local cohomology 
of $A$ with respect to the graded maximal ideal $\fm:=A_{\geq 1}$. 
Let $(-)^{\ast}$ denote the graded $\Bbbk$-linear dual of a 
graded vector space. Let $d$ be the injective dimension of $A$. 
By \cite[p.3648]{KKZ3} or \eqref{E1.2.3},
$$({\rm{H}}_{\fm}^i(A))^*=\begin{cases} 0 & i\neq d\\
{^\mu A^1}(-{\mathfrak l})=:Y & i=d,\end{cases}$$
and
$$({\rm{H}}_{\fm^{R}}^i(R))^*=\begin{cases} 0 & i\neq d\\
Y\cdot \inth =S(\inth) \cdot Y & i=d.\end{cases}$$
As a consequence, the injective dimension of $R$ is also
$d$ if $R$ is AS Gorenstein. Here $\mu$ is the Nakayama 
automorphism of $A$, and  $\mu_R$ is the Nakayama automorphism of $R$. 
Note that $Y$ has an $A$-bimodule structure with compatible 
$H$-action, or in other words, $Y$ is an $H$-equivariant 
$A$-bimodule in the sense of \cite[Definition 2.2]{RRZ2}, see 
\cite[Lemma 3.2(a)]{RRZ2}.

Using the notation in \cite[(3.2.1) and (3.2.2)]{KKZ3} or in Lemma 
\ref{xxlem1.10}, $Y= (\Bbbk {\mathfrak e})\otimes {^\mu A^1}$ as a 
left $H$-module (as well as graded $A$-bimodule) where 
$\deg ({\mathfrak e})={\mathfrak l}$ and the $H$-action 
on ${\mathfrak e}$ is given by
\begin{equation}
\label{E2.4.2}\tag{E2.4.2}
h\cdot {\mathfrak e}= \hdet(h) {\mathfrak e}
\end{equation}
by \cite[Definition 3.3]{KKZ3}. (In \cite{KKZ3},
the authors used the right $H$-action, one can easily transfer
to the left action by composing with the antipode $S$.)
By \cite[Lemma 2.4(1)]{KKZ3}, there is an 
$R$-bimodule decomposition 
\begin{equation}
\label{E2.4.3}\tag{E2.4.3}
A=R\oplus C
\end{equation}
where $R\subseteq A$ is a graded subalgebra. 
Further, as a left $H$-module, $R$ is a direct sum 
of trivial $H$-modules, and, 
$$R=\{a\in A\mid p_1\cdot a =a\};$$
and
$$C=\{a\in A\mid (1-p_1) \cdot a =a\},$$
where $p_1$ is the idempotent in \eqref{E1.3.1} corresponding to the
integral of $H$.
The decomposition \eqref{E2.4.3} gives rise to a
decomposition of $Y$, as $R$-bimodules,
\begin{equation}
\label{E2.4.4}\tag{E2.4.4}
Y=({\rm{H}}_{\fm}^d(A))^*=
({\rm{H}}_{\fm^R}^d(A))^*
=({\rm{H}}_{\fm^R}^d(R))^* \oplus ({\rm{H}}_{\fm^R}^d(C))^*
\end{equation}
where $({\rm{H}}_{\fm^R}^d(R))^*$ is preserved by the left action 
of $p_1$ and $({\rm{H}}_{\fm^R}^d(C))^*$ is preserved by the
left action of $1-p_1$. Using the fact, $Y=(\Bbbk {\mathfrak e})
\otimes {^\mu A^1}$, we can write 
$$({\rm{H}}_{\fm^R}^d(R))^*
=(\Bbbk {\mathfrak e})\otimes V, \quad {\text{and}} \quad  
({\rm{H}}_{\fm^R}^d(C))^*=(\Bbbk {\mathfrak e})\otimes W$$ 
for some graded $R$-bimodules $V,W$ with 
${^\mu A^1}=V\oplus W$. 

(2) Assume that $R$ is AS Gorenstein. Then the $R$-bimodule
$({\rm{H}}_{\fm^R}^d(R))^*$ is isomorphic to ${^{\mu_R} R^1}
(-{\mathfrak l}_R)$. In particular, 
$({\rm{H}}_{\fm^R}^d(R))^*$ is free of rank one on both sides. 
This implies that $V$ is a free $R$-module of rank one on both sides. 

Since $({\rm{H}}_{\fm^R}^d(R))^*$ is preserved by the left action 
of $p_1$ and $({\rm{H}}_{\fm^R}^d(C))^*$ is preserved by the
left action of $1-p_1$, by \eqref{E2.4.2},
$V$ is preserved by the left action 
of $p_{\hdet^{-1}}$ and $W$ is preserved by the
left action of $1-p_{\hdet^{-1}}$. Thus $V={^\mu A^1}_{\hdet^{-1}}
=A_{\hdet^{-1}}$ where the last equation follows from the fact
that the $H$-action on ${^\mu A^1}$ agrees with the $H$-action 
on $A$. Combining these assertions with ones in the last paragraph, 
we obtain that ${\mathsf j}_{A,H}$ exists.

For the two sub-statements, note that the right $R$-module 
$({\rm{H}}_{\fm^R}^d(R))^*$ is free with a generator
${\mathfrak e}\otimes {\mathsf j}_{A,H}$. Using the notation
introduced in \eqref{E1.10.3}, we have
\begin{equation}
\label{E2.4.5}\tag{E2.4.5}
{\mathfrak e}_{R}={\mathfrak e}\otimes {\mathsf j}_{A,H}={\mathfrak e}_A
\otimes {\mathsf j}_{A,H}.
\end{equation}
Then
$${\mathfrak l}_{R}=\deg {\mathfrak e}_{R}=
\deg ({\mathfrak e}_{A}\otimes {\mathsf j}_{A,H})
=\deg {\mathfrak e}_{A}+\deg {\mathsf j}_{A,H}={\mathfrak l}_{A}
+\deg {\mathsf j}_{A,H}.$$
Hence sub-statement (a) follows.
Considering elements inside $Y:={\mathfrak e}\otimes A$, 
for every $r\in R$, using part (1), we have
$$ \begin{aligned}
r ({\mathfrak e}_{R}\otimes 1)&= r ({\mathfrak e}\otimes {\mathsf j}_{A,H} 1)\\
&={\mathfrak e}\otimes \mu(r){\mathsf j}_{A,H}
={\mathfrak e}\otimes {\mathsf j}_{A,H} 
({\mathsf j}_{A,H}^{-1}\mu(r){\mathsf j}_{A,H})\\
&=({\mathfrak e}_R\otimes 1)
({\mathsf j}_{A,H}^{-1}\mu(r){\mathsf j}_{A,H})
\end{aligned}
$$
which implies that $\mu_{R}(r)=\eta_{{\mathsf j}_{A,H}}\circ 
\mu(r)$; hence we have verified sub-statement (b).

(3) The proof of the converse is similar. Since ${\mathsf j}_{A,H}$
is defined, $V:=A_{\hdet^{-1}}$ is a free $R$-module of rank one on 
both sides. Then $({\rm{H}}_{\fm^R}^d(R))^* 
=(\Bbbk {\mathfrak e})\otimes V$ is a free $R$-module of rank one on 
both sides. By \cite[Lemma 1.7(2)]{RRZ2}, 
$({\rm{H}}_{\fm^R}^d(R))^*$ is isomorphic to ${^{\mu_R} R^1}(-{\mathfrak l}_R)$
for some automorphism $\mu_R$ of $R$ and some integer ${\mathfrak l}$. 
By \cite[Lemma 1.6]{KKZ3}, $R$ is AS Gorenstein.

(4) For $r\in A_{\hdet^{-1}}$ and $h\in H$, we have 
$$\begin{aligned}
h\cdot \mu(r)&=\sum \hdet^{-1}(h_1) \hdet(h_2) h_3 \cdot \mu (r)\\
&=\sum \hdet^{-1}(h_1) \Xi^l_{\hdet}(h_2) \cdot \mu(r)\\
&=\sum \hdet^{-1}(h_1) \mu(\Xi^r_{\hdet}(h_2) \cdot r) 
          \qquad\qquad \qquad {\text{by \eqref{E2.4.1}}}\\
&=\sum \hdet^{-1}(h_1) \mu(\sum \hdet(h_3) h_2\cdot r)\\
&=\sum \hdet^{-1}(h_1) \mu(\sum \hdet(h_3) \hdet^{-1}(h_2) r)\\
&=\sum \hdet^{-1}(h_1) \mu(\sum \epsilon(h_2) r)\\
&=\hdet^{-1}(h) \mu(r).
\end{aligned}
$$
Hence the main assertion follows, and the consequence is clear.

(5) For $r\in A_{\hdet}$ and $h\in H$, we have 
$$\begin{aligned}
h\cdot \mu(r)&=\sum \hdet^{-1}(h_1) \hdet(h_2) h_3 \cdot \mu (r)\\
&=\sum \hdet^{-1}(h_1) \Xi^l_{\hdet}(h_2) \cdot \mu(r)\\
&=\sum \hdet^{-1}(h_1) \mu(\Xi^r_{\hdet}(h_2) \cdot r)
        \qquad\qquad \qquad {\text{by \eqref{E2.4.1}}}\\
&=\sum \hdet^{-1}(h_1) \mu(\sum \hdet(h_3) h_2\cdot r)\\
&=\sum \hdet^{-1}(h_1) \mu(\sum \hdet(h_3) \hdet(h_2) r)\\
&=\sum \hdet^{-1}(h_1) \hdet(h_2) \hdet(h_3) \mu(r)\\
&=\sum \epsilon(h_1) \hdet(h_2) \mu(r)= \hdet(h) \mu(r).
\end{aligned}
$$
Hence the main assertion follows, and the consequence is clear.

(6) Let $r\in H_0$ and $h\in H$. Since $H_0$ is normal, 
$\sum S(h_1) r h_2\in H_0$, and for all $x\in A^{H_0}$,
we have 
$$\sum S(h_1) r h_2 (x)=\epsilon(\sum S(h_1) r h_2) (x)
=\epsilon (r) \epsilon(h) x.$$
Then  
$$rh (x)=\sum h_1 S(h_2) r h_3 (x)=\sum h_1 \epsilon(h_2) \epsilon(r)(x)
=\epsilon(r) (h(x))$$
which implies that $A^{H_0}$ is a left $H$-module algebra.
By the definition, $H_0$-action on $A^{H_0}$ is trivial,
so $\overline{H}$ acts on $A^{H_0}$ naturally and 
$$A^H=(A^{H_0})^{H}=(A^{H_0})^{\overline{H}}.$$
By \eqref{E2.4.5},
$$\begin{aligned}
{\mathfrak e}_{A^H}&={\mathfrak e}_{A}\otimes {\mathsf j}_{A,H}, 
                      \qquad {\text{and}}\\
{\mathfrak e}_{A^H}&={\mathfrak e}_{A^{H_0}}
\otimes {\mathsf j}_{A^{H_0},\overline{H}} \\
&= ({\mathfrak e}_{A}\otimes {\mathsf j}_{A, H_0})
\otimes {\mathsf j}_{A^{H_0},\overline{H}} \\
&= ({\mathfrak e}_{A}\otimes {\mathsf j}_{A, H_0})
{\mathsf j}_{A^{H_0},\overline{H}} \\
&= {\mathfrak e}_{A}\otimes ( {\mathsf j}_{A, H_0}
{\mathsf j}_{A^{H_0},\overline{H}} )
\end{aligned}
$$
inside $Y$. Then 
\begin{equation}
\label{E2.4.6}\tag{E2.4.6}
{\mathsf j}_{A,H}={\mathsf j}_{A, H_0}
{\mathsf j}_{A^{H_0},\overline{H}}.
\end{equation} 
For the second equation,
we use part (4). Since both $A^H$ and $A^{H_0}$ are AS Gorenstein,
by part (4), we have
$\mu({\mathsf j}_{A,H})=_{\Bbbk^{\times}} {\mathsf j}_{A,H}$ and
$\mu({\mathsf j}_{A,H_0})=_{\Bbbk^{\times}} {\mathsf j}_{A,H_0}$.
Applying $\mu$ to the equation ${\mathsf j}_{A,H}={\mathsf j}_{A, H_0}
{\mathsf j}_{A^{H_0},\overline{H}}$, and using the hypothesis that
$A$ is a domain, we obtain that 
\begin{equation}
\label{E2.4.7}\tag{E2.4.7}
\mu({\mathsf j}_{A^{H_0},\overline{H}})
=_{\Bbbk^{\times}}{\mathsf j}_{A^{H_0},\overline{H}}.
\end{equation} 
Applying $\mu_{A^{H_0}}$ to ${\mathsf j}_{A^{H_0},\overline{H}}$ and use 
part (4), we have 
\begin{equation}
\label{E2.4.8}\tag{E2.4.8}
\mu_{A^{H_0}}({\mathsf j}_{A^{H_0},\overline{H}})
=_{\Bbbk^{\times}} {\mathsf j}_{A^{H_0},\overline{H}}.
\end{equation}
Combining \eqref{E2.4.7}, \eqref{E2.4.8} with part (2b), 
\begin{equation}
\notag 
{\mathsf j}_{A^{H_0},\overline{H}}
=_{\Bbbk^{\times}} \eta_{{\mathsf j}_{A,H}}({\mathsf j}_{A^{H_0},\overline{H}})
\end{equation}
or equivalently,
\begin{equation}
\label{E2.4.9}\tag{E2.4.9}
{\mathsf j}_{A^{H_0},\overline{H}}{\mathsf j}_{A,H}=_{\Bbbk^{\times}}
{\mathsf j}_{A,H}{\mathsf j}_{A^{H_0},\overline{H}}.
\end{equation}
Since $A$ is a domain, the combination of \eqref{E2.4.6} and \eqref{E2.4.9}
implies that 
\begin{equation}
\notag 
{\mathsf j}_{A^{H_0},\overline{H}}{\mathsf j}_{A,H_0}=_{\Bbbk^{\times}}
{\mathsf j}_{A,H_0}{\mathsf j}_{A^{H_0},\overline{H}}
\end{equation}
as desired.
\end{proof}

Theorem \ref{xxthm2.4}(6) is useful for the case when $H$ is obtained
by an abelian extension of Hopf algebras. We  wonder
if there is a version of Theorem \ref{xxthm2.4}(6) for ${\mathsf a}_{A,H}$.
Theorem \ref{xxthm2.4}(2b) is a generalization of 
\cite[Theorem 0.6(1)]{KKZ4}.  Though the Jacobian exists,
it is not clear if the reflection arrangement exists 
when $R$ is AS Gorenstein. We have three corollaries, including
the existence of the reflection arrangement when $R$ is AS regular.
The first of the corollaries is Theorem \ref{xxthm0.2}(1,2).

\begin{corollary}
\label{xxcor2.5}
Assume Hypotheses \ref{xxhyp0.1}. Let $R=A^H$ and 
$\xi(t)=h_A(t) (h_R(t))^{-1}$.
\begin{enumerate}
\item[(1)]
Both ${\mathsf j}_{A,H}$ and ${\mathsf a}_{A,H}$ exist.
\item[(2)]
$\deg \xi(t)= \deg {\mathsf j}_{A,H}$.
\item[(3)]
$h_{A^{l,cov\; H}}(t)=h_{A^{r,cov\; H}}(t)=\xi(t)$.
As a consequence,
$$\dim A^{l,cov\; H}=\dim A^{r,cov\; H}=\xi(1),$$ 
where $h_{A^{l,cov\; H}}(t)$ and $h_{A^{r,cov\; H}}(t)$ 
are the Hilbert series of $A^{l,cov\; H}$ and 
$A^{r,cov\; H}$.
\end{enumerate}
\end{corollary}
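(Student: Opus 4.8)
\emph{The plan.} Write $R=A^H$. The whole argument rests on the fact that, since $H$ is a reflection Hopf algebra, $R$ is AS regular, hence noetherian AS Gorenstein of the same injective dimension $d$ as $A$ (as recorded in the proof of Theorem~\ref{xxthm2.4}(2)). Part (1) then falls out of the tools already assembled: because $R$ is AS Gorenstein, Theorem~\ref{xxthm2.4}(2) guarantees that ${\mathsf j}_{A,H}$ is defined and that $A_{\hdet^{-1}}={\mathsf j}_{A,H}R=R{\mathsf j}_{A,H}\neq 0$. For the reflection arrangement I would argue that $A_{\hdet^{-1}}\neq 0$ places $\hdet^{-1}$ in the set $G_0:=\{g\in G\mid A_g\neq 0\}$, which is a subgroup of $G$ by Lemma~\ref{xxlem1.5}(4); therefore $\hdet=(\hdet^{-1})^{-1}\in G_0$, so $A_{\hdet}\neq 0$, and Lemma~\ref{xxlem1.5}(6) makes it free of rank one over $R$ on both sides. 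By Definition~\ref{xxdef2.1}(2), ${\mathsf a}_{A,H}$ exists, settling part (1).

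\emph{Freeness and part (3).} The structural input needed for parts (2) and (3) is that \emph{$A$ is free as a module over $R$ on both sides}. I would obtain this from the graded Auslander--Buchsbaum formula: $A$ is module-finite over $R$ (semisimplicity of $H$ forces integrality), and $A$ is a maximal Cohen--Macaulay $R$-module since $A$ and $R$ are AS Cohen--Macaulay of the same dimension $d$; hence $\pdim_R A=\depth R-\depth_R A=d-d=0$, so $A$ is projective and, being connected graded, free. Granting this, fix a graded right $R$-module isomorphism $A\cong\bigoplus_j R(-d_j)$; then $A^{l,cov\; H}=A/AR_{\geq 1}=A\otimes_R(R/R_{\geq 1})\cong\bigoplus_j\Bbbk(-d_j)$, so that $h_{A^{l,cov\; H}}(t)=\sum_j t^{d_j}=h_A(t)/h_R(t)=\xi(t)$. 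The symmetric computation with $A$ free on the left gives $h_{A^{r,cov\; H}}(t)=\xi(t)$, and evaluating at $t=1$ yields the dimension statement. This proves part (3); note in passing that $\xi(t)$ is now seen to be a polynomial with $\xi(0)=h_A(0)/h_R(0)=1$.

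\emph{Part (2).} Here I would play Theorem~\ref{xxthm2.4}(2a), which gives $\deg{\mathsf j}_{A,H}={\mathfrak l}_R-{\mathfrak l}_A$, against the Gorenstein functional equation $h_A(t^{-1})=(-1)^d t^{{\mathfrak l}_A}h_A(t)$. The latter follows by taking Hilbert series in the local-duality isomorphism \eqref{E1.2.3}, using that ${\HB}^i_{\fm}(A)$ is concentrated in the single degree $i=d$; the same equation holds for $R$ with ${\mathfrak l}_R$ since $R$ is AS Gorenstein of injective dimension $d$. Dividing the two equations gives $\xi(t^{-1})=t^{{\mathfrak l}_A-{\mathfrak l}_R}\xi(t)$. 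Since $\xi$ is a polynomial with $\xi(0)=1$ by part (3), comparing the highest powers of $t$ on the two sides forces $\deg\xi={\mathfrak l}_R-{\mathfrak l}_A=\deg{\mathsf j}_{A,H}$, as claimed.

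\emph{Expected obstacle.} The one genuinely substantive point is the freeness of $A$ over $R$; this freeness may alternatively be quoted from the reflection Hopf algebra literature, but it is where the real content sits. With it secured, part (3) is Hilbert-series bookkeeping and part (2) is a two-line comparison of the functional equation with Theorem~\ref{xxthm2.4}(2a), while existence in part (1) and the degree of ${\mathsf j}_{A,H}$ are read directly off Theorem~\ref{xxthm2.4} and Lemma~\ref{xxlem1.5}.
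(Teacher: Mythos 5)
Your proposal is correct and follows essentially the same route as the paper: existence of both elements via Theorem \ref{xxthm2.4} together with the observation that $A_{\hdet}\neq 0$ once $A_{\hdet^{-1}}\neq 0$ (the paper notes $\hdet$ is a power of $\hdet^{-1}$, you invoke the subgroup $G_0$ of Lemma \ref{xxlem1.5}(4) --- equivalent arguments), the degree count via Theorem \ref{xxthm2.4}(2a) combined with the symmetry of the Hilbert series of an AS Gorenstein algebra (the paper simply cites \cite[Proposition 3.1]{StZ} for $\deg h_A(t)^{-1}={\mathfrak l}_A$, whereas you derive the equivalent functional equation from local duality and compare leading terms), and part (3) from the bilateral freeness of $A$ over $R$, which the paper imports from \cite[Lemma 3.3(2)]{KKZ4} rather than rederiving via Auslander--Buchsbaum as you do. The only real difference is that you unpack from first principles several facts the paper handles by citation.
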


\begin{proof} (1) By Theorem \ref{xxthm2.4}, the Jacobian 
${\mathsf j}_{A,H}$ exists. In particular, $A_{\hdet^{-1}}\neq 0$.
Since $K$ is finite dimensional, $\hdet$ is a power of 
$\hdet^{-1}$. So $A_{\hdet}\neq 0$. By Lemma \ref{xxlem1.5}(6), $A_{\hdet}$ 
is a free $R$-module of rank one on both sides, and hence by definition,
${\mathsf a}_{A,H}$ exists.

(2) Let $p_A(t)=(h_A(t))^{-1}$ and $p_{R}(t)=(h_{R}(t))^{-1}$. 
By \cite[Proposition 3.1]{StZ}, $\deg p_A(t)={\mathfrak l}_A$
and $\deg p_R(t)={\mathfrak l}_R$. By Theorem \ref{xxthm2.4}(2a),
$$\deg {\mathsf j}_{A,H}={\mathfrak l}_R-{\mathfrak l}_A=\deg p_R(t)
-\deg p_A(t)=\deg \xi(t).$$

(3) Since $A_R$ is a finitely generated free $R$-module, 
$h_A(t)=h_{A^{l,cov\; H}}(t) h_R(t)$, and the assertion follows.
The consequence is clear.
\end{proof}

The next corollary is a rigidity result.

\begin{corollary}
\label{xxcor2.6}
Let $A$ be a noetherian AS Gorenstein algebra with finite GKdimension.
Suppose $H$ acts on $A$ such that $A^H$ is AS Gorenstein.
\begin{enumerate}
\item[(1)]
Suppose $A$ is Cohen-Macaulay. If $\hdet$ is not trivial,
then $\p(A,H)\leq 1$, where $\p(A,H)$ is defined in Definition \ref{xxdef2.8}(3).
\item[(2)]
Suppose that there is no graded ideal $I\subseteq A$ such that
$\GKdim A/I=\GKdim A-1$. Then $\hdet$ is trivial. 
\item[(3)]
If $A$ is projectively simple in the sense of \cite[Definition 1.1]{RRZ1}
and if $\GKdim A\geq 2$, then there is no graded ideal $I\subseteq A$ such that
$\GKdim A/I=\GKdim A-1$.
\end{enumerate}
\end{corollary}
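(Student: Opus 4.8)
The plan is to treat the three parts in the order (3), (2), (1): part (3) is essentially definitional, part (2) is the contrapositive of an explicit construction, and part (1) rests on that same construction together with the pertinency formalism. For (3) I would argue directly from the definition of a projectively simple algebra in \cite{RRZ1}, namely an infinite-dimensional connected graded algebra in which every nonzero two-sided ideal has finite $\Bbbk$-codimension. Thus for any nonzero proper graded ideal $I$ one has $\GKdim A/I=0$, while $I=0$ gives $\GKdim A/I=\GKdim A$ and $I=A$ gives the zero ring. Since $\GKdim A\geq 2$, the value $\GKdim A-1\geq 1$ is never attained, which is exactly the assertion.

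For (2) I would prove the contrapositive: assuming $\hdet$ is nontrivial, I would produce a graded ideal $I$ with $\GKdim A/I=\GKdim A-1$. Because $A^H$ is AS Gorenstein, Theorem \ref{xxthm2.4}(2) guarantees that the Jacobian ${\mathsf j}:={\mathsf j}_{A,H}$ exists, with $A_{\hdet^{-1}}={\mathsf j}\,R=R\,{\mathsf j}$ free of rank one over $R:=A^H$ on both sides and $\deg {\mathsf j}={\mathfrak l}_{A^H}-{\mathfrak l}_A$ by Theorem \ref{xxthm2.4}(2a). First I would check $\deg {\mathsf j}>0$: when $\hdet^{-1}$ is a nontrivial grouplike, $A_{\hdet^{-1}}$ meets $A$ only in positive degrees, since the degree-zero part $\Bbbk 1$ lies in $A_e=R$, so ${\mathsf j}\notin\Bbbk$. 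I would then take $I$ to be the two-sided ideal generated by ${\mathsf j}$ (equivalently by the bimodule $A_{\hdet^{-1}}$) and show $\GKdim A/I=\GKdim A-1$. The reduction I have in mind uses that $A$ is a finitely generated module over $R$ (semisimplicity of $H$), so that $\GKdim A=\GKdim R$ and GK dimensions of subquotients may be computed over $R$; combined with the relation $f_g x=\phi_g(x) f_g$ from Notation \ref{xxnot1.6}, which makes ${\mathsf j}$ normal over $R$, the quotient $A/I$ behaves like the quotient of a connected graded noetherian algebra by a normal element of positive degree, whose GK dimension drops by exactly one.

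For (1) I would invoke the pertinency $\p(A,H)$ of Definition \ref{xxdef2.8}(3) through the smash-product description $\p(A,H)=\GKdim(A\# H)-\GKdim\bigl((A\# H)/(A\# H)\mathbf{e}(A\# H)\bigr)$, where $\mathbf{e}=1\#\inth$ and $\GKdim A\# H=\GKdim A$ since $H$ is finite dimensional. Under the Cohen--Macaulay hypothesis I would identify $\GKdim\bigl((A\# H)/(A\# H)\mathbf{e}(A\# H)\bigr)$ with the GK dimension of the analogous quotient of $A$ cut out by the reflection data $A_{\hdet^{-1}}$, which should coincide with $\GKdim A/I$ for the ideal $I$ built in (2); the Cohen--Macaulay condition is what licenses replacing the smash-product quotient by the $A$-side quotient via local-cohomology duality. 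Granting this, $\p(A,H)=\GKdim A-\GKdim A/I$, and since $\hdet$ is nontrivial the estimate from (2) gives $\GKdim A/I\geq \GKdim A-1$, whence $\p(A,H)\leq 1$.

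The main obstacle I anticipate is the GK-dimensional bookkeeping shared by (1) and (2): showing that quotienting $A$ by the ideal generated by the Jacobian lowers GK dimension by exactly one, even though ${\mathsf j}$ is normal only over $R$ and not over $A$, and even though $A$ is not assumed to be a domain here so ${\mathsf j}$ need not be regular. I would address the non-normality over $A$ by passing to $R$, where everything is controlled by Notation \ref{xxnot1.6} and by module-finiteness, and I would handle possible zero divisors through a Hilbert-series or local-cohomology argument rather than relying on regularity of ${\mathsf j}$. Matching the $A$-side quotient with the smash-product quotient $(A\# H)/(A\# H)\mathbf{e}(A\# H)$ governing pertinency is the second delicate point, and it is precisely here that the Cohen--Macaulay hypothesis of part (1) enters.
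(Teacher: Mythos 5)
Your part (3) is fine and matches the paper's (essentially definitional) argument. The problems are in parts (2) and (1).

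In part (2) the gap is your choice of ideal. You take $I$ to be the two-sided ideal $A{\mathsf j}_{A,H}A$ generated by the Jacobian. Since ${\mathsf j}_{A,H}$ is only ``normal over $R$'' (${\mathsf j}_{A,H}R=R{\mathsf j}_{A,H}$) and not normal in $A$, the two-sided ideal $A{\mathsf j}_{A,H}A$ need not be contained in the left ideal $A{\mathsf j}_{A,H}$, so you only get the upper bound $\GKdim A/I\leq \GKdim A/A{\mathsf j}_{A,H}=\GKdim A-1$; nothing in your sketch forces the lower bound $\GKdim A/I\geq \GKdim A-1$, and without \emph{equality} you get no contradiction with the hypothesis of (2). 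The paper's fix is to set $M=A/A{\mathsf j}_{A,H}$, an $(A,R)$-bimodule finitely generated on both sides, and take $I=\ann_A({}_AM)$: this is automatically the largest two-sided ideal \emph{inside} $A{\mathsf j}_{A,H}$, which yields $\GKdim A/I\geq\GKdim M$, while writing $M=\sum_i m_iR$ and using $I=\bigcap_i\ann_A(m_i)$ yields $\GKdim A/I\leq\GKdim M$. Your instinct to ``pass to $R$ and use module-finiteness'' is the right raw material, but without the annihilator trick (or some substitute) the step fails.

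Part (1) as you propose it rests on an unproven claim. To identify $\GKdim\bigl((A\# H)/\mathcal P_{A,H}\bigr)$ with $\GKdim A/I$ for your Jacobian ideal you would need the radical ideal ${\mathfrak r}_{A,H}=\mathcal P_{A,H}\cap A$ to be contained in $A{\mathsf j}_{A,H}$ (or in some ideal whose quotient still has GK dimension $\geq\GKdim A-1$). The paper explicitly states this containment only as a belief in general (Remark \ref{xxrem2.13}(1)) and proves it only under extra hypotheses (rife actions, Lemma \ref{yylem3.13}; or $H=(\Bbbk G)^\ast$, Theorem \ref{xxthm2.12}(1)), none of which are assumed in Corollary \ref{xxcor2.6}. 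The paper's actual proof of (1) avoids the pertinency ideal entirely: from Theorem \ref{xxthm2.4}(2) one has an $R$-bimodule decomposition $A=R\oplus{\mathsf j}_{A,H}R\oplus C$ with $\deg{\mathsf j}_{A,H}>0$, so $\End_R(A)$ contains nonzero elements of negative degree and cannot be isomorphic, as a graded algebra, to the ${\mathbb N}$-graded $A\# H$; the Cohen--Macaulay hypothesis then enters solely through \cite[Theorem 3.5]{BHZ1}, which says that failure of the Auslander map to be an isomorphism forces $\p(A,H)\leq 1$. In particular your attribution of the Cohen--Macaulay hypothesis to a ``local-cohomology duality'' matching of the two quotients is not what is needed; indeed the identity $\p(A,H)=\GKdim A-\GKdim A/{\mathfrak r}_{A,H}$ holds without Cohen--Macaulayness, which signals that your route is not where that hypothesis does its work.
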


\begin{proof} (1) If $\hdet$ is not trivial, then there is an $R$-bimodule
$C$ such that 
$$A=A^H\oplus A_{\hdet} \oplus C=R\oplus {\mathsf j}_{A,H} R\oplus C$$
[Theorem \ref{xxthm2.4}(2)]. Then $\End_R (A)$ is not ${\mathbb N}$-graded.
Therefore the natural map $A \# H\to \End_R(A)$ cannot be an isomorphism
of a graded algebras. By \cite[Theorem 3.5]{BHZ1}, $\p(A,H)\leq 1$. 

(2) Suppose to the contrary that $\hdet$ is not
trivial. By Theorem \ref{xxthm2.4}, $f:={\mathsf j}_{A,H}
\in A_{\geq 1}$ exists. By definition, $fR=Rf$ inside $A$. Consider 
the $(A,R)$-bimodule $M:=A/Af$ which is finitely generated 
on both sides, we have $\GKdim (M)=\GKdim A-1$. Let $I=\ann_A(_AM)$.
Since $M_R$ is finitely generated, $M=\sum_{i=1}^s m_i R$. Then 
$I=\bigcap \ann_A( m_i)$. For each $i$, $\GKdim (A/\ann_A(m_i))
\leq \GKdim M$. Then $\GKdim A/I\leq \GKdim M$. Since
$I\subseteq Af$, $\GKdim A/I\geq \GKdim M$. Therefore 
$\GKdim A/I=\GKdim M=\GKdim A-1$, a contradiction.

(3) This is clear from the definition of a projectively simple ring
(also called a just-infinite ring).
\end{proof}

The third corollary puts some constraints on the homological determinant 
$\hdet$. Recall from \cite[p. 318]{RRZ2} that an AS Gorenstein algebra 
$A$ is called {\it $r$-Nakayama}, for some $r\in \Bbbk^{\times}$, the 
Nakayama automorphism of $A$ is of the form
\begin{equation}
\label{E2.6.1}\tag{E2.6.1}
\mu: a\to r^{\deg a} a
\end{equation}
for all homogeneous element $a\in A$. For example, every Calabi-Yau 
AS regular algebra is $1$-Nakayama.

\begin{corollary}
\label{xxcor2.7}
Let $A$ be a noetherian AS Gorenstein algebra that is $r$-Nakayama for some
$r\in \Bbbk^{\times}$. {\rm{(}}We need  only that $\Xi^r_{\hdet}(h)$ 
is a stable map of $\mu$-isotropy classes.{\rm{)}}
\begin{enumerate}
\item[(1)]
Assume that the $H$-action on $A$ is faithful. 
Then $\hdet$ is a central element in $G(K)$.
As a consequence, if the center of $G(K)$ is trivial,
then $\hdet$ is trivial and $A^H$ is AS Gorenstein.
\item[(2)]
Suppose that $A$ is an AS regular domain and that $G\neq \{1\}$ is a 
finite group with trivial center {\rm{(}}e.g. $G$ is non-abelian 
simple{\rm{)}}. 
If $H:=(\Bbbk G)^{\ast}$ acts on $A$ inner-faithfully and 
homogeneously such that $A$ is an $H$-module algebra, then 
$\hdet$ is trivial and $H$ is not a reflection Hopf algebra 
in the sense of Definition \ref{xxdef1.4}.
\end{enumerate}
\end{corollary}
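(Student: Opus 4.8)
The plan is to run everything off the compatibility
$\Xi^l_{\hdet}(h)\cdot\mu(a)=\mu(\Xi^r_{\hdet}(h)\cdot a)$ from Theorem \ref{xxthm2.4}(1). The role of the $r$-Nakayama hypothesis (or the weaker stability condition noted in parentheses) is to make $\mu$ act by a single scalar on each $\mu$-isotropy class while the map $a\mapsto\Xi^r_{\hdet}(h)\cdot a$ preserves those classes: under $r$-Nakayama $\mu(a)=r^{\deg a}a$, so each class is a sum of graded pieces, and homogeneity of the $H$-action keeps $\Xi^r_{\hdet}(h)\cdot(-)$ degree-preserving. Thus for $a$ in a class with $\mu(a)=\lambda a$ the left-hand side is $\lambda\,\Xi^l_{\hdet}(h)\cdot a$ and the right-hand side is $\lambda\,\Xi^r_{\hdet}(h)\cdot a$; cancelling $\lambda\in\Bbbk^\times$ gives
$$\Xi^l_{\hdet}(h)\cdot a=\Xi^r_{\hdet}(h)\cdot a$$
for all $a\in A$ and $h\in H$. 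This identity is the common engine for both parts.

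For part (1) I would use faithfulness to pass from the action to $H$ itself: the displayed equality says $\Xi^l_{\hdet}(h)-\Xi^r_{\hdet}(h)$ annihilates $A$, hence is $0$, so $\Xi^l_{\hdet}=\Xi^r_{\hdet}$ as automorphisms of $H$, i.e. $\sum\hdet(h_1)h_2=\sum h_1\hdet(h_2)$ for all $h$. Pairing with an arbitrary grouplike $g\in G(K)$ (an algebra map $H\to\Bbbk$) turns this into $\hdet\cdot g=g\cdot\hdet$ in $K$, so $\hdet$ commutes with every grouplike and is central in $G(K)$. For the consequence, triviality of $Z(G(K))$ forces the grouplike $\hdet$ to equal the identity; then $A_{\hdet^{-1}}=A_e=A^H$ is free of rank one over $R=A^H$, so the Jacobian $\mathsf{j}_{A,H}$ exists (equal to $1$) and $A^H$ is AS Gorenstein by Theorem \ref{xxthm2.4}(3).

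For part (2) the obstacle is that only inner-faithfulness is assumed, so I cannot kill $\Xi^l_{\hdet}(h)-\Xi^r_{\hdet}(h)$ in $H$ directly. Instead I would make the identity explicit for $H=(\Bbbk G)^\ast$, where $K=\Bbbk G$ and $G(K)=G$. Writing $\hdet$ as a group element $\mathsf{d}\in G$ and using the idempotents $p_g$ with $p_g\cdot A=A_g$ and $\Delta(p_g)=\sum_{x}p_x\otimes p_{x^{-1}g}$, a direct computation gives $\Xi^r_{\hdet}(p_g)=p_{g\mathsf{d}^{-1}}$ and $\Xi^l_{\hdet}(p_g)=p_{\mathsf{d}^{-1}g}$. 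The displayed identity then says $p_{\mathsf{d}^{-1}g}$ and $p_{g\mathsf{d}^{-1}}$ induce the same projection of $A$; when $\mathsf{d}^{-1}g\neq g\mathsf{d}^{-1}$ these are distinct orthogonal idempotents, so $A_{\mathsf{d}^{-1}g}=0$. Letting $g$ vary, this shows $A_x=0$ for every $x\notin C_G(\mathsf{d})$; hence the support $G_0=\{x:A_x\neq0\}$ lies in $C_G(\mathsf{d})$.

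Finally I would invoke inner-faithfulness: $G_0$ is a subgroup by Lemma \ref{xxlem1.5}(4), and inner-faithfulness of the $G$-grading forces $G_0=G$, so $G=G_0\subseteq C_G(\mathsf{d})$ and $\mathsf{d}\in Z(G)=\{1\}$; thus $\hdet$ is trivial. Since $G\neq\{1\}$ and the action is inner-faithful, some $A_x\neq0$ with $x\neq e$, whence $A\neq A^H$; triviality of $\hdet$ then contradicts the fact that a reflection Hopf algebra must have nontrivial homological determinant when $A\neq A^H$ \cite[Theorem 0.6]{CKWZ1}, so $H$ is not a reflection Hopf algebra in the sense of Definition \ref{xxdef1.4}. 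The main obstacle throughout is exactly this last bridge in part (2)—upgrading the weaker inner-faithfulness to a centrality statement—which is why the explicit $p_g$-calculation, rather than a direct appeal to part (1), is needed.
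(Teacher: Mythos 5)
Your proposal is correct, and part (1) — deriving $\Xi^l_{\hdet}(h)\cdot a=\Xi^r_{\hdet}(h)\cdot a$ from \eqref{E2.4.1} via the $r$-Nakayama hypothesis, killing the difference by faithfulness, and pairing with grouplikes to get centrality of $\hdet$ in $G(K)$ — is exactly the paper's argument; your filling-in of ``the consequence is clear'' via Theorem \ref{xxthm2.4}(3) is also fine. Where you genuinely diverge is part (2). You say you ``cannot kill $\Xi^l_{\hdet}(h)-\Xi^r_{\hdet}(h)$ in $H$ directly'' under mere inner-faithfulness and therefore run the explicit idempotent computation $\Xi^r_{\hdet}(p_g)=p_{g\mathsf{d}^{-1}}$, $\Xi^l_{\hdet}(p_g)=p_{\mathsf{d}^{-1}g}$ to conclude $G_0\subseteq C_G(\mathsf{d})$ and then $\mathsf{d}\in Z(G)=\{1\}$. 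The paper takes a shortcut you missed: since $H=(\Bbbk G)^{\ast}=\bigoplus_{g}\Bbbk p_g$ acts on $A=\bigoplus_g A_g$ by $p_g\cdot a=\delta$-projection, and Lemma \ref{xxlem1.5}(4) plus inner-faithfulness of the coaction force $G_0=G$, the action is in fact \emph{faithful}, so part (1) applies verbatim and centrality of $\hdet$ in the trivial center $Z(G)$ finishes it. Your route is longer but self-contained and extracts slightly more refined information (the support $G_0$ always lies in the centralizer of $\hdet$, even without concluding faithfulness first); the paper's route buys brevity by observing that for commutative $H$ inner-faithfulness upgrades to faithfulness for free. Both correctly close with \cite[Theorem 0.6]{CKWZ1}.
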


\begin{proof}
(1) Under hypothesis of $\mu$ being $r$-Nakayama and the fact that 
$\mu$ is a graded algebra homomorphism, \eqref{E2.4.1} becomes
$$\Xi^{r}_{\hdet}(h) \cdot a= \Xi^{l}_{\hdet}(h)\cdot a$$
for all $a\in A$ and $h\in H$. Since the $H$-action is faithful,
we have $\Xi^{r}_{\hdet}(h)=\Xi^{l}_{\hdet}(h)$ for all
$h\in H$. Applying $g\in G(K)$ to the above equation, 
we obtain that $(g\circ \hdet)(h)=(\hdet\circ g)(h)$. Thus
$\hdet$ commutes with all elements $g\in G(K)$. This shows the
main assertion, and the consequence is clear.

(2) By Lemma \ref{xxlem1.5}(4), $G_0:=\{g\in g\mid A_{g}\neq 0\}$
is a subgroup of $G$.  Since the $H$-action on $A$ is inner-faithful,
the $K$-coaction on $A$ is inner-faithful. Thus, $G_0=G$. This 
implies that $H$-action on $A$ is in fact faithful. By part (1),
$\hdet$ is trivial. By \cite[Theorem 0.6]{CKWZ1}, $A^H$ is not
AS regular, hence $H$ is not a reflection Hopf algebra.
\end{proof}

\begin{definition}
\label{xxdef2.8}
Let $H$ act on $A$ and $\inth$ be the integral of $H$. 
\begin{enumerate}
\item[(1)] 
The {\it pertinency ideal} of the $H$-action on $A$ is defined to be
$${\mathcal P}_{A,H}:=(A\# H)(1\# \inth) (A\# H)\subseteq A\# H.$$
\item[(2)] \cite[Definition 1.4]{HZ}
The {\it radical ideal} of the $H$-action on $A$ is defined to be
$${\mathfrak r}_{A,H}:={\mathcal P}_{A,H}\cap A$$
identifying $A$ with $A\# 1\subseteq A\# H$.
\item[(3)] \cite[Definition 0.1]{BHZ1} 
The {\it pertinency} of the $H$-action on $A$ is defined to be
$$\p(A,H) := \GKdim(A\#H) - \GKdim(A\#H/\mathcal P_{A,H}).$$
\end{enumerate}
\end{definition}

The {\it radical ideal} of a group $G$-action on an algebra $A$ 
was introduced in \cite[Definition 1.4]{HZ} using pertinence
sequences. By the proof of \cite[Proposition 2.4]{HZ},
that definition agrees with Definition \ref{xxdef2.8}(2) when $H$ is a 
group algebra.

Under some mild hypotheses, we will show that the radical 
ideal is essentially the Jacobian of the $H$-action on $A$
when $H$ is a reflection Hopf algebra. For simplicity,
let $m$ stand for $\hdet^{-1}$ following the notation of \cite{KKZ4}. 

From now on until Theorem \ref{xxthm2.12}, let $H=(\Bbbk G)^\ast$ 
for some finite group $G$. Assume that ${\rm{char}}\; \Bbbk=0$.
Then the integral $\inth$ of $H$ is $p_{1}$ where $1$ is the identity 
of $G$. Since $H=\bigoplus_{g\in G} \Bbbk p_{g}$, we have 
$A=\oplus_{g\in G} A_g$ where $A_g=p_{g}\cdot A$. By using the 
comultiplication given in \eqref{E1.3.5}, one easily checks that 
the following equations hold.

\begin{lemma}
\label{xxlem2.9} Let $H=(\Bbbk G)^\ast$, $g,h\in G$ and $b_h\in A_h$. Then 
\begin{enumerate}
\item[(1)]
$(b_{h}\# 1)(1\# p_{g})=b_h\# p_{g}$.
\item[(2)]
$(1\# p_g)(b_h\# 1)=b_h\# p_{h^{-1}g}$.
\item[(3)]
$(1\# p_{hg})(b_h\# 1)=b_h\# p_{g}$.
\end{enumerate}
\end{lemma}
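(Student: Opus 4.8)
The plan is to read off all three identities directly from the multiplication rule of the smash product $A\#H$, namely
\[
(a\# u)(b\# w)=\textstyle\sum a\,(u_1\cdot b)\# u_2 w
\]
for $a,b\in A$ and $u,w\in H$, combined with the explicit comultiplication of the idempotents $p_g$. The crucial simplification is that $H=(\Bbbk G)^\ast$ is commutative, so the ideal $I_{com}$ of \eqref{E1.3.3} is zero; hence the correction term $X_g$ in \eqref{E1.3.5} vanishes and we have the clean formula $\Delta(p_g)=\sum_{k\in G}p_k\otimes p_{k^{-1}g}$, together with $\epsilon(p_g)=\delta_{g,e}$. The second ingredient is the projection identity: since $A=\bigoplus_{g\in G}A_g$ with $A_g=p_g\cdot A$ and the $p_g$ are orthogonal idempotents, one has $p_k\cdot b_h=\delta_{k,h}\,b_h$ for any $b_h\in A_h$.

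With these in hand, each part is a one-line computation. For (1), I would apply the rule with $u=1_H$: since $\Delta(1_H)=1_H\otimes 1_H$ and $1_H$ acts as the identity on $A$, the sum collapses to $(b_h\#1)(1\#p_g)=b_h(1_H\cdot 1_A)\# 1_H p_g=b_h\# p_g$. For (2), I would take $a=1$, $u=p_g$, $b=b_h$, $w=1$, which gives
\[
(1\# p_g)(b_h\#1)=\textstyle\sum_{k\in G}(p_k\cdot b_h)\# p_{k^{-1}g};
\]
the projection identity kills every term except $k=h$, leaving $b_h\# p_{h^{-1}g}$. Part (3) is then immediate by substituting $g\mapsto hg$ in (2): $(1\#p_{hg})(b_h\#1)=b_h\# p_{h^{-1}(hg)}=b_h\# p_g$.

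There is no genuine obstacle here; the statement is purely computational. The only point that warrants care is bookkeeping of conventions—the precise form of the smash-product multiplication and the side on which $H$ acts on $A$—so that the comultiplication indices in $\Delta(p_g)=\sum_k p_k\otimes p_{k^{-1}g}$ line up correctly with the factor $b_h$ lying in $A_h$. Once the vanishing of $X_g$ and the identity $p_k\cdot b_h=\delta_{k,h}b_h$ are recorded, the three equations follow mechanically.
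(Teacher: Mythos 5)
Your proof is correct and is exactly the computation the paper intends: the paper gives no written proof, merely asserting that the identities follow from the comultiplication \eqref{E1.3.5}, and your observation that $I_{com}=0$ (hence $X_g=0$) for commutative $H$, combined with $p_k\cdot b_h=\delta_{k,h}b_h$ and the smash-product multiplication, is precisely that verification.
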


\begin{lemma}
\label{xxlem2.10} Let $\inth$ be the integral of $H=(\Bbbk G)^\ast$. Then 
$$(A\# 1)\cap (A\# H)(1\#\inth)(A\# H)=(\bigcap_{g\in G} 
A A_{g})\# 1.$$
As a consequence,
$${\mathfrak r}_{A,H}=\bigcap_{g\in G} A A_{g}.$$
\end{lemma}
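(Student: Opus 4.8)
The plan is to compute the pertinency ideal ${\mathcal P}_{A,H}=(A\#H)(1\#\inth)(A\#H)$ explicitly and then intersect it with $A\#1$. Recall that for $H=(\Bbbk G)^\ast$ the integral is $\inth=p_e$ with $e=1_G$, that $A\#H=\bigoplus_{t\in G}A\#p_t$ as a $\Bbbk$-space, and that this sum is \emph{direct}. Since $(A\#H)(1\#p_e)(A\#H)$ is the two-sided ideal generated by $1\#p_e$, it is the $\Bbbk$-linear span of the products $(a\#p_s)(1\#p_e)(b\#p_t)$ with $a,b\in A$ and $s,t\in G$, so it suffices to evaluate each such product.

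First I would reduce each generator to a single homogeneous component. Writing $a\#p_s=(a\#1)(1\#p_s)$ and using $(1\#p_s)(1\#p_e)=\delta_{s,e}(1\#p_e)$, one gets $(a\#p_s)(1\#p_e)=\delta_{s,e}\,(a\#p_e)$. Next, decomposing $b=\sum_{h\in G}b_h$ with $b_h\in A_h$ and pushing $p_e$ to the right by Lemma \ref{xxlem2.9}(2),(1), a short calculation gives
$$(a\#p_e)(b\#p_t)=\sum_{h\in G}\delta_{h,t^{-1}}\,ab_h\#p_t=ab_{t^{-1}}\#p_t,$$
where $b_{t^{-1}}=p_{t^{-1}}\cdot b$. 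Hence every generator equals $\delta_{s,e}\,ab_{t^{-1}}\#p_t$. As $a$ ranges over $A$ and $b_{t^{-1}}$ over $A_{t^{-1}}$, the products $ab_{t^{-1}}$ sweep out exactly $AA_{t^{-1}}$, so I would conclude
$${\mathcal P}_{A,H}=\bigoplus_{t\in G}(AA_{t^{-1}})\#p_t.$$

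Finally I would intersect with $A\#1$. The key point is that $a\#1=\sum_{g\in G}a\#p_g$ has the \emph{same} element $a$ in every component, so, by the directness of $A\#H=\bigoplus_t A\#p_t$, the element $a\#1$ lies in ${\mathcal P}_{A,H}$ if and only if $a\#p_t\in(AA_{t^{-1}})\#p_t$ for every $t$, that is, $a\in AA_{t^{-1}}$ for all $t\in G$. Reindexing $g=t^{-1}$ turns this into $a\in\bigcap_{g\in G}AA_g$, which gives $(A\#1)\cap{\mathcal P}_{A,H}=(\bigcap_{g\in G}AA_g)\#1$; the displayed consequence ${\mathfrak r}_{A,H}=\bigcap_{g\in G}AA_g$ is then immediate from Definition \ref{xxdef2.8}(2).

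The computation itself is routine once Lemma \ref{xxlem2.9} is in hand; the only points needing care are the two containments in the description of ${\mathcal P}_{A,H}$ (that the products genuinely exhaust $AA_{t^{-1}}$ and produce nothing larger) and the use of the directness of the component decomposition in the final intersection, since it is precisely the common element $a$ across all components of $a\#1$ that forces membership in $AA_{t^{-1}}$ simultaneously for every $t$, and hence the intersection over all $g$.
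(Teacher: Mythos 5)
Your proposal is correct and follows essentially the same route as the paper: both compute $(A\#H)(1\#\inth)(A\#H)=\bigoplus_{t}(AA_{t^{-1}})\#p_t$ by pushing the idempotents through the $G$-decomposition of $A$ via Lemma \ref{xxlem2.9}, and both then use the fact that $a\#1=\sum_t a\#p_t$ has the same $A$-entry in every component to force $a\in AA_g$ for all $g$. The only cosmetic difference is that you expand the ideal as a span of three-fold products of homogeneous generators, whereas the paper performs the same reduction in a single chain of equalities.
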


\begin{proof}
We compute
$$\begin{aligned}
(A\# H)(1\#\inth)(A\# H)&=(\sum_{h} A\# p_h)
(1\# p_{1_G})(\sum_{i,j} A_{i}\# p_j)\\
&=(A\# 1)(1\# p_{1_G})(\sum_{i,j} A_{i}\# p_j)\\
&=(A\# 1)(\sum_{i,j} A_{i}\# p_{i^{-1}}p_j)\\
&=\sum_i AA_i\# p_{i^{-1}}.
\end{aligned}
$$
If $x\in (A\# 1)\cap (A\# H)(1\#\inth)(A\# H)$, 
then $x=y\# 1=y\# \sum_{i} p_{i^{-1}}$ for $y\in A$.
By the above computation, $y\in AA_{i}$ for all 
$i\in G$. Thus $y\in \bigcap_{g\in G} A A_{g}$
as required. 
\end{proof}

\begin{lemma}
\label{xxlem2.11} 
Assume Hypotheses \ref{xxhyp0.4}. Let $m:=\hdet^{-1}\in G$.
\begin{enumerate}
\item[(1)]
For each $g\in G$, there is a nonzero $f_g\in A$ such $A_g=R f_g=f_g R$.
\item[(2)]
For each $g\in G$, there is an $h\in G$ such that $f_h f_g=_{\Bbbk^{\times}}
f_m$.
\item[(3)]
$$\bigcap_{g\in G} A A_{g}=\bigcap_{g\in G} A f_{g}=A f_{m}=f_{m} A.$$
\end{enumerate}
\end{lemma}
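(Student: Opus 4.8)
The plan is to prove the three parts in order, exploiting that under Hypotheses \ref{xxhyp0.4} the commutative Hopf algebra $H=(\Bbbk G)^{\ast}$ gives a $G$-grading $A=\bigoplus_{g\in G}A_g$ with $A_e=R$, in which every graded piece is a rank-one free $R$-module on both sides.

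For part (1) I would first check that every $A_g$ is nonzero. Since the $H$-action is inner-faithful, the associated $K$-coaction is inner-faithful, so the support $G_0:=\{g\in G\mid A_g\neq0\}$ generates $G$; as $G_0$ is already a subgroup by Lemma \ref{xxlem1.5}(4), this forces $G_0=G$ (exactly the argument used in the proof of Corollary \ref{xxcor2.7}(2)). Then Lemma \ref{xxlem1.5}(6) shows each $A_g$ is free of rank one over $R$ on both sides, and choosing a lowest-degree homogeneous generator $f_g$ as in Notation \ref{xxnot1.6} gives $A_g=Rf_g=f_gR$.

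The heart of the matter is part (2), which I expect to be the main obstacle. Write $\phi(g):=\deg f_g$. The grading forces any $h$ with $f_hf_g=_{\Bbbk^{\times}}f_m$ to satisfy $hg=m$, so the only candidate is $h=mg^{-1}$, and part (2) becomes the degree identity $\phi(mg^{-1})+\phi(g)=\phi(m)$, equivalently that the normal element $c_{mg^{-1},g}$ of \eqref{E1.6.3} is a nonzero scalar. Since $A$ is a domain, $f_{mg^{-1}}f_g=c_{mg^{-1},g}f_m$ has $c_{mg^{-1},g}\neq0$ of nonnegative degree, giving the automatic inequality $\phi(mg^{-1})+\phi(g)\geq\phi(m)=:D$. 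For the reverse I would use a global count: because $A=\bigoplus_gRf_g$ is free over $R$ of rank $|G|$, the quotient Hilbert series is $\xi(t)=h_A(t)/h_R(t)=\sum_{g\in G}t^{\phi(g)}$, with $\deg\xi=\deg{\mathsf j}_{A,H}=\phi(m)=D$ by Corollary \ref{xxcor2.5}(2). As $A$ and $R$ are both AS regular and $A$ is finite free over $R$, they share a GK-dimension and hence an injective dimension $d$, so the Gorenstein symmetry $h(t^{-1})=(-1)^dt^{\mathfrak l}h(t)$ of each Hilbert series (see \cite{StZ}), divided, yields $\xi(t^{-1})=t^{-D}\xi(t)$; thus $\xi$ is palindromic and $\sum_{g}\phi(g)=\tfrac12|G|D$. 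Summing the automatic inequality over all $g$ and using that $g\mapsto mg^{-1}$ permutes $G$ gives $|G|D=2\sum_g\phi(g)=\sum_g\big(\phi(g)+\phi(mg^{-1})\big)\geq|G|D$, and equality forces each summand to equal $D$. This averaging-plus-positivity step, together with pinning down the palindrome, is the delicate point; everything else is bookkeeping.

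For part (3) the first equality is immediate, since $A_g=Rf_g$ gives $AA_g=(AR)f_g=Af_g$. The inclusion $Af_m\subseteq\bigcap_gAf_g$ is precisely part (2), as $f_m=_{\Bbbk^{\times}}f_{mg^{-1}}f_g\in Af_g$ for every $g$. For the reverse inclusion I would observe that each $Af_g$ is $H$-stable, hence $G$-graded, with $A_h$-component $(Af_g)_h=A_{hg^{-1}}f_g=c_{hg^{-1},g}A_h$; intersecting componentwise, the $A_h$-component of $\bigcap_gAf_g$ lies in its $g=m$ term $c_{hm^{-1},m}A_h=(Af_m)_h$, so $\bigcap_gAf_g\subseteq Af_m$. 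This proves $\bigcap_gAf_g=Af_m$. By Lemma \ref{xxlem2.10} this common ideal is the radical ideal $\mathfrak{r}_{A,H}$, which is two-sided; running the mirror of Lemma \ref{xxlem2.10} and of the component argument on the right gives $\mathfrak{r}_{A,H}=\bigcap_gA_gA=\bigcap_gf_gA=f_mA$, whence $Af_m=f_mA$. (Note that the inclusion $\bigcap_gAf_g\subseteq Af_m$ does not itself use part (2); part (2) enters only in $Af_m\subseteq\bigcap_gAf_g$ and in the final identification with a principal ideal.)
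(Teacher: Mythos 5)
Your argument reaches the right conclusions but by a genuinely different route from the paper's, which at every step simply quotes \cite{KKZ4}: part (1) is \cite[Theorem 3.5(1)]{KKZ4}; part (2) is read off from the fact (\cite[Theorem 3.5(2)]{KKZ4}) that the covariant algebra $A^{cov\; H}=A/(R_{\geq 1})$ is Frobenius with $\Bbbk$-basis $\{\overline{f_g}\}_{g\in G}$, so each $\overline{f_g}$ pairs nontrivially into the socle $\Bbbk\overline{f_m}$; and the normality of $f_m$ needed in part (3) is \cite[Theorem 0.5(1)]{KKZ4}. Your substitute for part (2) --- the automatic inequality $\deg f_{mg^{-1}}+\deg f_g\geq \deg f_m$ coming from the domain hypothesis, combined with palindromicity of $\xi(t)=\sum_g t^{\deg f_g}$ and averaging over $G$ --- is a clean, nearly self-contained derivation of exactly the consequence of the Frobenius property that is used, and the equality-case analysis is sound. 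Two spots need repair, though neither is fatal. First, ``they share a GK-dimension and hence an injective dimension'' is not a valid inference (the injective dimension of an AS regular algebra is not known to equal its GK-dimension in general); what you actually need, $\injdim R=\injdim A$, is established in the proof of Theorem \ref{xxthm2.4} via local cohomology --- or you can bypass it, since the sign in $\xi(t^{-1})=\pm t^{-D}\xi(t)$ is forced to be $+$ by positivity of the coefficients of $\xi$. Second, the ``mirror of Lemma \ref{xxlem2.10}'' does not come for free: that computation naturally exhibits the $\#\,p_{h^{-1}}$-component of the pertinency ideal as $AA_h$, not $A_hA$, so $\mathfrak{r}_{A,H}=\bigcap_g A_gA$ needs its own argument (e.g.\ via the opposite algebra). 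A shorter finish from where you stand: $\mathfrak{r}_{A,H}=Af_m$ is a two-sided ideal, so $f_mA\subseteq Af_m$; writing $f_ma=\psi(a)f_m$ defines a degree-preserving injective endomorphism $\psi$ of the locally finite domain $A$, which is therefore bijective, giving $f_mA=Af_m$.
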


\begin{proof} 
(1) By \cite[Theorem 3.5(1)]{KKZ4}, for each $g\in G$,
$A_g=R f_g=f_g R$ for some homogeneous element $0\neq f_g\in A$.

(2) By \cite[Theorem 3.5(2)]{KKZ4}, the covariant
algebra $A^{cov H}$ [Definition \ref{xxdef1.11}] 
is the quotient algebra $A/I$ where 
$I=\oplus_{g\in G} (A^H)_{\geq 1} f_{g}$, and $A^{cov H}$
is Frobenius. Further $A^{cov H}$ has a $\Bbbk$-basis
$\{\overline{f_{g}}\}_{g\in G}$. Since $A^{cov H}$ is graded
and Frobenius, for every $g$, there is
an $h\in G$ such that $\overline{f_h} \; \overline{f_g}=a \overline{f_m}$
for some $0\neq a\in \Bbbk$. Then $hg=m$ and $f_h f_g=a f_{m}$. 

(3) As a consequence of part (2), $Af_{m}\subseteq A f_g$ for all $g$. 
Therefore $\bigcap_{g\in G} A f_{g}=A f_{m}$. By 
\cite[Theorem 0.5(1)]{KKZ4}, $f_m$ is a normal element.
Then $Af_{m}=f_{m}A$. This finishes the proof.
\end{proof}

Now we prove Theorem \ref{xxthm0.5}, which is 
Theorem \ref{xxthm2.12}(2) below. Following \cite{KKZ4}, 
let 
\begin{equation}
\label{E2.11.1}\tag{E2.11.1}
{\mathfrak G}:=\{h\in G \mid \deg f_h=1\}.
\end{equation}
(In \cite{KKZ4}, this set is denoted by 
${\mathfrak R}$.)

\begin{theorem}
\label{xxthm2.12}
Assume Hypotheses \ref{xxhyp0.4}. 
\begin{enumerate}
\item[(1)]
The radical ideal ${\mathfrak r}_{A,H}$ is a principal ideal of $A$
generated by ${\mathsf j}_{A,H}$.
\item[(2)]
Both ${\mathsf j}_{A,H}$ and ${\mathsf a}_{A,H}$ are products of
elements in degree 1 of the form $f_h$.
\item[(3)]
${\mathsf a}_{A,H}$ divides ${\mathsf j}_{A,H}$ from the left and the right.
\end{enumerate}
\end{theorem}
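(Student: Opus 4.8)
The plan is to dispatch part (1) immediately from the lemmas already in hand, to carry out the real work of part (2) inside the covariant algebra, and then to read off part (3) from Frobenius duality. For part (1), Lemma \ref{xxlem2.10} gives ${\mathfrak r}_{A,H}=\bigcap_{g\in G}AA_g$ and Lemma \ref{xxlem2.11}(3) gives $\bigcap_{g\in G}AA_g=Af_m=f_mA$, where $m:=\hdet^{-1}$. Since ${\mathsf j}_{A,H}=_{\Bbbk^{\times}}f_{\hdet^{-1}}=f_m$ by Definition \ref{xxdef2.1}(1), with existence guaranteed by Corollary \ref{xxcor2.5}(1), and since $f_m$ is normal, this reads ${\mathfrak r}_{A,H}=A{\mathsf j}_{A,H}={\mathsf j}_{A,H}A$; that is, ${\mathfrak r}_{A,H}$ is the principal ideal generated by ${\mathsf j}_{A,H}$.

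For part (2) I would work in the covariant algebra $B:=A^{cov\; H}$ of Definition \ref{xxdef1.11}(3). By \cite[Theorem 3.5(2)]{KKZ4}, used already in the proof of Lemma \ref{xxlem2.11}, $B=A/\bigoplus_{g\in G}R_{\geq 1}f_g$ is a graded Frobenius algebra with $\Bbbk$-basis $\{\overline{f_g}\}_{g\in G}$. Thus $B$ is $G$-graded with one-dimensional homogeneous components $B_g=\Bbbk\,\overline{f_g}$, each $B_g$ concentrated in $\mathbb{Z}$-degree $\deg f_g$. Because $A$ is generated in degree one [Hypotheses \ref{xxhyp0.4}(4)], so is $B$; and since $R_{\geq 1}$ dies in $B$, the degree-one part of $B$ is spanned by $\{\overline{f_h}\mid h\in{\mathfrak G}\}$ for the set ${\mathfrak G}$ of \eqref{E2.11.1}. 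Fix $g\in G$ and put $d:=\deg f_g$. Expanding the nonzero element $\overline{f_g}$ as a sum of length-$d$ monomials in the generators $\overline{f_h}$ ($h\in{\mathfrak G}$), each monomial $\overline{f_{h_1}}\cdots\overline{f_{h_d}}$ lands in the one-dimensional component $B_{h_1\cdots h_d}$, so matching $G$-degrees forces some monomial with $h_1\cdots h_d=g$ to be a nonzero scalar multiple of $\overline{f_g}$. This extraction is the step that needs care: it is precisely the one-dimensionality of the $G$-components, paired with the degree count $\deg f_{h_1}+\cdots+\deg f_{h_d}=d=\deg f_g$, that converts a mere spanning statement into a single reduced factorization.

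It remains to lift this relation to $A$. Iterating \eqref{E1.6.3} gives $f_{h_1}\cdots f_{h_d}=c\,f_g$ with $c\in R$, and reducing modulo $R_{\geq 1}$ yields $\overline{c}\,\overline{f_g}=_{\Bbbk^{\times}}\overline{f_g}$, whence $\overline{c}\in\Bbbk^{\times}$, so $\deg c=0$ and $c\in R_0=\Bbbk$; as $A$ is a domain the left-hand product is nonzero, so $c\in\Bbbk^{\times}$ and $f_{h_1}\cdots f_{h_d}=_{\Bbbk^{\times}}f_g$ in $A$. Taking $g=m$ and $g=m^{-1}$, and recalling ${\mathsf j}_{A,H}=_{\Bbbk^{\times}}f_m$ and ${\mathsf a}_{A,H}=_{\Bbbk^{\times}}f_{\hdet}=f_{m^{-1}}$, exhibits both ${\mathsf j}_{A,H}$ and ${\mathsf a}_{A,H}$ as products of degree-one elements of the form $f_h$, which is part (2).

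For part (3) I would invoke the duality of the Frobenius algebra $B$ directly. Applying Lemma \ref{xxlem2.11}(2) with $g=m^{-1}$ yields $h\in G$ with $f_h f_{m^{-1}}=_{\Bbbk^{\times}}f_m$ (indeed $h=m^2$), exhibiting ${\mathsf a}_{A,H}=_{\Bbbk^{\times}}f_{m^{-1}}$ as a right factor of ${\mathsf j}_{A,H}=_{\Bbbk^{\times}}f_m$. The mirror of Lemma \ref{xxlem2.11}(2), namely the right-hand nondegeneracy of the same associative Frobenius form on $B$, supplies $h'\in G$ with $f_{m^{-1}}f_{h'}=_{\Bbbk^{\times}}f_m$, exhibiting ${\mathsf a}_{A,H}$ as a left factor. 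Hence ${\mathsf a}_{A,H}$ divides ${\mathsf j}_{A,H}$ from both the left and the right, completing part (3).
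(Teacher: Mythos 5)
Your proof is correct and follows essentially the same route as the paper: part (1) from Lemmas \ref{xxlem2.10} and \ref{xxlem2.11}, part (2) inside the covariant algebra, and part (3) from the Frobenius property of $A^{cov\; H}$ as in the proof of Lemma \ref{xxlem2.11}(2). The only real difference is that in part (2) you replace the paper's citation of the skew Hasse algebra structure from \cite[Theorem 3.5(5)]{KKZ4} with a direct argument from the one-dimensionality of the $G$-components, and in part (3) you make explicit the right-nondegeneracy needed for left divisibility, which the paper leaves implicit.
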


\begin{proof} (1) The assertion follows from  Lemmas 
\ref{xxlem2.10} and \ref{xxlem2.11}(1).

(2) By \cite[Theorem 3.5(5)]{KKZ4}, the covariant algebra 
$A^{cov H}$ is generated by elements $\{f_h \mid h\in {\mathfrak G}\}$.
Using the $G$-grading and the fact that $A^{cov H}$ is a skew 
Hasse algebra \cite[Definition 2.3(2)]{KKZ4},  every $f_g$ is 
a product of $f_{h_1}\cdots f_{h_s}$ if $g=h_1 \cdots h_s$ 
where $s=l_{\mathfrak G}(g)$ \cite[Definition 2.1]{KKZ4}. 
In particular, both ${\mathsf j}_{A,H}$ and ${\mathsf a}_{A,H}$ 
are products of elements elements in $\{f_h\mid h\in {\mathfrak G}\}$.

(3) See proof of Lemma \ref{xxlem2.11}(2).
\end{proof}

Note that, in general, Theorem \ref{xxthm2.12}(1) fails, see 
\eqref{E4.2.12}-\eqref{E4.2.13}.  Motivated by the above 
result, we have the following remarks and questions,
which can be viewed as a continuation of Remark \ref{xxrem2.3}.

\begin{remark}
\label{xxrem2.13}
Assume Hypotheses \ref{xxhyp0.1}. 
\begin{enumerate}
\item[(1)]
What is the connection between ${\mathfrak r}_{A,H}$ and 
${\mathsf j}_{A,H}$? The relation between them is not obvious, but we believe
that ${\mathfrak r}_{A,H}$ is contained in $A{\mathsf j}_{A,H}$.
See Lemma \ref{yylem3.13} for a partial result. 
\item[(2)]
As in Remark \ref{xxrem2.3}(2), we ask:
does ${\mathsf a}_{A,H}$ divide ${\mathsf j}_{A,H}$ 
(from the left and the right)? The answer is yes, see 
Theorem \ref{xxthm3.8}(2). As a consequence,
${\mathfrak R}^l({\mathsf a}_{A,H})$ is a subset of
${\mathfrak R}^l({\mathsf j}_{A,H})$. This suggests another 
question: does the equation
$${\mathfrak R}^l({\mathsf a}_{A,H})={\mathfrak R}^l({\mathsf j}_{A,H})$$
always hold?

On the other hand, we will give an example where
${\mathfrak R}^l({\mathsf j}_{A,H})
\neq {\mathfrak R}^r({\mathsf j}_{A,H})$, see \eqref{E4.2.6}-\eqref{E4.2.7} in
Example \ref{xxex4.2}.

One question related to this inequality is: do we have an isomorphism 
$\phi$ such that $\phi({\mathfrak R}^l({\mathsf j}_{A,H}))
={\mathfrak R}^r({\mathsf j}_{A,H})$ (respectively, 
$\phi({\mathfrak R}^l({\mathsf a}_{A,H}))
={\mathfrak R}^r({\mathsf a}_{A,H})$)? 
\item[(3)]
In the classical case,
$\deg {\mathsf a}_{A,H}$ is the number of reflecting 
hyperplanes and $\deg {\mathsf j}_{A,H}$ is the number
of pseudo-reflections. What are the meanings of
$\deg {\mathsf j}_{A,H}$ and $\deg {\mathsf a}_{A,H}$
in the noncommutative case?
\item[(4)]
Suppose that $A$ is generated in degree one.
Are ${\mathsf a}_{A,H}$ and ${\mathsf j}_{A,H}$ products 
of elements of degree 1? If yes, are these products of 
elements in ${\mathfrak R}^l({\mathsf a}_{A,H}) \cup
{\mathfrak R}^l({\mathsf a}_{A,H})$?
\end{enumerate}
Further assume that $H$ is $(\Bbbk G)^{\ast}$ and that $A$ is 
generated in degree 1.
\begin{enumerate}
\item[(5)]
It follows from \cite[Theorem 0.4]{KKZ4} that ${\mathfrak G}$ 
can be considered as a subset of both 
${\mathfrak R}^l({\mathsf j}_{A,H})$ and
${\mathfrak R}^r({\mathsf j}_{A,H})$. As a consequence,
$|{\mathfrak G}|\leq |{\mathfrak R}^l({\mathsf j}_{A,H})|$.
\item[(6)]
Is the $\deg {\mathsf a}_{A,H}=|{\mathfrak G}|$? 
For example, in Example \ref{xxex2.2}(2) 
${\mathfrak G} =  \{r, r \rho, r \rho^2\}$ and 
$\deg {\mathsf a}_{A,H}=3.$
\item[(7)]
Does the set $\{f_h \mid h\in {\mathfrak G}\}$ 
coincide with ${\mathfrak R}^l({\mathsf a}_{A,H})$?
In the ideal situation, we should call 
${\mathfrak R}^l({\mathsf a}_{A,H})$ 
the collection of ``reflecting hyperplanes''. 
In Example \ref{xxex2.2}(2) both the ``reflecting 
hyperplanes'' and the set 
$\{f_h \mid h\in {\mathfrak G}\}$
are basically $\{\Bbbk x,\Bbbk y,\Bbbk z\}$.
See Lemma \ref{xxlem4.1}(2) for a case when $H$ is not
$(\Bbbk G)^{\ast}$. 
\end{enumerate}
\end{remark}

The Jacobian is defined even when $H$ is not a reflection 
Hopf algebra and so in Example \ref{xxex2.2}(1,3) we note 
the following.

\begin{example}
\label{xxex2.14} 
\begin{enumerate}
\item[(1)]
If the $H$-action on $A$ has trivial homological determinant,
then ${\mathsf j}_{A,H}=1$, but the radical ideal 
${\mathfrak r}_{A,H}$ is not the whole algebra $A$. As a 
consequence ${\mathfrak r}_{A,H} \subsetneq ({\mathsf j}_{A,H})$.
\item[(2)]
In Example \ref{xxex2.2}(3) it follows from \cite[Lemma 2.2]{CKZ1} 
that 
$${\mathfrak r}_{A,H} =u^2(d R+udu R)A\cap (u^3R+dudR)A
\subsetneq u^2 A=({\mathsf j}_{A,H}).$$ 
\end{enumerate}
\end{example}

\section{Discriminants}
\label{xxsec3}

Geometrically the discriminant locus of a reflection group $G$ acting on $\Bbbk[V]$
is the image of reflecting hyperplanes in the corresponding 
affine quotient space \cite[Proposition 6.106]{OT}. 
Algebraically, the discriminant of $G$ is the product of 
Jacobian and reflection arrangement (as an element in the 
fixed subring $\Bbbk[V]^G$). In the noncommutative case,
we can define the discriminant as the product of the Jacobian 
and the reflection arrangement. However, the product of two 
elements in a noncommutative ring is dependent on the order 
of these elements. Therefore we make the following definitions.

\begin{definition}
\label{xxdef3.1} 
Suppose that both the Jacobian ${\mathsf j}_{A,H}$ and the reflection 
arrangement ${\mathsf a}_{A,H}$ exist, namely, 
$A_{\hdet^{-1}}={\mathsf j}_{A,H} R=R{\mathsf j}_{A,H}$ and 
that $A_{\hdet}={\mathsf a}_{A,H} R=R{\mathsf a}_{A,H}$ where $R=A^H$.
\begin{enumerate}
\item[(1)]
The {\it left discriminant} of the $H$-action on $A$, or the
{\it left $H$-discriminant} of $A$, is defined to be
$$\delta^l_{A,H}:=_{\Bbbk^\times} {\mathsf a}_{A,H} \; {\mathsf j}_{A,H}
\in R.$$
\item[(2)]
The {\it right discriminant} of the $H$-action on $A$, or the
{\it right  $H$-discriminant} of $A$, is defined to be
$$\delta^r_{A,H}:=_{\Bbbk^\times} {\mathsf j}_{A,H} \; {\mathsf a}_{A,H}
\in R.$$
\item[(3)]
If $\delta^l_{A,H}=_{\Bbbk^{\times}} \delta^r_{A,H}$, then 
$\delta^r_{A,H}$ is called {\it discriminant} of the $H$-action 
on $A$, or the {\it $H$-discriminant} of $A$, and denoted by
$\delta_{A,H}$.
\item[(4)]
The ideal ${\mathfrak r}_{A,H}\cap R$ of $R$ is called the
{\it $H$-dis-radical}, and denoted by $\Delta_{A,H}$.
\end{enumerate}
\end{definition}

We consider the following list of hypotheses that are weaker than 
Hypotheses \ref{xxhyp0.1}.

\begin{hypothesis}
\label{xxhyp3.2} 
Assume the following hypotheses:
\begin{enumerate}
\item[(a)] 
$A$ is a noetherian connected graded AS Gorenstein algebra.
\item[(b)] 
Hypotheses \ref{xxhyp0.1}(b,c).
\item[(c)]
$A$ is a free module over $R$ on both sides. 
\item[(d)]
$G_0:=\{ g\in G\mid A_g\neq 0\}$ is a subgroup of $G(K)$
and each $A_g$, for $g\in G_0$, is a free $R$-module of rank one on
both sides.
\end{enumerate}
\end{hypothesis}

Continuing Example \ref{xxex2.2}, up to scalars, in Definition \ref{xxdef3.1} (1) 
$\delta_{A,H} = 1$, in (2) $\delta_{A,H}=z^2x^2y^2$, 
and in (3) $\delta_{A,H} = u^4$.  Note that in part (3) 
$\delta_{A,H}$ exists although Hypothesis \ref{xxhyp3.2}(c) 
above is not satisfied. It is possible that Hypothesis 
\ref{xxhyp3.2}(c) can be weakened in part (2) of the following
lemma.

\begin{lemma}
\label{xxlem3.3}
\begin{enumerate}
\item[(1)]
Assume Hypotheses \ref{xxhyp0.1}. Then Hypotheses \ref{xxhyp3.2}
holds.
\item[(2)]
Assume  Hypotheses \ref{xxhyp3.2}. Then $R$ is AS Gorenstein and both 
${\mathsf j}_{A,H}$ and ${\mathsf a}_{A,H}$ exist.
\end{enumerate} 
\end{lemma}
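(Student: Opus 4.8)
The plan is to treat the two parts separately: part~(1) is a matter of checking each clause of Hypotheses~\ref{xxhyp3.2} against the stronger Hypotheses~\ref{xxhyp0.1}, while part~(2) carries the genuine content, where the weaker hypotheses must be leveraged to produce both the AS Gorenstein property of $R$ and the two invariants.

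For part~(1), clause \ref{xxhyp3.2}(a) is immediate since an AS regular algebra is in particular AS Gorenstein, and \ref{xxhyp3.2}(b) is literally \ref{xxhyp0.1}(b,c). Clause \ref{xxhyp3.2}(d) is supplied by Lemma~\ref{xxlem1.5}: part~(4) gives that $G_0$ is a subgroup (using that $A$ is a domain), and part~(6) gives that each nonzero $A_g$ is free of rank one over $R$ on both sides. The only substantive point is \ref{xxhyp3.2}(c). Here I would use that, since $H$ is a reflection Hopf algebra, $R=A^H$ is itself AS regular, and that $A$ is a finitely generated module over $R$ on each side (a standard fact for semisimple Hopf actions on noetherian algebras). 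Because $A$ is AS regular it is Cohen--Macaulay, so its local cohomology is concentrated in the top degree $\GKdim A$; comparing $\fm$-torsion with $\fm^R$-torsion (legitimate as $A$ is finite over $R$) shows $\depth_R A=\GKdim A=\gldim R$, i.e. $A$ is a maximal Cohen--Macaulay $R$-module. The graded Auslander--Buchsbaum formula over the AS regular algebra $R$ then forces $\pdim_R A=0$, so $A$ is projective, hence free, over $R$; the same argument on the other side gives freeness on both sides.

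For part~(2), the engine is the local cohomology analysis already carried out in the proof of Theorem~\ref{xxthm2.4}, most of which does not use that $R$ is AS Gorenstein. Writing $Y=(\mathrm{H}^d_{\fm}(A))^{*}=(\Bbbk{\mathfrak e})\otimes{}^{\mu}A^1$ with $h\cdot{\mathfrak e}=\hdet(h){\mathfrak e}$ as in \eqref{E2.4.2}, and using the $R$-bimodule decomposition $A=R\oplus C$ coming from the integral idempotent $p_1$ as in \eqref{E2.4.3}, additivity of local cohomology gives, for every $i$,
\[
\mathrm{H}^i_{\fm^R}(A)=\mathrm{H}^i_{\fm^R}(R)\oplus \mathrm{H}^i_{\fm^R}(C).
\]
Since $A$ is finite over $R$ (from Hypothesis~\ref{xxhyp3.2}(c)), $\mathrm{H}^i_{\fm^R}(A)=\mathrm{H}^i_{\fm}(A)$, which vanishes for $i\neq d$ because $A$ is AS Gorenstein; hence $\mathrm{H}^i_{\fm^R}(R)=0$ for $i\neq d$. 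Dualizing the degree-$d$ summand, the computation in the proof of Theorem~\ref{xxthm2.4}(2) (which invokes only \eqref{E2.4.2} and \eqref{E2.3.1}, not the AS Gorenstein property of $R$) identifies
\[
(\mathrm{H}^d_{\fm^R}(R))^{*}=(\Bbbk{\mathfrak e})\otimes A_{\hdet^{-1}}.
\]

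The main obstacle --- the one step needing an idea beyond bookkeeping --- is to show $A_{\hdet^{-1}}\neq 0$, equivalently $\hdet^{-1}\in G_0$; this is exactly what is no longer automatic once regularity and the domain hypothesis are dropped. I would argue it by nonvanishing of local cohomology: the local cohomology of the nonzero noetherian connected graded algebra $R$ cannot vanish in all cohomological degrees, since its depth, realized as the least $i$ with $\mathrm{H}^i_{\fm^R}(R)\neq 0$, is finite. As we have just shown $\mathrm{H}^i_{\fm^R}(R)=0$ for $i\neq d$, the surviving degree must be nonzero: $\mathrm{H}^d_{\fm^R}(R)\neq 0$, whence $A_{\hdet^{-1}}\neq 0$. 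With $\hdet^{-1}\in G_0$ in hand, Hypothesis~\ref{xxhyp3.2}(d) makes $A_{\hdet^{-1}}$ free of rank one over $R$ on both sides, so by Definition~\ref{xxdef2.1}(1) the Jacobian ${\mathsf j}_{A,H}$ exists; then Theorem~\ref{xxthm2.4}(3) yields that $R$ is AS Gorenstein. Finally, since $G_0$ is a subgroup we also have $\hdet=(\hdet^{-1})^{-1}\in G_0$, so $A_{\hdet}$ is likewise free of rank one over $R$ on both sides by \ref{xxhyp3.2}(d), and Definition~\ref{xxdef2.1}(2) gives the existence of ${\mathsf a}_{A,H}$, completing the proof.
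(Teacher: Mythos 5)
Your argument is correct in substance but takes a genuinely different route from the paper's in both parts. For part (1) the paper disposes of clause (c) by citing \cite[Lemma 3.3(2)]{KKZ4}; you instead reprove it via the graded Auslander--Buchsbaum formula over the AS regular ring $R$, which is essentially the standard argument behind that citation. For part (2) the paper proceeds in the opposite order from you: it first proves $R$ is AS Gorenstein, using the change-of-rings spectral sequence $\Ext^p_A(\Tor_q^R(A,M),A)\Rightarrow\Ext^{p+q}_R(M,A)$ (which collapses because $A$ is free over $R$) to show $R$ has finite injective dimension with $\Ext^d_R(\Bbbk,R)$ finite dimensional, and then invokes \cite[Theorem 0.3]{Zh}; the existence of ${\mathsf j}_{A,H}$ then follows from Theorem \ref{xxthm2.4}(2), and that of ${\mathsf a}_{A,H}$ from the subgroup property of $G_0$. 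You instead extract the identification $(\mathrm{H}^d_{\fm^R}(R))^{*}=(\Bbbk{\mathfrak e})\otimes A_{\hdet^{-1}}$ directly from the proof of Theorem \ref{xxthm2.4}, prove nonvanishing to get $\hdet^{-1}\in G_0$, obtain the Jacobian from Definition \ref{xxdef2.1}(1) together with Hypothesis \ref{xxhyp3.2}(d), and only then deduce AS Gorensteinness via the converse direction, Theorem \ref{xxthm2.4}(3). Both orders are legitimate and reach the same conclusion; yours has the mild advantage of making explicit where $A_{\hdet^{-1}}\neq 0$ comes from, while the paper's spectral-sequence route outsources the nonvanishing to \cite[Theorem 0.3]{Zh}.

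The one step you should tighten is the nonvanishing of $\mathrm{H}^d_{\fm^R}(R)$. You assert that the depth of a nonzero noetherian connected graded algebra, as the least $i$ with $\mathrm{H}^i_{\fm^R}(R)\neq 0$, is always finite; this Grothendieck-type nonvanishing is not a free general fact in the noncommutative graded setting and should not be invoked as such. It is, however, immediate from hypotheses already in play: since $A$ is a graded free $R$-module, $\mathrm{H}^d_{\fm^R}(A)$ is a direct sum of degree shifts of $\mathrm{H}^d_{\fm^R}(R)$, and $\mathrm{H}^d_{\fm^R}(A)=\mathrm{H}^d_{\fm}(A)\neq 0$ because $A$ is AS Gorenstein; hence $\mathrm{H}^d_{\fm^R}(R)\neq 0$. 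With that substitution your proof is complete.
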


\begin{proof} (1) Nothing needs to be proved for Hypotheses 
\ref{xxhyp3.2}(a,b). Part (c) is \cite[Lemma 3.3.(2)]{KKZ4}. 
Part (d) is Lemma \ref{xxlem1.5}(4,6).
(2) By \cite[Theorem 11.65]{Ro}, there is a standard spectral 
sequence for change of rings 
$$\Ext^p_A(\Tor_q^R(A,M),A)\Rightarrow \Ext^{p+q}_R(M,A)$$
for all left $R$-modules $M$. Since $A$ is finitely generated 
and free over $R$ on both sides, the above spectral sequence
collapses to
$$\Ext^p_A(A\otimes_R M, A)=\Ext^p_R(M, A).$$
This implies that $R$ has finite injective dimension and 
$\Ext^d_R(\Bbbk, R)$ is finite dimensional. By \cite[Theorem 0.3]{Zh},
$R$ is AS Gorenstein. By Theorem \ref{xxthm2.4}, ${\mathsf j}_{A,H}$ is
defined, or equivalently, $\hdet^{-1}\in G_0$. 
Since $G_0$ is a group and $\hdet\in G_0$, $A_{\hdet}$ is 
free of rank one on both sides by Hypothesis \ref{xxhyp3.2}(d). Then 
${\mathsf a}_{A,H}$ is defined. 
\end{proof}

The following lemma shows the existence of the discriminant 
under Hypotheses \ref{xxhyp3.2}.

\begin{lemma}
\label{xxlem3.4}
Assume Hypotheses \ref{xxhyp3.2}.  Let $g\in G_0$; then $A_g$ 
and $A_{g^{-1}}$ are free of rank one over $R$ on both sides. Let 
$f_g$ and $f_{g^{-1}}$ be the generators of $A_{g}$ and $A_{g^{-1}}$, 
respectively, over $R$; then the following properties hold.  
\begin{enumerate}
\item[(1)]
Every $f_{g^{-1}}f_{g}$ is a normal element in $R$. 
In particular, both $\delta^l_{A,H}$ and $\delta^r_{A,H}$ 
are normal elements in $R$.
\item[(2)]
If $g'\in G_0$, then 
$A_{g'}\cap A_{G} f_{g}= Rf_{g'}\cap R f_{g'g^{-1}} f_{g}$ and
$A_{g'}\cap f_g A_{G}= f_{g'}R \cap f_{g} f_{g^{-1}g'}R$.
As a consequence, if $f_g$ is a normal element in $A_{G}$ and 
$f_{g'}$ divides $f_g$ from the left and the right, then 
$f_g f_{g^{-1}g'}=_{\Bbbk^{\times}} f_{g'g^{-1}} f_g$.
In particular, if $f_g$ is a normal element in $A_{G}$, then 
$f_g f_{g^{-1}}=_{\Bbbk^{\times}} f_{g^{-1}} f_g$.
\item[(3)]
$R \delta^l_{A,H}=R \cap A_{G} {\mathsf j}_{A,H}$ and 
$\delta^r_{A,H} R=R \cap {\mathsf j}_{A,H}A_{G}$.
\item[(4)]
If ${\mathsf j}_{A,H}$ is a normal element in $A_{G}$, then 
$\delta_{A,H}$ is well-defined.
\end{enumerate}
\end{lemma}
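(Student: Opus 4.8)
The plan is to establish parts (1) and (2) as the substantive steps and then read off (3) and (4) as direct specializations, taking $g=\hdet^{-1}$ and $g'=e$. Throughout I use that for $g\in G_0$ the two-sided freeness $A_g=f_gR=Rf_g$ of rank one produces a graded algebra automorphism $\phi_g\in\Aut(R)$ determined by $f_gx=\phi_g(x)f_g$; injectivity of $r\mapsto f_gr$ (right freeness) makes $\phi_g$ well defined, and surjectivity follows by writing $yf_g\in Rf_g=f_gR$, so the domain hypothesis of Hypotheses \ref{xxhyp0.1} is not needed to set this up.

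For part (1), since $A_{g^{-1}}A_g\subseteq A_{g^{-1}g}=A_e=R$, the element $f_{g^{-1}}f_g$ lies in $R$, and for $x\in R$ one computes
\[
(f_{g^{-1}}f_g)\,x=f_{g^{-1}}\phi_g(x)f_g=\phi_{g^{-1}}(\phi_g(x))\,(f_{g^{-1}}f_g).
\]
Thus conjugation by $f_{g^{-1}}f_g$ realizes the automorphism $\phi_{g^{-1}}\circ\phi_g$ of $R$; since this map is bijective, $(f_{g^{-1}}f_g)R=R(f_{g^{-1}}f_g)$, so $f_{g^{-1}}f_g$ is normal (the zero element being trivially normal). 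Taking $g=\hdet^{-1}$ gives the normality of $\delta^l_{A,H}={\mathsf a}_{A,H}{\mathsf j}_{A,H}=f_{\hdet}f_{\hdet^{-1}}$, and $g=\hdet$ that of $\delta^r_{A,H}={\mathsf j}_{A,H}{\mathsf a}_{A,H}=f_{\hdet^{-1}}f_{\hdet}$.

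For part (2) the identities are pure $G$-graded bookkeeping. Writing $A_Gf_g=\bigoplus_h A_hf_g$ and using $A_hf_g\subseteq A_{hg}$ together with $A_{g'g^{-1}}f_g=Rf_{g'g^{-1}}f_g\subseteq A_{g'}$, only the summand $h=g'g^{-1}$ can meet $A_{g'}$, giving $A_{g'}\cap A_Gf_g=Rf_{g'}\cap Rf_{g'g^{-1}}f_g$; the right-hand identity is symmetric. For the consequence, normality of $f_g$ in $A_G$ means $A_Gf_g=f_gA_G$, so intersecting both with $A_{g'}$ equates the two sides: $Rf_{g'g^{-1}}f_g=f_gf_{g^{-1}g'}R$ inside $A_{g'}$. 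Setting $P=f_{g'g^{-1}}f_g$ and $Q=f_gf_{g^{-1}g'}$, membership gives $P=Qt$ and $Q=sP$ for some $s,t\in R$, and a degree count ($\deg s+\deg t=0$ with both summands nonnegative) forces $s,t\in\Bbbk$; the two-sided divisibility of $f_g$ by $f_{g'}$ is what guarantees that $P$ and $Q$ are nonzero, whence $st=1$ and $P=_{\Bbbk^{\times}}Q$. The degenerate choice $g'=e$ (so $f_{g'}=1$ divides $f_g$ on both sides) yields the ``in particular'' statement $f_gf_{g^{-1}}=_{\Bbbk^{\times}}f_{g^{-1}}f_g$.

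Parts (3) and (4) are then immediate specializations. Putting $g=\hdet^{-1}$, $g'=e$ into the two intersection identities of (2), and using $f_e=1$, $f_{\hdet}f_{\hdet^{-1}}=\delta^l_{A,H}$ and $f_{\hdet^{-1}}f_{\hdet}=\delta^r_{A,H}$, gives $R\delta^l_{A,H}=R\cap A_G{\mathsf j}_{A,H}$ and $\delta^r_{A,H}R=R\cap {\mathsf j}_{A,H}A_G$, which is (3); the ``in particular'' case of the consequence with $g=\hdet^{-1}$ gives ${\mathsf j}_{A,H}{\mathsf a}_{A,H}=_{\Bbbk^{\times}}{\mathsf a}_{A,H}{\mathsf j}_{A,H}$, i.e. $\delta^r_{A,H}=_{\Bbbk^{\times}}\delta^l_{A,H}$, so $\delta_{A,H}$ is well defined, which is (4). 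I expect the one genuinely delicate point to be inside (2): because Hypotheses \ref{xxhyp3.2} drops the domain assumption of Hypotheses \ref{xxhyp0.1}, one cannot infer $P\neq 0$ from $f_{g'g^{-1}},f_g\neq 0$, and this is precisely where the hypothesis that $f_{g'}$ divides $f_g$ on both sides must be invoked to keep the scalar relation nondegenerate.
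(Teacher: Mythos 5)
Your proof is correct and follows essentially the same route as the paper's: part (1) via the conjugation automorphism $\phi_{g^{-1}}\circ\phi_g$, part (2) via the $G$-graded decomposition of $A_Gf_g$ and $f_gA_G$ together with normality of $f_g$, and parts (3) and (4) as the specializations $g=\hdet^{-1}$, $g'=e$. Your explicit degree count for the scalar comparison $P=_{\Bbbk^\times}Q$ and your remark that $\phi_g$ is well defined from two-sided rank-one freeness alone (without the domain hypothesis) are minor refinements of what the paper leaves implicit, not a different argument.
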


\begin{proof} Since $g\in G_0$, $g\in G(K)$ such that $A_g\neq 0$. 
Since $G_0$ is a group [Hypothesis \ref{xxhyp3.2}(d)], 
$A_{g^{-1}}$ is nonzero. By Hypothesis \ref{xxhyp3.2}(d), $A_g$ 
and $A_{g^{-1}}$ are free of rank one over $R$ on both sides. 

(1) Clearly $f_{g^{-1}}f_{g}$ is an element in $R$ 
for every $g\in G_0$. It follows from \eqref{E1.6.2} that
$$f_{g^{-1}}f_{g} x= f_{g^{-1}} \phi_g(x) f_g=
(\phi_{g^{-1}}\circ \phi_g)(x)f_{g^{-1}}f_{g}.$$
Hence $f_{g^{-1}}f_{g}$ is a normal element in $R$. 

(2) We will use Lemma \ref{xxlem1.7}. For $g,g'\in G_0$, we compute
$$\begin{aligned}
A_{g'}\cap A_{G} f_{g} &= Rf_{g'}\cap (\sum_{d=1,i} {\mathcal A}_{d,i} f_{g})\\
&=Rf_{g'}\cap (\sum_{h\in G} R f_{h}f_{g})\\
&=Rf_{g'}\cap (R f_{g'g^{-1}}f_{g}\oplus \sum_{h\neq g'g^{-1}} R f_{h} f_{g})\\
&=Rf_{g'}\cap R f_{g'g^{-1}}f_{g}.
\end{aligned}
$$
Similarly, we have $A_{g'}\cap f_{g} A_{G}= 
f_{g'}R \cap f_{g} A_{G}=f_{g'}R \cap f_{g}f_{g^{-1}g'} R$.
If $f_{g}$ is a normal element in $A_{G}$, then $A_{G}f_{g}=f_{g}A_{G}$. 
Since $f_{g'}R=R f_{g'}$ and since $f_{g'}$ divides $f_g$ from 
the left and the right, we have
$$\begin{aligned}
f_{g}f_{g^{-1}g'} R&=f_{g'}R \cap f_{g}f_{g^{-1}g'} R\\
&=f_{g'}R \cap f_{g} A_{G}\\
&=A_{g'}\cap A_{G} f_{g}\\
&=A_{g'}\cap f_{g} A_{G}\\
&=Rf_{g'}\cap R f_{g'g^{-1}}f_{g}\\
&=R f_{g'g^{-1}}f_{g}.
\end{aligned}
$$
Then $f_{g}f_{g^{-1}g'}=_{\Bbbk^{\times}} f_{g'g^{-1}}f_{g}$.
Let $g'=1$, we obtain that
$f_g f_{g^{-1}}=_{\Bbbk^{\times}} f_{g^{-1}} f_g$.

(3) The assertion follows from part (2) by taking $g'=1$ 
and $g=\hdet^{-1}$.

(4) The assertion follows from parts (2,3) and the fact that 
$f_1=1$ divides $f_{\hdet^{-1}}$ trivially.
\end{proof}

The following is Theorem \ref{xxthm0.2}(3) in a special case.

\begin{theorem}
\label{xxthm3.5}
Assume Hypotheses \ref{xxhyp0.1}.
Suppose that ${\rm{char}}\; \Bbbk=0$ and 
that $H$ is commutative, namely, $H=(\Bbbk G)^{\ast}$. 
\begin{enumerate}
\item[(1)]
The discriminant $\delta_{A,H}$ is defined, namely, 
$$\delta^l_{A,H}=\delta^r_{A,H}=\delta_{A,H}.$$
\item[(2)]
$\Delta_{A,H}$ is the principal ideal of
$R$ generated by $\delta_{A,H}$.
\end{enumerate}
\end{theorem}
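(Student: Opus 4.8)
The plan is to deduce both statements from the structural lemmas already in hand, reducing everything to the normality of the Jacobian. Write $m:=\hdet^{-1}$, so that ${\mathsf j}_{A,H}=_{\Bbbk^{\times}}f_m$ and ${\mathsf a}_{A,H}=_{\Bbbk^{\times}}f_{m^{-1}}$, whence $\delta^l_{A,H}=_{\Bbbk^{\times}}f_{m^{-1}}f_m$ and $\delta^r_{A,H}=_{\Bbbk^{\times}}f_m f_{m^{-1}}$, both lying in $A_{1}=R$. By Lemma \ref{xxlem3.3}(1), Hypotheses \ref{xxhyp3.2} hold, so Lemma \ref{xxlem3.4} is available throughout.

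For part (1), by Lemma \ref{xxlem3.4}(4) it suffices to show that $f_m$ is a normal element of $A_G$. First I would record that $f_m$ is normal in all of $A$, which is the content of Lemma \ref{xxlem2.11}(3); its proof uses only \cite[Theorem 3.5(1,2)]{KKZ4} and \cite[Theorem 0.5(1)]{KKZ4}, none of which requires $A$ to be generated in degree one, so it applies under the present (weaker) hypotheses. To descend to $A_G$, let $\sigma\in\Aut(A)$ be the normalizing automorphism determined by $f_m a=\sigma(a)f_m$. Passing to the $K$-comodule structure, $f_m$ is $G$-homogeneous of degree $m$, so conjugation by $f_m$ carries $A_h$ into $A_{mhm^{-1}}$; hence $\sigma(A_G)=A_G$ and $f_mA_G=\sigma(A_G)f_m=A_Gf_m$. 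Thus $f_m$ is normal in $A_G$, and Lemma \ref{xxlem3.4}(4) yields $\delta^l_{A,H}=_{\Bbbk^{\times}}\delta^r_{A,H}$, so $\delta_{A,H}$ is well-defined.

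For part (2), I would compute ${\mathfrak r}_{A,H}\cap R$ directly. By Lemma \ref{xxlem2.10}, ${\mathfrak r}_{A,H}=\bigcap_{g\in G}AA_g$, and since $A_g=Rf_g$ we have $AA_g=ARf_g=Af_g$; by Lemma \ref{xxlem2.11}(3) this intersection collapses to ${\mathfrak r}_{A,H}=Af_m=f_mA$. It remains to intersect with $R$. Using the $A_G$-bimodule decomposition $A=A_G\oplus A_{G^c}$ of Definition \ref{xxdef1.9} together with Lemma \ref{xxlem1.7}(2), right multiplication by $f_m\in A_m\subseteq A_G$ preserves each summand, so $Af_m=A_Gf_m\oplus A_{G^c}f_m$ with the second summand contained in $A_{G^c}$; since $R=A_1\subseteq A_G$, this forces $Af_m\cap R=A_Gf_m\cap R$. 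Finally Lemma \ref{xxlem3.4}(3) identifies $A_Gf_m\cap R=R\cap A_G{\mathsf j}_{A,H}=R\,\delta^l_{A,H}$, which by part (1) equals $R\,\delta_{A,H}$. As $\delta_{A,H}$ is normal in $R$ by Lemma \ref{xxlem3.4}(1), this is the two-sided principal ideal generated by $\delta_{A,H}$, giving $\Delta_{A,H}=(\delta_{A,H})$.

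The main obstacle is the gap between the hypotheses of this theorem and Hypotheses \ref{xxhyp0.4}: the supporting Lemma \ref{xxlem2.11} is recorded under Hypotheses \ref{xxhyp0.4}, so the delicate point is to confirm that the two ingredients I rely on—normality of $f_m$ in $A$, and the divisibility $f_m\in Af_g$ for every $g$ (equivalently, that each $f_g$ left-divides $f_m$)—persist without assuming $A$ is generated in degree one. Both rest on the Frobenius property of the covariant algebra and on \cite[Theorem 0.5(1)]{KKZ4}, which hold for reflection dual group algebras in general; the degree-one hypothesis enters only through \cite[Theorem 3.5(5)]{KKZ4}, hence through Theorem \ref{xxthm2.12}(2), which is not needed here. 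Once that is checked, the remainder is the bookkeeping already carried out in Lemma \ref{xxlem3.4}.
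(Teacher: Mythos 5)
Your argument is correct and follows the paper's own route: normality of $f_{\hdet^{-1}}={\mathsf j}_{A,H}$ in $A$ from \cite[Theorem 0.5(1)]{KKZ4} (your Lemma \ref{xxlem2.11}(3)), the identification ${\mathfrak r}_{A,H}=A{\mathsf j}_{A,H}={\mathsf j}_{A,H}A$ of Theorem \ref{xxthm2.12}(1) via Lemmas \ref{xxlem2.10} and \ref{xxlem2.11}, and then Lemma \ref{xxlem3.4}(3,4). The only remark worth adding is that your two ``delicate'' reductions --- descending normality from $A$ to $A_G$, and showing $A f_{\hdet^{-1}}\cap R=A_G f_{\hdet^{-1}}\cap R$ --- are automatic here, since $H=(\Bbbk G)^{\ast}$ commutative forces every simple $H$-module to be one-dimensional, hence $A=A_G$ and $A_{G^c}=0$.
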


\begin{proof} 
(1) By \cite[Theorem 0.5(1)]{KKZ4}, ${\mathsf j}_{A,H}=
f_{\hdet^{-1}}$ is a normal element in $A$. Now the assertion
follows from Theorem \ref{xxthm2.12}(1) and Lemma \ref{xxlem3.4}(4). 

(2) The assertion
follows from Theorem \ref{xxthm2.12}(1) and Lemma \ref{xxlem3.4}(3). 
\end{proof}

\begin{remark}
\label{xxrem3.6}
Here we make some remarks and ask some questions before we prove 
one of the main results in this section, namely, Theorem
\ref{xxthm3.8}.
\begin{enumerate}
\item[(1)]
Assuming Hypotheses \ref{xxhyp0.1} or \ref{xxhyp3.2}, 
is $\delta_{A,H}$ always defined? The answer is YES, see 
Theorem \ref{xxthm3.8}. We might further ask:
is $\Delta_{A,H}=(\delta_{A,H})$? This is not true, see 
Example \ref{xxex4.2}. 
\item[(2)]
Note that in the commutative case, $R/(\delta_{A,H})$ is 
always reduced. So we ask the following questions in the 
noncommutative case: assuming Hypotheses \ref{xxhyp0.1}. 
is the factor ring $R/(\delta_{A,H})$ semiprime?

In Example \ref{xxex2.2}(2), 
$\delta_{A,H}=_{\Bbbk^{\times}}x^2 y^2 z^2=t_1 t_2 t_3$
and $R/(\delta_{A,H})$ is semiprime and reduced.
\item[(3)]
In the commutative case, ${\mathsf a}_{A,H}$ is reduced 
in $A$. When ${\mathsf a}_{A,H}$ is normal in $A$ 
(which is not always true by Example \ref{xxex4.2}), 
we can ask if $A/({\mathsf a}_{A,H})$ is semiprime.

In Example \ref{xxex2.2}(2), 
${\mathsf a}_{A,H}=_{\Bbbk^{\times}}xyx$
and $A/({\mathsf a}_{A,H})$ is semiprime, but contains 
nonzero nilpotent elements.
\item[(4)]
Suppose that $A$ is generated in degree 1. We ask if
$${\mathfrak R}^l({\mathsf a}_{A,H})={\mathfrak R}^l({\mathsf j}_{A,H})
={\mathfrak R}^l(\delta_{A,H})=\{ \Bbbk f_g\mid g\in {\mathfrak G}\}?$$
A similar question can be asked for ${\mathfrak R}^r$.
\end{enumerate}
\end{remark}

To prove the existence of $\delta_{A,H}$, we need
to recall some terminology introduced in Section \ref{xxsec1}.
For every left $A$-module, 
$${\text{H}}^i_{\fm}(M)=\lim_{n\to \infty} \Ext^i_A(A/A_{\geq n}, M).$$
The local cohomology functors are defined similar for right
$A$-modules $M$. When $M$ is an $A$-bimodule that is finitely 
generated on both sides, 
then ${\text{H}}^i_{\fm}(M)$ can be computed as a left $A$-module
or a right $A$-module (the result is the same). If $R$ is a subring
of $A$ such that $A$ is finitely generated over $R$ on both sides, 
then ${\text{H}}^i_{\fm}(M)$ can be computed by considering $M$ as 
a module over $R$. In the next lemma, we might calculate $\HH(M)$
in the category of graded right $R$-modules. 

\begin{lemma}
\label{xxlem3.7}
Assume Hypotheses \ref{xxhyp3.2}. Let $d$ be the injective
dimension of $A$. Suppose $g\in G_0$. 
\begin{enumerate}
\item[(1)]
Then the left action 
$p_g: A\to A$ is a right $R$-module map such that 
it decomposes into
\begin{equation}
\label{E3.7.1}\tag{E3.7.1}
p_g: A\xrightarrow{\tilde{p}_g} f_g R \xrightarrow{p^{-1}_g} A.
\end{equation}
\item[(2)]
Applying $\HH(-)$ to \eqref{E3.7.1}, $\HH(p_g)$ is the left action 
of $p_{g^{-1}}$ on the module $\HH(A)$, which decomposes into
$$\HH(A)\xrightarrow{\HH(p^{-1}_g)} 
\HH(f_g R)\xrightarrow{\HH(\tilde{p}_g)}  \HH(A).$$
\item[(3)]
Let $l_{f_g}$ be the left multiplication of element 
$f_g$ on $A$, then $\HH(l_{f_g})$ is the right multiplication
by $f_g$ on $\HH(A)$.
\item[(4)]
The composition
\begin{equation}
\label{E3.7.2}\tag{E3.7.2}
\Phi:=p_g \circ _{f_g} \circ p_1: A\xrightarrow{p_1} A\xrightarrow{l_{f_g}}
A\xrightarrow{p_g} A
\end{equation}
maps $R$, as a component of $A$ \eqref{E1.3.6}, to $f_g R=Rf_g$ 
and other component of $A$ to zero. The restriction
of the map $\Phi$ on $R$ with image $f_g R$ is an isomorphism
of right $R$-modules.
\item[(5)]
After applying $\HH(-)$ to \eqref{E3.7.2}, 
$$\HH(\Phi)=\HH(p_g \circ _{f_g} \circ p_1): \HH(A)\xrightarrow{p_{g^{-1}}} 
\HH(A)\xrightarrow{r_{f_g}}
\HH(A)\xrightarrow{p_1} \HH(A)$$
maps $\HH(f_g R)$ to $\HH(R)$ and other component of $\HH(A)$ to zero
where $r_{f_g}$ is the right multiplication by $f_g$.
\item[(6)]
$\HH(f_g R)=\{ x\in \HH(A)\mid h\cdot x= g(h) x\}
= {\mathfrak e}\otimes f_{\hdet^{-1} g^{-1}}R$ and 
$\HH(R) =\{ x\in \HH(A)\mid h\cdot x= \epsilon(h) x\}
={\mathfrak e}\otimes f_{\hdet^{-1}} R$.
\item[(7)]
$f_{\hdet^{-1}}=_{\Bbbk^{\times}}f_{(\hdet^{-1}g^{-1})}f_g$.
\item[(8)]
$f_{\hdet^{-1}}=_{\Bbbk^{\times}}f_h f_{(h^{-1}\hdet^{-1})}$
and $\deg f_{\hdet^{-1}}\geq \deg f_h$ for all $h\in G_0$. 
\item[(9)]
$f_{\hdet^{-1}}$ is a normal element in $A_G$.
\item[(10)]
$A_{G}/(R_{\geq 1})$ is Frobenius. 
\end{enumerate}
\end{lemma}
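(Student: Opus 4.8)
The plan is to treat the ten statements as one ladder resting on the identification --- from Lemma~\ref{xxlem1.10} and the proof of Theorem~\ref{xxthm2.4} --- of $\HH(A)$ (equivalently, by \eqref{E1.2.3}, of its graded dual) with the $H$-equivariant $A$-bimodule $Y=(\Bbbk{\mathfrak e})\otimes {}^{\mu}A^{1}$, on which $h\cdot{\mathfrak e}=\hdet(h){\mathfrak e}$ by \eqref{E2.4.2}. For (1), since $R=A^{H}$ consists of $H$-invariants, the coproduct formula \eqref{E1.3.5} gives $p_{g}\cdot(ar)=(p_{g}\cdot a)\,r$ for $r\in R$, so the left action of the central idempotent $p_{g}$ is a right $R$-module endomorphism of $A$ with image $p_{g}\cdot A=A_{g}=f_{g}R$; idempotency makes it factor as the projection $\tilde p_{g}$ onto $f_{g}R$ followed by the inclusion.

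Parts (2), (3) and (6) are the technical engine, and I expect the precise duality bookkeeping here to be the main obstacle. Applying the exact functor $\HH(-)$ and using that the identification with $Y$ carries the left $H$-action to the one twisted by the antipode $S$, the left action of $p_{g}$ becomes that of $S(p_{g})=p_{g^{-1}}$, while the bimodule rule $a\cdot({\mathfrak e}\otimes 1)={\mathfrak e}\otimes\mu(a)$ of Lemma~\ref{xxlem1.10}(3) turns left multiplication $l_{f_{g}}$ into right multiplication $r_{f_{g}}$; getting both the antipode twist $p_{g}\mapsto p_{g^{-1}}$ and the left/right interchange simultaneously correct is what every downstream index depends on. To pin down (6) I would compute the $H$-action on a homogeneous ${\mathfrak e}\otimes a$ with $a\in A_{j}$ as $h\cdot({\mathfrak e}\otimes a)=\sum\hdet(h_{1})\,{\mathfrak e}\otimes(h_{2}\cdot a)$ and compare with \eqref{E1.3.5}, which forces the $g$-eigenspace to be ${\mathfrak e}\otimes A_{\hdet^{-1}g^{-1}}={\mathfrak e}\otimes f_{\hdet^{-1}g^{-1}}R$ and, at $g=1$, $\HH(R)={\mathfrak e}\otimes f_{\hdet^{-1}}R$.

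Parts (4) and (5) are then formal: $\Phi=p_{g}\circ l_{f_{g}}\circ p_{1}$ projects to the trivial component $R$, multiplies into $A_{g}=f_{g}R$, and $p_{g}$ fixes $A_{g}$, so $\Phi|_{R}\colon R\to f_{g}R$ is an isomorphism of right $R$-modules; substituting (2),(3) into $\HH(\Phi)$ gives $p_{1}\circ r_{f_{g}}\circ p_{g^{-1}}$, carrying $\HH(f_{g}R)$ isomorphically onto $\HH(R)$. Since this isomorphism is right multiplication by $f_{g}$, it sends the generator ${\mathfrak e}\otimes f_{\hdet^{-1}g^{-1}}$ to ${\mathfrak e}\otimes f_{\hdet^{-1}g^{-1}}f_{g}$, which must be a scalar times the generator ${\mathfrak e}\otimes f_{\hdet^{-1}}$; this is (7), namely $f_{\hdet^{-1}g^{-1}}f_{g}=_{\Bbbk^{\times}}f_{\hdet^{-1}}$. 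Reindexing by $g=h^{-1}\hdet^{-1}$ yields (8), $f_{h}f_{h^{-1}\hdet^{-1}}=_{\Bbbk^{\times}}f_{\hdet^{-1}}$, and since this factorization is an equality up to a nonzero scalar (no positive-degree factor intervenes), $\deg f_{\hdet^{-1}}=\deg f_{h}+\deg f_{h^{-1}\hdet^{-1}}\geq\deg f_{h}$.

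For (9) I would combine (7) with (8) taken at $h=g$ to get $f_{\hdet^{-1}g^{-1}}f_{g}=_{\Bbbk^{\times}}f_{g}f_{g^{-1}\hdet^{-1}}$ for all $g\in G_{0}$; using that each $A_{g}=f_{g}R=Rf_{g}$ is free of rank one, this matches the rank-one $R$-sub-bimodules $f_{\hdet^{-1}}A_{g}$ and $A_{g'}f_{\hdet^{-1}}$ inside $A_{\hdet^{-1}g}$ (with $g'$ the appropriate conjugate) and, summing over $g\in G_{0}$, gives $f_{\hdet^{-1}}A_{G}=A_{G}f_{\hdet^{-1}}$ --- equivalently, on $A_{G}$ the $H$-action factors through the commutative $(\Bbbk G_{0})^{\ast}$, so $f_{\hdet^{-1}}$ is the Jacobian of a commutative action and its normality is the commutative statement \cite[Theorem 0.5(1)]{KKZ4}; reconciling the left and right factorizations past a possibly non-central $\hdet$ is the one point that needs care here. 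Finally, for (10) the covariant algebra $B:=A_{G}/(R_{\geq1})$ is connected graded of dimension $|G_{0}|$ with $\Bbbk$-basis $\{\overline{f_{g}}\}_{g\in G_{0}}$ and one-dimensional top degree spanned by $\overline{f_{\hdet^{-1}}}$ (top by (8)); each product $\overline{f_{h}}\,\overline{f_{k}}$ is a scalar multiple of $\overline{f_{hk}}$, and (8) guarantees $\overline{f_{h}}\,\overline{f_{h^{-1}\hdet^{-1}}}=_{\Bbbk^{\times}}\overline{f_{\hdet^{-1}}}\neq 0$, so the pairing sending $(x,y)$ to the coefficient of $\overline{f_{\hdet^{-1}}}$ in $xy$ is nondegenerate and $B$ is Frobenius.
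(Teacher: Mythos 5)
Your proposal is correct and follows essentially the same route as the paper's own proof: the identification of $\HH(A)$ with the $H$-equivariant bimodule $(\Bbbk{\mathfrak e})\otimes {}^{\mu}A^{1}$ together with the idempotent/antipode bookkeeping of \cite[Lemma 3.2(a)]{RRZ2} for parts (2), (3), (5), the ${\mathfrak e}$-eigenspace computation for (6), generator-to-generator comparison under $r_{f_g}$ for (7), reindexing for (8), the conjugation identity for (9), and the nondegenerate pairing into the $\overline{f_{\hdet^{-1}}}$-coefficient for (10) (the paper pairs on the left with $f_{\hdet^{-1}g^{-1}}$ for $g$ of minimal degree, you pair on the right via (8) -- symmetric variants of the same argument). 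One caution on (9): your parenthetical alternative invoking \cite[Theorem 0.5(1)]{KKZ4} for the $(\Bbbk G_0)^{\ast}$-action on $A_G$ must not replace the direct argument, since those dual-reflection-group results presuppose characteristic zero and an AS regular fixed ring, and the AS Gorenstein property of $A_G$ is established only later (Theorem \ref{xxthm3.9}) using this very lemma; moreover the instantiation of (8) that normality actually requires is $h=\hdet^{-1}g\hdet$ rather than $h=g$ (your ``appropriate conjugate''), exactly as in the paper's computation $f_{\hdet^{-1}}(f_g r)=_{\Bbbk^{\times}}\bigl(f_{\hdet^{-1}g\hdet}\,\phi_{\hdet^{-1}}(r)\bigr)f_{\hdet^{-1}}$.
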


\begin{proof}
(1) In this case $A_g=f_g R$ which is free of rank one over $R$
on both sides. Since the left action of $p_g$ is a right $R$-module map,
we obtain a right $R$-module decomposition of the map $p_g$.

(2) Note that $\HH(A)$ is an $H$-equivariant $A$-bimodule 
where the left $H$-action comes from the natural right 
$H$-action  on $\HH(A)$ \cite[Lemma 3.2(a)]{RRZ2}.
By definition \cite[(E2.4.1)]{RRZ2} for $i=0$, $\HH(p_g)$ 
is $p_{g^{-1}}$. The decomposition follows from \eqref{E3.7.1}.

(3) Again this follows from \cite[Lemma 3.2(a)]{RRZ2} and 
its proof. 

(4) This follows from the decomposition of $A$ and 
Lemma \ref{xxlem1.7}(2).

(5) Note that $\HH(A)$ is an $H$-equivariant $A$-bimodule.
By the proof of \cite[Lemma 3.2(a)]{RRZ2},  $\HH(p_1)$, 
$\HH(l_{f_g})$ and $\HH(p_{g})$ are $p_1$, $r_{f_{g}}$
and $p_{g^{-1}}$ (by part (2)). The assertion follows.

(6) Note that $g, \hdet\in G_0$ which is a finite group.
By part (5), $\HH(f_g R)$ is the image of the idempotent $p_{g^{-1}}$.
Hence 
$$\begin{aligned}
\HH(f_g R)&=\{ x\in \HH(A)\mid p_{g^{-1}} \cdot x= x\}\\
&=\{ x\in \HH(A)\mid h \cdot x= g^{-1}(g) x; \; \forall \; h\in H\}\\
&= (\Bbbk {\mathfrak e})\otimes R_{\hdet^{-1} g^{-1}}\\
&= (\Bbbk {\mathfrak e})\otimes f_{\hdet^{-1} g^{-1}}R.
\end{aligned}
$$
This proves the first equation. The second equation is a 
consequence by taking $g=1$.

(7) By part (5),  $\HH(\Phi)$, considered as a map from
$\HH(f_g R)$ to $\HH(R)$, is the right
multiplication by $f_g$. By part (6), this map agrees with
$$r_{f_g}:  (\Bbbk {\mathfrak e})\otimes f_{(\hdet^{-1} g^{-1})}R
\to (\Bbbk {\mathfrak e})\otimes f_{\hdet^{-1}}R.$$
Since $\HH(\Phi)$ is an isomorphism, 
we obtain that $f_{(\hdet^{-1} g^{-1})}R f_{g}=f_{\hdet^{-1}}R$.
Hence the assertion follows. 

(8) The first assertion follows by taking $g=h^{-1}\hdet^{-1}$.
The second assertion is clear.

(9) Every element in $A_G$ is a linear combination of $f_g r$
for some $g\in G_0$ and $r\in R$. Then, by part (8),
$$\begin{aligned}
f_{\hdet^{-1}} (f_g r)&=_{\Bbbk^{\times}} f_{(\hdet^{-1} g\hdet)} 
f_{(\hdet^{-1} g^{-1})} f_g r\\
&=_{\Bbbk^{\times}} f_{(\hdet^{-1} g\hdet)} f_{\hdet^{-1}} r\\
&=_{\Bbbk^{\times}} (f_{(\hdet^{-1} g\hdet)} 
\phi_{\hdet^{-1}}(r) )f_{\hdet^{-1}}.
\end{aligned}
$$
The assertion follows.

(10) Let $F:=A_{G}/(R_{\geq 1})$. Then $F=\bigoplus_{g\in G_0} \Bbbk f_g$
with multiplication satisfying part (7) or (8). For every element $x\in F$,
write $x=\sum c_g f_g$ with $c_g\neq 0$. Pick $g$ so that $\deg f_g$ is smallest
among all $g$ such that $c_g\neq 0$. Then 
$$f_{(\hdet^{-1}g^{-1})} x= c_g f_{(\hdet^{-1}g^{-1})} f_g
=_{\Bbbk^{\times}} f_{\hdet^{-1}}$$
which implies that $F$ is Frobenius.
\end{proof}

Now we are ready to prove Theorems \ref{xxthm0.2}(3) and 
\ref{xxthm0.6}. Following \eqref{E2.11.1}, we define
\begin{equation}
\label{E3.7.3} \tag{E3.7.3}
{\mathfrak R}({\mathsf j}_{A,H}):=\{\Bbbk f_g \mid g\in G_0, \deg f_g=1\}.
\end{equation}

\begin{theorem}
\label{xxthm3.8} 
Assume Hypotheses \ref{xxhyp3.2}. 
\begin{enumerate}
\item[(1)]
${\mathsf j}_{A,H}\; {\mathsf a}_{A,H}=_{\Bbbk^{\times}} {\mathsf a}_{A,H} 
\;{\mathsf j}_{A,H}$. As a consequence, the 
discriminant $\delta_{A,H}$ of the $H$-action is defined.
\item[(2)]
${\mathsf a}_{A,H}$ divides ${\mathsf j}_{A,H}$.
\item[(3)]
${\mathfrak R}({\mathsf j}_{A,H})$ is a subset of both 
${\mathfrak R}^l({\mathsf j}_{A,H})$ and ${\mathfrak R}^r({\mathsf j}_{A,H})$.
\item[(4)]
Assuming the hypotheses of Theorem \ref{xxthm0.5}, then
${\mathfrak R}({\mathsf j}_{A,H})=\{\Bbbk f_g \mid f_g\in {\mathfrak G}\}$
where ${\mathfrak G}$ is defined in \eqref{E2.11.1}.
\end{enumerate}
\end{theorem}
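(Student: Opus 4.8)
The plan is to recognize that, under Hypotheses \ref{xxhyp3.2}, the Jacobian and the reflection arrangement are precisely the distinguished generators ${\mathsf j}_{A,H}=_{\Bbbk^{\times}}f_{\hdet^{-1}}$ and ${\mathsf a}_{A,H}=_{\Bbbk^{\times}}f_{\hdet}$ (both exist by Lemma \ref{xxlem3.3}(2)), and then to read off all four assertions from the multiplicative relations among the $f_g$ recorded in Lemma \ref{xxlem3.7}, together with the normality statements of Lemma \ref{xxlem3.4}. Essentially all of the real work has already been packaged into Lemma \ref{xxlem3.7}(7)--(9); the proof of the theorem is mostly an assembly. For part (1), I would invoke Lemma \ref{xxlem3.7}(9), which says that $f_{\hdet^{-1}}={\mathsf j}_{A,H}$ is a normal element of $A_G$, and feed this into the final clause of Lemma \ref{xxlem3.4}(2) with $g=\hdet^{-1}$ (so $g^{-1}=\hdet$), yielding $f_{\hdet^{-1}}f_{\hdet}=_{\Bbbk^{\times}}f_{\hdet}f_{\hdet^{-1}}$, which is exactly ${\mathsf j}_{A,H}{\mathsf a}_{A,H}=_{\Bbbk^{\times}}{\mathsf a}_{A,H}{\mathsf j}_{A,H}$. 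Since $f_{\hdet^{-1}}f_{\hdet}\in A_{\hdet^{-1}\hdet}=A_1=R$ by Lemma \ref{xxlem1.5}(2), these products lie in $R$; the consequence that $\delta_{A,H}$ is defined then follows either from Definition \ref{xxdef3.1}(3) or directly from Lemma \ref{xxlem3.4}(4).

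For part (2), I would specialize Lemma \ref{xxlem3.7} to $g=\hdet$: part (8) with $h=\hdet$ gives $f_{\hdet^{-1}}=_{\Bbbk^{\times}}f_{\hdet}f_{\hdet^{-2}}$ (left divisibility), and part (7) with $g=\hdet$ gives $f_{\hdet^{-1}}=_{\Bbbk^{\times}}f_{\hdet^{-2}}f_{\hdet}$ (right divisibility), so ${\mathsf a}_{A,H}=f_{\hdet}$ divides ${\mathsf j}_{A,H}=f_{\hdet^{-1}}$ on both sides. Part (3) is the same mechanism applied to an arbitrary $g\in G_0$ with $\deg f_g=1$: Lemma \ref{xxlem3.7}(8) exhibits $f_g$ as a left factor of $f_{\hdet^{-1}}$ and Lemma \ref{xxlem3.7}(7) as a right factor, and since $f_g\in A_1$ this places $\Bbbk f_g$ in both ${\mathfrak R}^l({\mathsf j}_{A,H})$ and ${\mathfrak R}^r({\mathsf j}_{A,H})$, giving the two claimed inclusions.

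For part (4), I would first argue that the extra Hypotheses \ref{xxhyp0.4} force $G_0=G(K)=G$: when $H=(\Bbbk G)^{\ast}$ the corresponding $K=\Bbbk G$-coaction is inner-faithful, and since $G_0$ is a subgroup of $G$ by Lemma \ref{xxlem1.5}(4), inner-faithfulness of the coaction forces $G_0=G$, exactly as in the proof of Corollary \ref{xxcor2.7}(2). With $G_0=G$, the defining condition ``$g\in G_0$ and $\deg f_g=1$'' of ${\mathfrak R}({\mathsf j}_{A,H})$ in \eqref{E3.7.3} coincides with the condition ``$g\in{\mathfrak G}$'' of \eqref{E2.11.1}, and since the assignment $g\mapsto\Bbbk f_g$ is injective (the $A_g$ being distinct homogeneous components of $A$), the two sets agree. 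The only care needed throughout is to confirm the identifications ${\mathsf j}_{A,H}=_{\Bbbk^{\times}}f_{\hdet^{-1}}$ and ${\mathsf a}_{A,H}=_{\Bbbk^{\times}}f_{\hdet}$ and to keep straight which of the two relations in Lemma \ref{xxlem3.7}(7)--(8) supplies left versus right divisibility; I do not expect a substantive obstacle once Lemma \ref{xxlem3.7} is in hand, so the difficulty is entirely bookkeeping rather than a new idea.
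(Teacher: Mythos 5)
Your proposal is correct and follows essentially the same route as the paper: part (1) via Lemma \ref{xxlem3.7}(9) combined with Lemma \ref{xxlem3.4}(2,4) at $g=\hdet^{-1}$, parts (2) and (3) by reading left and right divisibility off Lemma \ref{xxlem3.7}(7,8), and part (4) by identifying $G_0$ with $G$ so that \eqref{E3.7.3} coincides with \eqref{E2.11.1}. You merely spell out the specializations that the paper leaves as citations, and your bookkeeping of which of (7)/(8) gives left versus right factors is accurate.
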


\begin{proof} (1)
By Lemma \ref{xxlem3.7}(9), ${\mathsf j}_{A,H}=f_{\hdet^{-1}}$
is a normal element in $A_G$. The assertions follow from 
Lemma \ref{xxlem3.4}(2,4) by setting $g=\hdet^{-1}$.

(2) This is Lemma \ref{xxlem3.7}(7,8). 

(3) This follows from Lemma \ref{xxlem3.7}(7,8).

(4) This is clear.
\end{proof}

Now we are to prove Theorems \ref{xxthm0.2}, \ref{xxthm0.6} 
and \ref{xxthm0.8}. 

\begin{proof}[Proof of Theorem \ref{xxthm0.2}]
(1,2) This is Corollary \ref{xxcor2.5}(1).

(3) This is Theorem \ref{xxthm3.8}(1).
\end{proof}

Theorem \ref{xxthm0.6} is a consequence of Lemma \ref{xxlem3.3}(1)
and Theorem \ref{xxthm3.8}(2). The next theorem is 
Theorem \ref{xxthm0.8}.

\begin{theorem}
\label{xxthm3.9} 
Assume Hypotheses \ref{xxhyp0.1}. Suppose $R$ is Auslander 
regular. Then $A_G$ is AS Gorenstein and ${\mathsf j}_{A,H}
=_{\Bbbk^{\times}} {\mathsf j}_{A_G, (\Bbbk G)^{\ast}}$ in $A_G$.
\end{theorem}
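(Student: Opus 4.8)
The plan is to put an action of $(\Bbbk G)^{\ast}$ (equivalently, a $G$-grading) on $A_G$ and to recognize the desired identity as the equality of two Jacobians sharing the same fixed ring $R$. Since each central idempotent $p_g$ ($g\in G$) sends the subalgebra $A_G=\bigoplus_{g\in G}A_g$ to itself, $(\Bbbk G)^{\ast}$ acts on $A_G$ with graded pieces $(A_G)_g=A_g=f_gR=Rf_g$ and fixed subring $(A_G)^{(\Bbbk G)^{\ast}}=A_{1_G}=p_1\cdot A=A^H=R$. Under Hypotheses \ref{xxhyp0.1} we have Hypotheses \ref{xxhyp3.2} (Lemma \ref{xxlem3.3}(1)), so $A$ and $A_G$ are finitely generated free modules over $R$ on both sides, and $R$ is AS regular (here even Auslander regular). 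Thus the theorem reduces to two claims: (A) $A_G$ is AS Gorenstein, and (B) the homological determinant $\hdet'$ of the $(\Bbbk G)^{\ast}$-action on $A_G$ equals $\hdet$ as an element of $G$. Granting these, Theorem \ref{xxthm2.4}(2) applies to $(A_G,(\Bbbk G)^{\ast})$ and gives ${\mathsf j}_{A_G,(\Bbbk G)^{\ast}}=_{\Bbbk^{\times}}f_{(\hdet')^{-1}}=f_{\hdet^{-1}}=_{\Bbbk^{\times}}{\mathsf j}_{A,H}$.

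For (A) I would argue through local cohomology, mimicking the proof of Theorem \ref{xxthm2.4}(3). Because $A$ is finite over $R$, local cohomology of any finite $R$-bimodule may be computed over $R$, and the $A_G$-bimodule splitting $A=A_G\oplus A_{G^c}$ of Definition \ref{xxdef1.9}(2) yields ${\rm H}^i_{\fm}(A)={\rm H}^i_{\fm}(A_G)\oplus {\rm H}^i_{\fm}(A_{G^c})$. As $A$ is AS regular, ${\rm H}^i_{\fm}(A)=0$ for $i\neq d$ (where $d=\injdim A$), whence ${\rm H}^i_{\fm}(A_G)=0$ for $i\neq d$. For the top piece, write ${\rm H}^d_{\fm}(A)^{\ast}=(\Bbbk{\mathfrak e})\otimes {^{\mu}A^1}$ as in Lemma \ref{xxlem1.10}, with $\mu=\mu_A$. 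A key step is that $\mu$ preserves the decomposition $A=A_G\oplus A_{G^c}$: since $\hdet$ is grouplike, $\hdet(I_{com})=0$ and the winding automorphisms $\Xi^l_{\hdet},\Xi^r_{\hdet}$ preserve the Hopf ideal $I_{com}$, so the intertwining relation of Theorem \ref{xxthm2.4}(1) forces $\mu(A_G)=A_G$ and $\mu(A_{G^c})=A_{G^c}$. Consequently $(\Bbbk{\mathfrak e})\otimes A_G$ is an $A_G$-sub-bimodule of ${\rm H}^d_{\fm}(A)^{\ast}$, it is the summand ${\rm H}^d_{\fm}(A_G)^{\ast}$, and it is free of rank one over $A_G$ on both sides, namely ${\rm H}^d_{\fm}(A_G)^{\ast}\cong {^{\mu|_{A_G}}(A_G)^1}(-\bfl_A)$. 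By \cite[Lemma 1.6]{KKZ3} together with \cite[Lemma 1.7]{RRZ2}, $A_G$ is AS Gorenstein, finite injective dimension following by change of rings from $R$ exactly as in Lemma \ref{xxlem3.3}(2).

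For (B) I would exploit that $A_G$ is connected graded and a domain. By Lemma \ref{xxlem1.10} applied to the $(\Bbbk G)^{\ast}$-action on $A_G$, ${\rm H}^d_{\fm}(A_G)^{\ast}=(\Bbbk {\mathfrak e}_{A_G})\otimes {^{\mu_{A_G}}(A_G)^1}$, where by \eqref{E2.4.2} the generator ${\mathfrak e}_{A_G}\otimes 1$ is a $(\Bbbk G)^{\ast}$-eigenvector of $G$-degree $\hdet'$. On the other hand, the identification in (A) exhibits ${\mathfrak e}\otimes 1$ as a free generator of the same rank-one right $A_G$-module, of $G$-degree $\hdet$ by \eqref{E2.4.2} for $A$ (recall $\hdet$ factors through $H_{ab}=(\Bbbk G)^{\ast}$). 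Two homogeneous free generators of a rank-one free module over the connected graded domain $A_G$ differ by a unit, necessarily a nonzero scalar; scalars have $G$-degree $1_G$, so comparing $(\Bbbk G)^{\ast}$-eigencharacters gives $\hdet'=\hdet$. Hence $A_{\hdet^{-1}}=(A_G)_{(\hdet')^{-1}}$ is generated over $R$ by the same element, and ${\mathsf j}_{A,H}=_{\Bbbk^{\times}}{\mathsf j}_{A_G,(\Bbbk G)^{\ast}}$.

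I expect the main obstacle to be the bookkeeping in (A): verifying that the $A_G$-bimodule summand $(\Bbbk {\mathfrak e})\otimes A_G$ of ${\rm H}^d_{\fm}(A)^{\ast}$ really coincides with ${\rm H}^d_{\fm}(A_G)^{\ast}$, so that the top local cohomology of $A_G$ is free of rank one. This rests on the $\mu$-stability of the grading decomposition and on the $H$-equivariance of the isomorphism ${\rm H}^d_{\fm}(A)^{\ast}\cong(\Bbbk{\mathfrak e})\otimes {^{\mu}A^1}$, which ensures the splitting by the central idempotents $\sum_{g\in G}p_g$ is respected. Once this compatibility is secured, step (B) is essentially formal, turning only on the connected-graded rigidity of free generators.
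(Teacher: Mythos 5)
Your proposal takes a genuinely different route from the paper's, and while much of it is sound, there is a gap at the decisive step of part (A). The paper does not compute the local cohomology of $A_G$ directly. Instead it verifies that $A_G$ is AS Cohen--Macaulay (being free over the AS regular, hence AS Cohen--Macaulay, ring $R$), checks the technical hypotheses of the Gorensteinness criterion \cite[Theorem 6.1]{JZ} --- and this is precisely where the Auslander regularity of $R$ is used, to verify hypothesis $(2^\circ)$ via \cite[Proposition 5.7]{JZ} --- and then deduces AS Gorensteinness from the Hilbert series functional equation $h_{A_G}(t)=\pm t^m h_{A_G}(t^{-1})$, which follows from $R$ being AS regular and $A_G/(R_{\geq 1})$ being Frobenius [Lemma \ref{xxlem3.7}(10)]. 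For part (B) the paper also argues differently: by Lemma \ref{xxlem3.7}(7,8) the generator $f_{\hdet^{-1}}$ is characterized as the one of highest degree among the $f_g$, for the $H$-action on $A$ and for the $(\Bbbk G)^{\ast}$-action on $A_G$ alike, whence $f'_{\hdet^{-1}}=f_{\hdet^{-1}}$. Your eigencharacter comparison of free generators is a legitimate alternative for (B), and your identification of $(\Bbbk\mathfrak{e})\otimes A_G$ with ${\rm H}^d_{\fm}(A_G)^{\ast}$, together with its rank-one freeness via $\mu(A_G)=A_G$, is correct (it is essentially the content of Lemma \ref{xxlem3.7}).

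The gap is in passing from ``${\rm H}^i_{\fm}(A_G)=0$ for $i\neq d$ and ${\rm H}^d_{\fm}(A_G)^{\ast}\cong{}^{\mu|_{A_G}}(A_G)^1(-\bfl)$'' to ``$A_G$ is AS Gorenstein.'' The missing ingredient is condition (a) of Definition \ref{xxdef1.1}: finite injective dimension of $A_G$ (equivalently, the existence of a balanced dualizing complex for $A_G$, which is what \cite[Lemma 1.6]{KKZ3} implicitly rests on). Your proposed fix --- ``finite injective dimension following by change of rings from $R$ exactly as in Lemma \ref{xxlem3.3}(2)'' --- does not work: that argument bounds $\Ext^p_B(B\otimes_R M, B)$ using freeness of $B$ over $R$, and in Lemma \ref{xxlem3.3}(2) one concludes only because the \emph{ambient} algebra $A$ is free over the subring whose injective dimension is sought and that subring is a bimodule direct summand of $A$. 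To run the same argument for $A_G$ you would need $A$ to be free (or at least projective) over $A_G$, which is not among the hypotheses and does not follow from freeness of $A$ and $A_G$ over $R$; freeness of $A_G$ over $R$ only controls $\Ext_{A_G}$ on induced modules $A_G\otimes_R M$, and dimension shifting from there never terminates. It is telling that your argument nowhere uses the Auslander regularity of $R$, which is exactly the hypothesis the paper introduces to close this point. Your approach could likely be repaired by invoking the existence of a balanced dualizing complex for the noetherian connected graded algebra $A_G$, finite on both sides over the AS regular ring $R$ (via \cite{VdB, YZ}), but that step must be supplied explicitly; as written, the AS Gorenstein conclusion is not established.
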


\begin{proof} 
By Hypotheses \ref{xxhyp0.1}, $A$ is a domain, and hence so is $A_G$.

Since $R$ is AS regular, it is trivially AS Cohen-Macaulay in 
the sense of \cite[Definition 0.1]{JZ}. Since $A_G$ is a finitely
generated free module over $R$, it  also is AS Cohen-Macaulay.
Therefore the hypotheses of \cite[Theorem 6.1(1$^\circ$)]{JZ} hold, and the
hypotheses of \cite[Theorem 6.1(3$^\circ$)]{JZ} hold because
$R$ is AS regular, see \cite[Proposition 5.5]{JZ}. By the proof 
of \cite[Proposition 5.7]{JZ}, using the fact that $R$ is 
Auslander regular, we see that the hypotheses of \cite[Theorem 6.1(2$^\circ$)]{JZ} hold.
Combining the facts that $R$ is AS regular and $A_G/(R_{\geq 1})$
is Frobenius [Lemma \ref{xxlem3.7}(10)], we obtain that
the Hilbert series of $A_G$ satisfies
$$h_{A_G}(t)=\pm t^m h_{A_G}(t^{-1}).$$
Now the first assertion follows from \cite[Theorem 6.1]{JZ}.

For the second assertion, note that $B:=A_G$ satisfies Hypotheses
\ref{xxhyp3.2}. It is clear that $B^{(\Bbbk G)^{\ast}}=A^H=R$. 
Let $f'_{g}$ be the generator of $B_{g}$ as defined  in \eqref{E1.6.1}.
Then $f'_g=f_g$ for all $g\in G$. By Lemma \ref{xxlem3.7}(7), both
$f'_{\hdet^{-1}}$ and $f_{\hdet^{-1}}$ (with different meanings 
of $\hdet^{-1}$) have the highest degree among
$\{f'_g\mid g\in G\}$ and $\{f_g\mid g\in G\}$. Thus 
$f'_{\hdet^{-1}}=f_{\hdet^{-1}}$. This is equivalent to 
${\mathsf j}_{A_G, (\Bbbk G)^{\ast}}={\mathsf j}_{A,H}$ by 
definition.
\end{proof}

Next we prove Theorem \ref{xxthm0.7}. The
discriminant has been an important tool in number theory 
and algebraic geometry for many years. The discriminant of 
a reflection group is a fundamental invariant of 
 reflection group actions. Next we will compare the 
$H$-discriminant in the noncommutative case 
[Definition \ref{xxdef3.1}(3)] to the noncommutative 
discriminant over a central subalgebra, which was used in 
recent studies of automorphism groups and locally nilpotent 
derivations \cite{BZ, CPWZ1, CPWZ2}. 

If $I$ is an ideal of a commutative ring, let $\sqrt{I}$
denote the prime radical ideal of $I$.

\begin{theorem}
\label{xxthm3.10}
Assume Hypotheses \ref{xxhyp0.1}. Further assume that  
\begin{enumerate}
\item[(a)]
${\rm{char}}\; \Bbbk=0$,
\item[(b)]
$H=(\Bbbk G)^\ast$, and
\item[(c)]
$R:=A^H$ is central in $A$. 
\end{enumerate}
Let $dis(A/R)$ be the discriminant defined in \cite[Definition 1.3(3)]{CPWZ1}.
Then 
\begin{enumerate}
\item[(1)]
$dis(A/R)=_{\Bbbk^{\times}} \prod_{g\in G} (f_{g^{-1}} f_{g})$.
\item[(2)]
$$\sqrt{(dis(A/R))}=\sqrt{(\Delta_{A,H})}=\sqrt{(\delta_{A,H})}$$
as ideals of $R$.
\end{enumerate}
\end{theorem}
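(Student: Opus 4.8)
The plan is to reduce everything to the $G$-graded structure $A=\bigoplus_{g\in G}A_g$ with $A_g=f_gR=Rf_g$ free of rank one over $R$ (Lemma \ref{xxlem1.5}(6)), exploiting that here $R$ is a \emph{commutative} domain, being a central subalgebra of the domain $A$. First I would record the structural facts I need: since the $H$-action is inner-faithful one has $G_0=G$, so $\{f_g\}_{g\in G}$ is an $R$-basis of the free module $A$; moreover $f_e=1$, and writing $m:=\hdet^{-1}$ we have $\mathsf{a}_{A,H}=f_{m^{-1}}$, $\mathsf{j}_{A,H}=f_m$, and $\delta_{A,H}=_{\Bbbk^{\times}}f_{m^{-1}}f_m$.

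For part (1), I would compute $dis(A/R)$ directly from its definition in \cite[Definition 1.3(3)]{CPWZ1} as, up to $\Bbbk^{\times}$, the determinant $\det(\tr(f_gf_h))_{g,h\in G}$, where $\tr\colon A\to R$ is the regular trace, i.e. the trace of left multiplication viewed as a right $R$-linear endomorphism of $A$. Using $f_gf_h=c_{g,h}f_{gh}$ \eqref{E1.6.3}, the operator $\ell_{f_k}$ sends the basis element $f_j$ to $c_{k,j}f_{kj}$, so its diagonal entries vanish unless $k=e$; hence $\tr(f_k)=0$ for $k\neq e$ and $\tr(1)=|G|$. Consequently $\tr(f_gf_h)=c_{g,h}\,\tr(f_{gh})$ is nonzero only when $h=g^{-1}$, in which case it equals $|G|\,c_{g,g^{-1}}=|G|\,f_gf_{g^{-1}}$. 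Thus $(\tr(f_gf_h))$ is a generalized permutation matrix attached to the involution $g\mapsto g^{-1}$, and its determinant is a sign times $|G|^{|G|}\prod_{g\in G}f_gf_{g^{-1}}$. Since $\ch\Bbbk=0$ the scalar is a unit, and reindexing $g\mapsto g^{-1}$ gives $dis(A/R)=_{\Bbbk^{\times}}\prod_{g\in G}f_{g^{-1}}f_g$, proving (1).

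For part (2), the equality $\sqrt{(\Delta_{A,H})}=\sqrt{(\delta_{A,H})}$ is immediate from Theorem \ref{xxthm3.5}(2), which gives $\Delta_{A,H}=(\delta_{A,H})$. It remains to prove $\sqrt{(dis(A/R))}=\sqrt{(\delta_{A,H})}$ in the commutative domain $R$, which I would do by two containments. In one direction, $\delta_{A,H}=_{\Bbbk^{\times}}f_{m^{-1}}f_m$ is exactly the factor indexed by $g=m$ in the product of part (1); as all factors $f_{g^{-1}}f_g$ lie in the commutative ring $R$, we get $dis(A/R)\in(\delta_{A,H})$, hence $\sqrt{(dis(A/R))}\subseteq\sqrt{(\delta_{A,H})}$. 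For the reverse, fix $g\in G$ and use Lemma \ref{xxlem3.7}(7), namely $f_m=_{\Bbbk^{\times}}f_{mg^{-1}}f_g$, to write $\delta_{A,H}=_{\Bbbk^{\times}}(f_{m^{-1}}f_{mg^{-1}})f_g$; since $f_{m^{-1}}f_{mg^{-1}}\in A_{g^{-1}}=Rf_{g^{-1}}$, this equals $r_g\,f_{g^{-1}}f_g$ for some $r_g\in R$, so $\delta_{A,H}\in(f_{g^{-1}}f_g)$. As this holds for every $g$, we obtain $\delta_{A,H}\in\bigcap_{g\in G}(f_{g^{-1}}f_g)\subseteq\bigcap_{g\in G}\sqrt{(f_{g^{-1}}f_g)}=\sqrt{\prod_{g\in G}(f_{g^{-1}}f_g)}=\sqrt{(dis(A/R))}$, giving $\sqrt{(\delta_{A,H})}\subseteq\sqrt{(dis(A/R))}$ and hence equality.

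I expect the only delicate point to be matching my use of the regular trace to the precise definition of $dis(A/R)$ in \cite[Definition 1.3(3)]{CPWZ1} (left versus right regular trace, and the fact that the discriminant is only well-defined up to the square of a unit). This turns out to be harmless: both the left and the right regular traces yield the same matrix $(\tr(f_gf_h))$ on the basis $\{f_g\}$, and since the units of the connected graded domain $R$ are exactly $\Bbbk^{\times}$, the ambiguity is absorbed into $=_{\Bbbk^{\times}}$. Everything else is a formal consequence of the $G$-graded freeness of $A$ over the central commutative ring $R$ together with the divisibility relation of Lemma \ref{xxlem3.7}(7).
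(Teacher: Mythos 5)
Your proposal is correct and follows essentially the same route as the paper: part (1) is the identical computation of $dis(A/R)$ as the determinant of the generalized permutation matrix $(\tr(f_gf_h))$ attached to $g\mapsto g^{-1}$, and the first containment in part (2), together with the reduction $\Delta_{A,H}=(\delta_{A,H})$ via Theorem \ref{xxthm3.5}, matches the paper. The only real divergence is the reverse containment $\sqrt{(\delta_{A,H})}\subseteq\sqrt{(dis(A/R))}$: the paper argues from ``every $f_g$ divides $f_m$ on both sides'' (via the Frobenius covariant algebra in the proof of Lemma \ref{xxlem2.11}) that each $f_{g^{-1}}f_g$ divides a power of $f_m$, hence $dis(A/R)$ divides $\delta_{A,H}^{r^2}$; you instead invoke Lemma \ref{xxlem3.7}(7) to factor $\delta_{A,H}=_{\Bbbk^\times}(f_{m^{-1}}f_{mg^{-1}})f_g\in(f_{g^{-1}}f_g)$ for every $g$ and then use the standard identity $\bigcap_g\sqrt{I_g}=\sqrt{\prod_g I_g}$ in the commutative ring $R$. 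Your version is slightly cleaner and avoids juggling powers of $f_m$ that may leave $R$, at the cost of relying on the local-cohomology-based Lemma \ref{xxlem3.7} rather than the more elementary covariant-algebra argument; both inputs are available under the stated hypotheses, so the proof is complete.
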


\begin{proof} (1) Since $A=\oplus_{g\in G} f_{g} R$,
$A$ can be embedded into the matrix algebra $M_r(R)$
by the left multiplication, where $r=|G|$. For each $g$,
the left multiplication  by $f_{g}$ is 
$$l_{f_g}: f_{h}\mapsto f_g f_h =c_{g,h} f_{gh} \text{ for } 
c_{g,h} \in R \hspace{.4in}  \mbox{(see (E1.6.2))}.$$
If $g\neq e$, $gh\neq g$, then the regular trace of $f_g$
\cite[Example 1.2(3)]{CPWZ1}, denoted by $tr(f_{g})$, is zero. As a 
consequence, we have
\begin{equation}
\label{E3.10.1}\tag{E3.10.1}
tr(f_g f_h)=tr(c_{g,h} f_{gh})=\begin{cases} 0& gh\neq e,\\
c_{h^{-1},h}=f_{h^{-1}}f_{h} & g=h^{-1}.\end{cases}
\end{equation}
By \cite[Definition 1.3(3)]{CPWZ1}, the discriminant
$dis(A/R)$ is the determinant of the matrix 
$$\left( tr(f_{g}f_{h}) \right)_{G\times G}.$$
Using \eqref{E3.10.1}, every row (and every column) contains only 
one nonzero entry, namely, $f_{h^{-1}}f_{h}$. Hence, we have
$$dis(A/R)=_{\Bbbk^{\times}}\prod_{h\in G} f_{h^{-1}}f_{h}.$$
(As an example, note that in Example \ref{xxex2.2}(2) 
$$\prod_{h\in G} f_{h^{-1}}f_{h}=(x^2)(y^2)(z^2)(xyzy)(xzxz)(zyxy)(xzyxzy) 
=_{\Bbbk^{\times}}z^8x^8y^8.)$$

(2) By Theorem \ref{xxthm3.5}, $\Delta_{A,H}$ is the principal 
ideal of $R$ generated by $\delta_{A,H}$. Hence $\Delta_{A,H}=
(\delta_{A,H})$, and it remains to show that 
$\sqrt{(dis(A/R))}=\sqrt{(\delta_{A,H})}$. 

Since $\delta_{A,H}= f_{m^{-1}} f_{m}$, by part (1), 
$\delta_{A,H}$ divides $dis(A/R)$. 
By the proof of Lemma \ref{xxlem2.11}, every $f_{g}$ divides
$f_{m}$ from the left and the right. Hence there are $a,b\in A$
such that $af_{g^{-1}}f_g b=f_{m}^2$. Since $f_{g^{-1}}f_{g}$ 
is in the central subring $R$, we have $f_{g^{-1}}f_{g}ab=f_{m}^2$.
This implies that $f_{g^{-1}}f_{g}$ divides $f_{m}^r\in R$. 
(Note that $f_{m}^2\not\in R$ in general.)
As a consequence, $dis(A/R)$ divides $(f_{m}^{r})^{r}$. 
Finally $(f_{m}^{r})^{r}$ divides $\delta_{A,H}^{r^2}$. 
Therefore
$$\sqrt{(\delta_{A,H})}=\sqrt{dis(A/R)}
=\sqrt{f_{m}^r}$$
as desired.
\end{proof}

Theorem \ref{xxthm0.7} is  Theorem \ref{xxthm3.10}(2).
Note that there are many examples where $R$ is not central in $A$,
even when $R$ is a commutative polynomial ring [Example \ref{xxex4.2}].
Without the hypothesis of $H=(\Bbbk G)^{\ast}$, it is 
easy to construct examples where
$$\sqrt{(\Delta_{A,H})}\neq \sqrt{(\delta_{A,H})},$$
see \eqref{E4.2.14}.

\begin{remark}
\label{xxrem3.11} 
Suppose $H$ is a semisimple Hopf algebra.
\begin{enumerate}
\item[(1)]
Let $G(K)$ be the group of all grouplike elements in $K$.
In general, $\Bbbk G(K)$ is NOT a normal Hopf subalgebra
\cite{Ma1}. 
\item[(2)]
One could ask if $\Bbbk G(K)$ is a normal Hopf subalgebra
under Hypothesis \ref{xxhyp0.1}. This is related to Question
\ref{xxque0.9}.
\item[(3)]
If $\Bbbk G(K)$ is a normal Hopf subalgebra,
then there is a short exact sequence of Hopf algebras
\begin{equation}
\label{E3.11.1}\tag{E3.11.1}
1\to \Bbbk G(K)\to K \to K_0\to 1
\end{equation}
where $K_0=K/(\Bbbk G(K))_{+}$.
There is a dual short exact sequence
\begin{equation}
\label{E3.11.2}\tag{E3.11.2}
1\to H_0 \to H \to (\Bbbk G(K))^\ast\to 1
\end{equation}
where $H_0=(K_0)^{\ast}$. If we further 
assume Hypotheses \ref{xxhyp0.1}, then $A_G=A^{H_0}$ and
Question \ref{xxque0.9} has a positive answer under these 
extra hypotheses. 
\end{enumerate}
\end{remark}

We end this section by providing some results that can be used to compute
the radical ideal of the $H$-action, particularly when $A$ has dimension 2.

\begin{definition}
\label{yydef3.12} Let $H$ be a semisimple Hopf algebra acting on $A$.
\begin{enumerate}
\item[(1)]
If $$\Delta(\inth)=\Delta(p_1)=\sum_{h\in G(K)}
p_{h}\otimes p_{h^{-1}} +X_1$$
where $X_1\in I_{com}\otimes I_{com}$, see \eqref{E1.3.5}, then 
$H$ is called {\it rife}. 
\item[(2)]
Assume Hypotheses \ref{xxhyp0.1}. We say the $H$-action is 
{\it rife} if 
\begin{enumerate}
\item[(a)]
$H$ is rife.
\item[(b)]
${\mathsf j}_{A,H}$ is normal in $A$.
\item[(c)]
the radical ideal  of the $H$-action ${\mathfrak r}_{A,H}$ is generated 
by ${\mathsf j}_{A,H}$.
\end{enumerate}
\end{enumerate}
\end{definition}

By Theorem \ref{xxthm2.12}(1), when $H$ is $(\Bbbk G)^{\ast}$, then 
the $H$-action is rife. Otherwise, the $H$-action may not be rife,
even when $H$ is rife [Example \ref{xxex4.2}].

\begin{lemma}
\label{yylem3.13}
Assume Hypotheses \ref{xxhyp0.1}. If $H$ is rife, 
then ${\mathfrak r}_{A,H}$ is a subspace of $A {\mathsf j}_{A,H}$.
\end{lemma}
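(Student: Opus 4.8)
The plan is to compute ${\mathcal P}_{A,H}\cap A$ explicitly, generalizing the calculation of Lemma \ref{xxlem2.10} from the case $H=(\Bbbk G)^{\ast}$ to an arbitrary rife $H$, and then to read off the conclusion by projecting onto the grouplike blocks of $H$. First I would simplify the left factor of the pertinency ideal: since $p_1=\inth$ is a two-sided integral and a central idempotent, the smash-product rule gives $(a\# h)(1\# p_1)=\sum a\,\epsilon(h_1)\# h_2 p_1 = a\# hp_1=\epsilon(h)\,(a\# p_1)$, so that $(A\# H)(1\# \inth)=A\# p_1$. Hence every element of ${\mathcal P}_{A,H}$ has the form $\sum_t (a_t\# p_1)(b_t\# k_t)$, and expanding the products gives $\sum_t\sum a_t((p_1)_1\cdot b_t)\#(p_1)_2 k_t$.

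Next I would substitute the rife comultiplication $\Delta(p_1)=\sum_{h\in G(K)} p_h\otimes p_{h^{-1}}+X_1$ with $X_1\in I_{com}\otimes I_{com}$, and use the block decomposition $A\# H=\bigoplus_{i=1}^{N} A\otimes M_{r_i}(\Bbbk)$ coming from \eqref{E1.3.1}. Each block $M_{r_i}(\Bbbk)$ is a two-sided ideal, so the projection onto $A\otimes M_{r_i}(\Bbbk)$ is induced by left multiplication of the $H$-factor by the central idempotent $p_i$. For a grouplike block, i.e. $p_i=p_g$ with $g\in G:=G(K)$, the crucial point is that $p_g X_1^{(2)}=0$, because $X_1^{(2)}\in I_{com}$ is orthogonal to the idempotents $p_1,\dots,p_n$; this is the only place the rife hypothesis enters. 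Computing $\sum (p_1)_1\otimes p_g (p_1)_2=p_{g^{-1}}\otimes p_g$, the block-$g$ component of $(a\# p_1)(b\# k)$ collapses to $a(p_{g^{-1}}\cdot b)\otimes p_g k$, and since $M_{r_i}(\Bbbk)=\Bbbk p_g$ and $g(p_g)=1$ we have $p_g k=g(k)\,p_g$, so this component equals $g(k)\,a(p_{g^{-1}}\cdot b)\otimes p_g$.

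Finally, writing a typical element of ${\mathfrak r}_{A,H}$ as $y\# 1=y\otimes\sum_{i} p_i=\sum_t (a_t\# p_1)(b_t\# k_t)$ and equating block-$g$ components (the left side contributes $y\otimes p_g$) yields $y=\sum_t g(k_t)\,a_t\,(p_{g^{-1}}\cdot b_t)$. Since $p_{g^{-1}}\cdot b_t\in A_{g^{-1}}$, this shows $y\in A\,A_{g^{-1}}$ for every $g\in G$. Specializing to $g=\hdet$ and using $A\,A_{\hdet^{-1}}=A\,f_{\hdet^{-1}}=A\,{\mathsf j}_{A,H}$ gives $y\in A\,{\mathsf j}_{A,H}$, which is the assertion. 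I expect the main obstacle to be the careful bookkeeping of the block decomposition of the smash product, precisely so as to verify that the correction term $X_1$ contributes nothing in the grouplike blocks; once rife is invoked to kill $p_g X_1^{(2)}$, the remainder is a direct generalization of the computation in Lemma \ref{xxlem2.10}, and notably the single specialization $g=\hdet$ already suffices, so no intersection over all of $G$ is needed.
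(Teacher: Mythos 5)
Your proposal is correct and follows essentially the same route as the paper: both expand $(A\#H)(1\#\inth)(A\#H)=(A\#\inth)(A\#H)$, use the rife condition to see that the $X_1$ term dies in each grouplike block (its second leg lies in $I_{com}$, which is annihilated by $p_g$), and conclude that an element of ${\mathfrak r}_{A,H}$ lies in $AA_{g^{-1}}=Af_{g^{-1}}$ for each $g\in G(K)$, finishing with $g=\hdet$. Your block projection is literally the paper's right multiplication by $1\#p_g$ (as $p_g$ is central in $H$), so the only cosmetic difference is that the paper records the full intersection $\bigcap_g Af_g$ before specializing.
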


\begin{proof} For each $g\in G(K)$, since $H$ is rife, we have
$$\begin{aligned}
(A\# \smallint) (A\# p_{g}) &= 
(A\# 1) (\sum_{h\in G(K)} p_{h^{-1}} A \# p_{h} p_{g} 
+ X_1\cdot (A \# p_{g}))\\
&= (A\# 1) (A_{g^{-1}}\# p_{g})\\
&= AA_{g^{-1}} \# p_{g}\\
&= A f_{g^{-1}} \# p_g
\end{aligned}
$$
If $x\in {\mathfrak r}_{A,H}$, then $x\# 1\in (A\# H)(A\#\inth)(A\# H)$.
Multiplying $1\# p_g$ from the right, $x\# p_g\in (A\# H)(A\#\inth)(A\# p_g)$.
By computation,
$$
(A\# H)(A\#\inth)(A\# p_g)=(A\#\inth)(A\# p_g)=Af_{g^{-1}} \# p_{g},
$$
which implies that $x\in Af_{g^{-1}}$ for all $g\in G(K)$. 
Hence $x\in \bigcap_{g\in G(K)} A f_{g}$, which is a
subspace of $Af_{\hdet^{-1}}=A{\mathsf j}_{A,H}$. 
\end{proof}

For every (left) ideal $I$ in a noetherian algebra $A$, 
let $\overline{I}$ denote the largest ideal containing 
$I$ such that $\overline{I}/I$ is finite dimensional.
The following lemma is well-known.

\begin{lemma}
\label{yylem3.14}
Let $A$ be AS regular of global dimension two.
{\rm{(}}So $A$ is noetherian.{\rm{)}}
Let $I$ be a nonzero graded two-sided ideal. If 
$\overline{I}=I$, then $I$ is a principal
ideal generated by a normal element.  In particular, 
$\overline{I}$ is always a principal ideal generated 
by a normal element. 
\end{lemma}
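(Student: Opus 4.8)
The plan is to translate the hypothesis $\overline I = I$ into a depth condition on $A/I$, then use the Auslander--Buchsbaum formula to deduce that $I$ is projective, hence free of rank one on each side, which forces a normal principal generator.

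First I would identify $\overline I/I$ homologically. Because $A/I$ is noetherian, its largest finite-dimensional graded submodule coincides with its $\fm$-torsion submodule $\Gamma_{\fm}(A/I)$. A one-line check shows this submodule is two-sided: if $A_{\geq n}m=0$ and $r\in A$, then $A_{\geq n}(mr)=(A_{\geq n}m)r=0$, so left $\fm$-torsion is stable under right multiplication (and symmetrically). Hence $\Gamma_{\fm}(A/I)=\overline I/I$, and the hypothesis $\overline I=I$ is exactly $\Gamma_{\fm}(A/I)={\text{H}}^0_{\fm}(A/I)=0$, i.e. $\depth(A/I)\geq 1$. I may discard the trivial case $I=A$, where $I=1\cdot A$ is principal on the normal generator $1$.

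Next I would compute $\pdim(A/I)$. Since $A$ is AS regular of global dimension two, the noncommutative Auslander--Buchsbaum identity $\pdim M+\depth M=\gldim A=2$ holds for every nonzero finitely generated graded module $M$; applied to $M=A/I$ this gives $\pdim(A/I)=2-\depth(A/I)\leq 1$. On the other hand $A$ is a domain and $I\neq 0$, so $A/I$ is a torsion module of rank zero and cannot be projective (a nonzero finitely generated graded projective module over the connected graded ring $A$ is free, hence of positive rank); therefore $\pdim(A/I)\geq 1$, and so $\pdim(A/I)=1$.

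Finally I would read off the generator. Splicing $0\to I\to A\to A/I\to 0$ shows $I$ is projective, hence free; as a nonzero right (resp. left) ideal of the Ore domain $A$ it has rank one, whence $I=fA$ and $I=Ag$ for homogeneous $f,g$. Then $f\in Ag$ and $g\in fA$ give $f=_{\Bbbk^{\times}}g$ in the domain $A$, so $I=fA=Af$ with $f$ normal. For the closing assertion, the closure is idempotent, $\overline{\overline I}=\overline I$, so $\overline I$ satisfies the hypothesis just proved and is therefore principal on a normal generator. The main obstacle, and the only point genuinely using regularity in dimension two rather than mere domain-hood, is the invocation of the graded Auslander--Buchsbaum formula $\pdim+\depth=\gldim$ for $A$; I would cite the standard version for AS regular algebras (equivalently, use that $A$ is Auslander regular and Cohen--Macaulay).
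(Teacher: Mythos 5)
Your proof is correct and follows essentially the same route as the paper's: both translate $\overline I=I$ into ${\text{H}}^0_{\fm}(A/I)=0$, apply the (graded, noncommutative) Auslander--Buchsbaum formula of J{\o}rgensen to get $\pdim(A/I)=1$, deduce that $I$ is free of rank one on each side, and compare the two generators to obtain normality. The only differences are cosmetic: you spell out the idempotence of the closure and the exclusion of $I=A$, which the paper leaves implicit.
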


\begin{proof} Since $\overline{I}=I$, 
$A/I$ is $\fm$-torsionfree, so
${\text{H}}^0_{\fm}(M)=0$, see definition in Section
\ref{xxsec1}. By Auslander-Buchsbaum 
formula \cite[Theorem 3.2]{Jo}, the left $A$-module 
$A/I$ has projective dimension at most 1. Since
$A/I$ is not projective, it has projective
dimension one. Consequently, the left $A$-module
$I$ is projective. Since $A$ is connected graded,
$I$ is free (of rank one). Thus $I= A x$ for some 
homogeneous element $x\in A$. By symmetry, $I=y A$
for some homogeneous element $y$. Then 
$Ax=yA$ implies that $x=_{\Bbbk^{\times}} y$. 
Thus $x$ is normal and the assertion follows.
\end{proof}

We use Lemma \ref{yylem3.14} to make the following definitions
in the case that $A$ has global dimension two.

\begin{definition}
\label{yydef3.15}
Assume Hypotheses \ref{xxhyp0.1}. Further assume
that $A$ has global dimension two and that the
radical ideal of the $H$-action ${\mathfrak r}_{A,H}$ is nonzero.
\begin{enumerate}
\item[(1)]
Any element that generates the principal ideal 
$\overline{{\mathfrak r}_{A,H}}$ in $A$ is called
a {\it principal radical} of the $H$-action on
$A$, and is denoted by $\widetilde{\mathfrak r}_{A,H}$.
\item[(2)]
Any element that generates the principal ideal 
$\overline{\Delta_{A,H}}$ in $R$ is called
a {\it principal dis-radical} of the $H$-action on
$A$, and is denoted by $\widetilde{\Delta}_{A,H}$.
\end{enumerate}
\end{definition}

Note that the principal radical 
$\widetilde{\mathfrak r}_{A,H}$ is always defined for any 
Hopf algebra $H$ acting on an AS regular algebra $A$ of 
global dimension 2, while the principal dis-radical 
$\widetilde{\Delta}_{A,H}$ is defined only when, in addition, 
$H$ is a reflection Hopf algebra.

\section{Examples}
\label{xxsec4}

When a Hopf algebra $H$ acts on a noetherian AS regular algebra $A$,
there is a list of important invariants that can be studied. Starting
from $A$, we can consider the following data:
\begin{enumerate}
\item[($\bullet$)]
the Nakayama automorphism of $A$, denoted by $\mu$ [Definition \ref{xxdef1.2}].
\item[($\bullet$)]
the AS index of $A$, denoted by ${\mathfrak l}$  [Definition \ref{xxdef1.1}].
\item[($\bullet$)]
the twisted superpotential associated to $A$ 
\cite[Definition 1]{DV} or \cite[p.1502]{BSW}.
\end{enumerate}
For $H$, since we assume that $H$ is semisimple, it is Calabi-Yau
with trivial Nakayama automorphism. When $H$ acts on $A$, 
we can consider:
\begin{enumerate}
\item[($\bullet$)]
the pertinency $\p(A,H)$ [Definition \ref{xxdef2.8}(3)].
\item[($\bullet$)]
the pertinency ideal ${\mathcal P}(A,H)$ [Definition \ref{xxdef2.8}(1)].
\item[($\bullet$)]
the radical ideal ${\mathfrak r}_{A,H}$ [Definition \ref{xxdef2.8}(1)].
In global dimension two case, we can ask for the principal radical
$\widetilde{\mathfrak r}_{A,H}$ [Definition \ref{yydef3.15}(1)].
\item[($\bullet$)]
$H$-dis-radical ideal $\Delta_{A,H}$ [Definition \ref{xxdef3.1}(4)]. 
In global dimension two case, we can ask for the principal dis-radical
$\widetilde{\Delta}_{A,H}$ [Definition \ref{yydef3.15}(2)].
\item[($\bullet$)] the
homological determinant $\hdet$ of the $H$-action on $A$ \cite[Definition 3.3]{KKZ3}.
\item[($\bullet$)] the
fusion rules for $H$, or the McKay quiver for representations
of $H$.
\end{enumerate}
When the fixed subring $A^H$ is AS Gorenstein (or AS regular), we can 
further consider:
\begin{enumerate}
\item[($\bullet$)] the 
Jacobian ${\mathsf j}_{A,H}$ [Definition \ref{xxdef2.1}(1)].
\item[($\bullet$)] the 
reflection arrangement ${\mathsf a}_{A,H}$ [Definition \ref{xxdef2.1}(2)].
\item[($\bullet$)]
${\mathfrak R}^l({\mathsf j}_{A,H})$ and ${\mathfrak R}^l({\mathsf a}_{A,H})$,
see \eqref{E0.9.1}.
\item[($\bullet$)] the 
discriminant $\delta_{A,H}$ [Definition \ref{xxdef3.1}(3)]. 
\end{enumerate}
There are several algebras associated to $(A,H)$: the fixed subring 
$A^H$, the covariant ring $A^{cov\; H}$, the 
$G$-component $A_{G}$, $A/({\mathsf a}_{A,H})$ if ${\mathsf a}_{A,H}$ 
is normal, $R/(\delta_{A,H})$ when $\delta_{A,H}$ is defined. 
If any of the these algebras is AS Gorenstein, we can compute the 
corresponding data in the first two $\bullet$s. 

First we compute the Jacobian
when $A = \Bbbk_{-1}[x,y]$ and $H$ is a group algebra $\Bbbk G$
for some finite group $G$. Let us recall some facts from 
\cite{KKZ2}. We consider two different kinds of automorphisms
of $\Bbbk_{-1}[x,y]$. The first is of the form
\begin{equation}
\label{E4.0.1}\tag{E4.0.1}
\sigma_a: x\mapsto ax, y\mapsto  y, \qquad 
{\text{or}} \qquad \tau_b:  x\mapsto x, y\mapsto ay.
\end{equation}
and the
second one is of the form
\begin{equation}
\label{E4.0.2}\tag{E4.0.2}
\tau_{1,2,\lambda}: x\mapsto \lambda y, y\mapsto -\lambda^{-1} x.
\end{equation} 
Let $\alpha$ and $\beta$ be two positive integers such that 
$\beta$ is divisible by both $2$ and $\alpha$. Let 
$M(2, \alpha,\beta)$ be the subgroup of $\Aut_{gr}(\Bbbk_{-1}[x,y])$
generated by 
$$\{\sigma_{a}\mid a^\alpha=1\} 
\cup  \{\tau_{1,2,\lambda}\mid \lambda^\beta=1\}$$
(see 
\cite{KKZ2} in discussion before \cite[Lemma 5.3]{KKZ2}).
By \cite[Lemma 5.3]{KKZ2}, if $G$ is not generated only by a single
$\sigma_a$ or $\tau_{1,2,a}$ in \eqref{E4.0.1}-\eqref{E4.0.2} and 
$\Bbbk_{-1}[x,y]^G$ is AS regular, then $G\cong M(2, \alpha,\beta)$.
As one example, the groups $M(2, 1, 2 \ell )$ are the binary 
dihedral groups of order $4\ell$ generated by $\tau_{1,2,1}$ 
and $\tau_{1,2,\lambda}$ for $\lambda$ a primitive $2\ell$th 
root of unity, i.e. the representation generated by the two mystic reflections:
$$g_1 = \begin{pmatrix}
0 & 1\\
-1 & 0
\end{pmatrix}
\text{ and } g_2=
\begin{pmatrix}
0 & \lambda\\
-\lambda^{-1} & 0
\end{pmatrix}.$$

\begin{lemma}
\label{xxlem4.1} 
Suppose that $A=\Bbbk_{q}[x,y]$ where $1\neq q\in \Bbbk^{\times}$ 
and that $H =\Bbbk G$ for a finite group $G$. Assume 
Hypotheses \ref{xxhyp0.1}. Then one of the following holds.
\begin{enumerate}
\item[(1)]
$G=\langle \sigma \rangle \times \langle \tau \rangle
\cong C_n \times C_m$ where $\sigma$ and $\tau$ are 
of the form given in \eqref{E4.0.1} and of order $n$ 
and $m$ respectively. In this case ${\mathsf j}_{A,H}
=_{\Bbbk^{\times}} x^{n-1} y^{m-1}$, ${\mathsf a}_{A,H}
=_{\Bbbk^{\times}} x y$ and 
$${\mathfrak R}^l({\mathsf j}_{A,H})
={\mathfrak R}^l({\mathsf a}_{A,H})
={\mathfrak R}^r({\mathsf j}_{A,H})
={\mathfrak R}^r({\mathsf a}_{A,H})
=\{\Bbbk x, \Bbbk y\}.$$
\item[(2)]
$q=-1$ and $G=M(2, \alpha,\beta)$ for $\alpha\geq 2$.  Then
$${\mathsf a}_{A,H}=xy(x^{\beta}-y^{\beta})\quad 
{\text{and}}\quad
{\mathsf j}_{A,H}=x^{\alpha -1}y^{\alpha-1}(x^{\beta}-y^{\beta}).
$$ 
Further,
$${\mathfrak R}^l({\mathsf j}_{A,H})=
{\mathfrak R}^r({\mathsf j}_{A,H})=
{\mathfrak R}^l({\mathsf a}_{A,H})=
{\mathfrak R}^r({\mathsf a}_{A,H})=
\{\Bbbk x,\Bbbk y\} \cup \{\Bbbk (x+\xi y)\mid \xi^{\beta}=1\}.$$
\item[(3)]
$q=-1$ and $G=M(2, 1,\beta)$ {\rm{(}}for $\alpha=1${\rm{)}}. 
Then
$${\mathsf a}_{A,H}=
{\mathsf j}_{A,H}=(x^{\beta}-y^{\beta}).
$$ 
Further,
$${\mathfrak R}^l({\mathsf j}_{A,H})=
{\mathfrak R}^r({\mathsf j}_{A,H})=
{\mathfrak R}^l({\mathsf a}_{A,H})=
{\mathfrak R}^r({\mathsf a}_{A,H})=
\{\Bbbk (x+\xi y)\mid \xi^{\beta}=1\}.$$
\end{enumerate}
\end{lemma}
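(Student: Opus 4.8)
The plan is to reduce the whole statement to an explicit eigenvector computation inside the quantum plane, in three stages: classify $G$, compute $\hdet$, then read off ${\mathsf j}_{A,H}$, ${\mathsf a}_{A,H}$ and their degree-one divisors. First I would pin down $G$ and its action. Since Hypotheses \ref{xxhyp0.1} force $R=A^G$ to be AS regular, $G$ is generated by quasi-reflections of $A=\Bbbk_q[x,y]$. When $q\neq\pm1$ one has $\Aut_{gr}(A)=(\Bbbk^{\times})^2$, so $G$ is diagonal and, being generated by reflections $\sigma_a,\tau_b$ of the form \eqref{E4.0.1}, is the product $C_n\times C_m$ of case (1). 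When $q=-1$ the classification of \cite[Lemma 5.3]{KKZ2} leaves exactly the diagonal groups (again case (1)) and the groups $M(2,\alpha,\beta)$, split according to $\alpha\ge 2$ (case (2)) or $\alpha=1$ (case (3)). Next I would compute $\hdet$ on the generators by letting them act on the one-dimensional relation space $\Bbbk r$, $r=yx-qxy$, which represents $\Tor^A_2(\Bbbk,\Bbbk)$: from $\sigma_a(r)=ar$ and $\tau_b(r)=br$ one gets $\hdet=\det$ on diagonal reflections, whereas for a mystic reflection $\tau_{1,2,\lambda}$ (with $q=-1$) one computes $\tau_{1,2,\lambda}(r)=-r$, so that $\hdet(\tau_{1,2,\lambda})=-1$ even though $\det(\tau_{1,2,\lambda})=1$.

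By Corollary \ref{xxcor2.5}(1) and Definition \ref{xxdef2.1}, ${\mathsf j}_{A,H}=f_{\hdet^{-1}}$ and ${\mathsf a}_{A,H}=f_{\hdet}$ are the lowest-degree generators of $A_{\hdet^{-1}}$ and $A_{\hdet}$, so it remains to find the minimal homogeneous eigenvectors for the characters $\hdet^{\pm1}$. In the abelian case the characters separate the monomial basis, and the minimal monomial of $\sigma$-weight $a^{\mp1}$ and $\tau$-weight $b^{\mp1}$ is $x^{n-1}y^{m-1}$ (resp. $xy$), giving (1). In the $M(2,\alpha,\beta)$ cases, the condition $\sigma_a\cdot f=\hdet(\sigma_a)^{\pm1}f$ for all $a^\alpha=1$ forces the $x$-exponent into a fixed residue class mod $\alpha$, while the condition $\tau_{1,2,\lambda}\cdot f=-f$ for all $\lambda^\beta=1$, using $\tau_{1,2,\lambda}(x^iy^j)=(-1)^{j+ij}\lambda^{i-j}x^jy^i$ and $yx=-xy$, forces $i\equiv j\pmod\beta$, kills the diagonal terms $x^iy^i$, and pairs the coefficient of $x^iy^j$ with that of $x^jy^i$. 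Minimizing the total degree under these congruences then yields $xy(x^{\beta}-y^{\beta})$ and $x^{\alpha-1}y^{\alpha-1}(x^{\beta}-y^{\beta})$ (resp. $x^{\beta}-y^{\beta}$ when $\alpha=1$), which proves the displayed formulas in (2) and (3).

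For the divisor sets I would use that $\Bbbk_q[x,y]$ is a graded twist $B^{\sigma}$ of $B=\Bbbk[x,y]$, with $\sigma=\operatorname{diag}(1,-1)$ when $q=-1$. Because $v\ast g=v\,\sigma(g)$ and $g\ast v=g\,\sigma^{d-1}(v)$ for homogeneous $v$ of degree one and $g$ of degree $d-1$, left- and right-divisibility of a homogeneous element in $A$ translate into ordinary divisibility in the UFD $B$ after applying a power of $\sigma$. I would therefore rewrite each of ${\mathsf j}_{A,H}$ and ${\mathsf a}_{A,H}$ as a vector in $B$ (tracking the signs $\sigma^{t}(y)=(-1)^{t}y$ produced by the twist), factor it into linear forms in $B$, and read off ${\mathfrak R}^l$ and ${\mathfrak R}^r$: the monomial factors contribute $\Bbbk x$ and $\Bbbk y$, and each factor of $x^{\beta}-y^{\beta}$ type splits into the forms $x+\xi y$ over the relevant set of $\beta$-th roots of unity. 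Finally I would observe that $\sigma$ sends $x+\xi y\mapsto x-\xi y$ and fixes $\Bbbk x,\Bbbk y$, hence permutes this set of lines among themselves; this is exactly what forces ${\mathfrak R}^l({\mathsf j}_{A,H})={\mathfrak R}^r({\mathsf j}_{A,H})={\mathfrak R}^l({\mathsf a}_{A,H})={\mathfrak R}^r({\mathsf a}_{A,H})$.

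The main obstacle is this last, divisor-theoretic step. The delicate point is the faithful bookkeeping of the graded twist: a quantum-plane expression such as $x^{\beta}-y^{\beta}$ is a product computed in $A$, and its commutative representative in $B$ carries extra signs coming from $\sigma^{t}(y)=(-1)^{t}y$, which is precisely what controls which $\beta$-th roots of unity $\xi$ occur in ${\mathfrak R}^l$. One must carry these signs through carefully to land on the set displayed in the statement and to verify its stability under $\sigma$. By comparison, the determination $\hdet(\tau_{1,2,\lambda})=-1$ (in contrast to $\det=1$) is the other point requiring attention, but it is a one-line computation on the relation $r$; the remaining weight and degree-minimization arguments are routine.
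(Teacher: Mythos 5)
Your proposal follows essentially the same route as the paper: the group classification via \cite[Theorem 1.1 and Lemma 5.3]{KKZ2}, then a direct identification of the minimal-degree elements of $A_{\hdet^{\pm 1}}$ and of their degree-one divisors --- steps the paper itself dispatches with ``one can check directly'' and ``after an easy calculation.'' Your explicit devices for those steps (reading $\hdet$ off the action on the relation $yx-qxy$, and transporting left/right divisibility to the commutative polynomial ring through the graded twist) are sound and supply exactly the details the paper omits.
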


\begin{proof} 
By \cite[Theorem 1.1]{KKZ2}, $G$ is generated by 
quasi-reflections in the sense of \cite[p. 131]{KKZ2}. 
When $q\neq \pm 1$, $\Aut_{gr}(A)=(\Bbbk^{\times})^2$
and every quasi-reflection is a reflection in the 
sense of \cite[Definition 2.3(1)]{KKZ2}, namely, of the 
form in \eqref{E4.0.1}. One can check easily from this 
observation that $G\cong C_n \times C_m$. The statements
in part (1) are easy to check now.

When $q=-1$, one extra possibility is that $G$ is generated 
by mystic reflections in the sense of \cite[Definition 2.3(1)]{KKZ2}. 
In this case, by \cite[Lemma 5.3]{KKZ2}, $G$ is the group
$M(2,\alpha,\beta)$, and $A^G$ is the
commutative polynomial ring $\Bbbk [x^\alpha y^\alpha, x^{\beta}+
y^{\beta}]$ \cite[Proposition 5.4]{KKZ2}.  

(2) When $\alpha\geq 2$ one can check directly that nonzero 
elements of the minimal degree in $A_{\hdet}$ and $A_{\hdet^{-1}}$ are
$${\mathsf a}_{A,\Bbbk G}=xy(x^{\beta}-y^{\beta})\quad 
{\text{and}}\quad
{\mathsf j}_{A,\Bbbk G}=x^{\alpha -1}y^{\alpha-1}(x^{\beta}-y^{\beta})
$$
respectively. From this we obtain, after an easy calculation, that
$${\mathfrak R}^l({\mathsf j}_{A,\Bbbk G})=
{\mathfrak R}^r({\mathsf j}_{A,\Bbbk G})=
{\mathfrak R}^l({\mathsf a}_{A,\Bbbk G})=
{\mathfrak R}^r({\mathsf a}_{A,\Bbbk G})=
\{\Bbbk x,\Bbbk y\} \cup \{\Bbbk (x+\xi y)\mid \xi^{\beta}=1\}.$$
The assertion follows.

(3) When $\alpha=1$, the computation is similar to the one in part (2).
\end{proof}

By Lemma \ref{xxlem4.1}(2,3), $\Bbbk M(2,\alpha,\beta)$ is a true 
reflection Hopf algebra acting on $\Bbbk_{-1}[x,y]$ if and only if 
$\alpha=1$ or $2$. When $\alpha=1$, $M(2,1,\beta)$ is isomorphic to 
a binary dihedral group. Note in this case that the number of mystic 
reflections is the degree of ${\mathsf j}_{A,\Bbbk G}$, also 
equals to $|{\mathfrak R}^l({\mathsf j}_{A,\Bbbk G})|$. 
Further that the Jacobian (and hence the reflection arrangement) is 
central, but $A^G$ is not central in $A$.

For the rest of this section we give an example where $H$ is neither 
commutative or cocommutative. This example is the smallest possible  
in terms of dimensions, $H$ having $\Bbbk$-dimension 8 and
$A$ having global dimension 2. Even in this ``small'' example,
computations are still quite complicated, unfortunately. To save 
some space, some non-essential details are omitted, especially 
towards the end of the example. Some additional information concerning
this example is given in \cite{FKMW1} and \cite[Example 7.4]{KKZ3}.

\begin{example}
\label{xxex4.2}
Assume that ${\rm{char}}\; \Bbbk=0$. 
Let $H$ be the Kac-Palyutkin Hopf algebra $H_8$. By \cite[p.341]{BN}, 
$H$ is self-dual and it has no nontrivial dual cocycle twist in the 
sense of \cite{Ma2,Mo2}. Recall that $H$ is generated by $x, y, z$ and 
subject to the following relations:
$$x^2 = y^2 =1, \;\; xy=yx,\;\; zx=yz,$$
$$ zy=xz,\;\; z^2= \frac{1}{2}(1+x+y-xy). $$
The comultiplication of $H$ is determined by:
$$\begin{aligned}
\Delta(x) & = x\otimes x,\\ 
\Delta(y) & = y\otimes y,\\
\Delta(z) & = \frac{1}{2}(1\otimes 1 + 1\otimes x 
+ y\otimes 1 - y\otimes x)(z\otimes z).
\end{aligned}
$$
The group of grouplike elements in $H$ is 
$G(H) = \{1,x,y,xy\}$, the 
Klein four group. Let $\Bbbk_{i}[u,v]$ be the skew polynomial algebra 
generated by $u,v$ and subject to the relation
\begin{equation}
\label{E4.2.1}\tag{E4.2.1}
vu =i uv
\end{equation}
where $i^2=-1$. By \cite[Example 5.5]{RRZ2}, the Nakayama automorphism 
of $A$ is determined by
$$\mu: u \mapsto -i u, \quad v\mapsto i v$$
and, in dimension two, the twisted superpotential is trivially
the single relation, namely, 
$$\omega= vu-i uv.$$
By \cite[Example 7.4]{KKZ3}, $H$ acts on $A:=\Bbbk_{i}[u,v]$
inner-faithfully with commutative (but not central) regular fixed subring 
$A^H=\Bbbk[u^2+v^2,u^2v^2]$. Thus Hypotheses \ref{xxhyp0.1} (and hence
Hypotheses \ref{xxhyp3.2}) holds. 

It is easy to check that $AR_{\geq 1}\neq R_{\geq 1}A$. So the 
$H$-action on $A$ is not tepid, see Definition \ref{xxdef1.11}(4).
It is routine to check that the covariant algebra
$A^{cov\; H}:=A/(R_{\geq 1})$ is isomorphic to 
$A/(\Bbbk(u^2+v^2)\oplus A_{\geq 3})$, 
which has Hilbert series $1+2t+2t^2$. As a consequence, 
$A^{cov\; H}$ is not Frobenius, which is different from the 
classical (commutative) case and the case of the dual reflection 
groups in \cite[Theorem 0.4]{KKZ4}.

Recall from \cite[Example 7.4]{KKZ3} 
that there is a unique two-dimensional $H$-representation 
$V = \Bbbk u \oplus \Bbbk v$ given by the assignment:
$$x \rightarrow \begin{pmatrix}-1 & 0\\ 0 & 1 \end{pmatrix}, \;\; 
y \rightarrow \begin{pmatrix}1 & 0\\ 0 & -1\end{pmatrix}, \;\; 
z \rightarrow \begin{pmatrix}0 & 1 \\ 1 & 0\end{pmatrix},$$
which uniquely determines the $H$-action on $A=\Bbbk_i[u,v]$.

Our first goal is to calculate the Jacobian, the reflection arrangement 
and the discriminant of this $H$-action on $A$. Note that $x+y, xy,$ and 
$z^2$ are all central in $H$. Consider the central idempotents in $H$:
$$f_1 = (1 + x + y + xy)/4, \quad \text{and} \quad f_2= (1 -x-y+xy)/4.$$
It is easy to check that $f_1z^2 = z^2 f_1 = f_1$ and $f_2 z^2 = z^2 f_2 = -f_2$.
In addition we have the following two idempotents in $H$ that are not central:
$$f_3 = (1 - x + y - xy)/4, \quad \text{and} \quad f_4= (1 +x-y-xy)/4.$$
These idempotents satisfy  $f_if_j = 0$ for all $i\neq j$.
Using the above information, we define the following central idempotents of 
$H$, that correspond to the group of  grouplike elements $G(K)(=\{1, g, g', gg'\})$,
where $K$ is the dual Hopf algebra of $H$,
$$\begin{aligned}
p_{1} &= \inth = (f_1 + zf_1)/2 = (1 +x+y+xy + z +xz + yz + xyz)/8\\
p_{g} &= (f_1 - zf_1)/2 = (1 +x+y+xy - z -xz - yz - xyz)/8\\
p_{g'} &=  (f_2+ izf_2)/2 = (1 -x-y+xy +i z -ixz -i yz  +ixyz)/8\\
p_{gg'} &=  (f_2- izf_2)/2 = (1 -x-y+xy -i z +ixz + i yz  -ixyz)/8.
\end{aligned}
$$
Using the fact that $zu^2 = v^2, z v^2 = u^2, z (uv) = -iuv, z(u^3v) = iuv^3$, etc., 
we obtain
$$\begin{aligned}
p_{1} A &= A^H = R = \Bbbk[u^2+v^2,u^2v^2]\\
p_{g} A &= (u^2-v^2)R\\
p_{g'} A &= (uv)R\\
p_{gg'} A &= (u^3v+uv^3) R = (uv(u^2-v^2))R.
\end{aligned}
$$
As a consequence, we have the decomposition of $A_G$ into graded pieces 
(as in Lemma \ref{xxlem1.5}(3))
$$A_G = p_{1}A \oplus p_{g} A \oplus p_{g'} A \oplus p_{gg'}A=A^{(2)},$$ 
where $A^{(2)}$ is the second Veronese subring of $A$. It is clear that
$(u^2+v^2) u=u(u^2-v^2)$ which is not in $AR_{\geq 1}$. Hence 
$R_{\geq 1}A\neq AR_{\geq 1}$, and consequently, the $H$-action on $A$ is not
tepid in the sense of Definition \ref{xxdef1.11}(4). By an easy 
calculation, 
$$\xi(t)=h_A(t) (h_{R}(t))^{-1}=(1+t)(1+t+t^2+t^3)$$
which has degree 4. It follows from Corollary \ref{xxcor2.5}(2) then 
$\deg {\mathsf j}_{A,H}=4$. Hence,
\begin{equation}
\label{E4.2.2}\tag{E4.2.2}
{\mathsf j}_{A,H}=_{\Bbbk^{\times}} uv(u^2-v^2)
\end{equation}
and 
\begin{equation}
\label{E4.2.3}\tag{E4.2.3}
\hdet^{-1}=gg'.
\end{equation}
Since $\hdet^2=1$, we obtain that
\begin{equation}
\label{E4.2.4}\tag{E4.2.4}
{\mathsf a}_{A,H}={\mathsf j}_{A,H}=_{\Bbbk^{\times}} uv(u^2-v^2)
\end{equation}
and
\begin{equation}
\label{E4.2.5}\tag{E4.2.5}
\delta_{A,H}=_{\Bbbk^{\times}} u^2 v^2 (u^2-v^2)^2
=u^2v^2[(u^2+v^2)^2-4u^2v^2] \in R.
\end{equation}
As a consequence of \eqref{E4.2.4}, $H$ is a true reflection Hopf 
algebra. Using the fact that
$$u^2-v^2=(u+e^{\frac{3}{8} (2\pi i)}v)((u+e^{\frac{1}{8} (2\pi i)}v)
=(u+e^{\frac{7}{8} (2\pi i)}v)((u+e^{\frac{5}{8} (2\pi i)}v)$$
and that $u$ and $v$ are normal, we can calculate
\begin{equation}
\label{E4.2.6}\tag{E4.2.6}
{\mathfrak R}^l({\mathsf a}_{A,H})={\mathfrak R}^l({\mathsf j}_{A,H})
=\{ \Bbbk u, \Bbbk v, \Bbbk (u+e^{\frac{3}{8} (2\pi i)}v),
\Bbbk (u+e^{\frac{7}{8} (2\pi i)}v)\}
\end{equation}
and
\begin{equation}
\label{E4.2.7}\tag{E4.2.7}
{\mathfrak R}^r({\mathsf a}_{A,H})={\mathfrak R}^r({\mathsf j}_{A,H})
=\{ \Bbbk u, \Bbbk v, \Bbbk (u+e^{\frac{1}{8} (2\pi i)}v),
\Bbbk (u+e^{\frac{5}{8} (2\pi i)}v)\}.
\end{equation}

Since ${\mathsf j}_{A,H}$ is not normal (easy to check), $A{\mathsf j}_{A,H}$
is not a 2-sided ideal. So ${\mathfrak r}_{A,H}\neq A{\mathsf j}_{A,H}$.
By Lemma \ref{yylem3.13} (after verifying the hypotheses in Lemma \ref{yylem3.13}),
${\mathfrak r}_{A,H}$ is a subspace of $A{\mathsf j}_{A,H}$.

Our second goal is to calculate the radical ideal of this $H$-action.
Let 
$$E = (1-xy)/2 = 1- (p_{1}+p_{g}+p_{g'}+p_{gg'}),$$
a central idempotent of $H$ so 
$$H = \Bbbk p_{1} \oplus \Bbbk p_{g} \oplus  \Bbbk p_{g'} 
\oplus \Bbbk p_{gg'} \oplus E H.$$
We have the relations:
$$E f_3 = f_3, \;\; E f_4 = f_4,\;\; f_3f_4 
= f_4 f_3= 0, \;\; z f_3 = f_4 z,\;\; z f_4 = f_3 z.$$
Further
$$f_3 z^2 = \frac{1}{8} (1-x + y -xy) (1+x + y -xy) 
= \frac{1}{8}(2 - 2x + 2y -2 xy) = f_3$$
and similarly
$$f_4 z^2 = \frac{1}{8} (1+x - y -xy) (1+x + y -xy) 
= \frac{1}{8}(2 + 2x - 2y -2 xy) = f_4.$$

Hence let $m_{12} = f_3zf_4= f_3z = z f_4$ and $m_{21} = f_4z f_3 = f_4z = zf_3$.
Then 
$$\begin{aligned}
m_{12}m_{21} &= (f_3z)(z f_3)= f_3 z^2 f_3 = f_3 f_3 = f_3,\\
m_{21}m_{12} &= (f_4z)(z f_4) = f_4,
\end{aligned}
$$
and 
$$\begin{aligned}
m_{12}^2 &= (f_3zf_4)(f_3zf_4) = 0,\\
m_{21}^2 &= (f_4zf_3)(f_4zf_3) =0.
\end{aligned}
$$
So the subspace $EH$ is isomorphic to $2\times 2$-matrix, and for convenience, 
we write 
$$EH  \cong \begin{pmatrix} f_3 & m_{12}\\m_{21} & f_4 \end{pmatrix}
=\begin{pmatrix} m_{11} & m_{12}\\m_{21} & m_{22} \end{pmatrix}.$$

Next we find $x_{i,j}, y_{i,j}$ so that
$$\Delta(\smallint) = \Delta(p_{1}) 
= p_{1} \otimes p_{1} +  p_{g} \otimes p_{g} 
+  p_{g'} \otimes p_{g'} +  p_{gg'} \otimes p_{gg'}$$
$$ + \sum_{1 \leq i,j \leq 2} m_{ij}\otimes x_{ij} 
+ \sum_{1 \leq i,j \leq 2} y_{ij} \otimes m_{ij}.$$
First compute 
$$
\begin{aligned}
\Delta(\smallint) &= \Delta((1+x+y+xy) (1 +z)/8) = \Delta(1+x+y+xy) \Delta(1+z)/8\\
&= \frac{1}{8}(1 \otimes 1 + x \otimes x + y \otimes y + xy \otimes xy)(1 \otimes 1 \\
&\qquad\qquad+ \frac{z \otimes z  + z \otimes xz + yz \otimes z - yz \otimes xz}{2})\\
&= \frac{(1 \otimes 1 + x \otimes x + y \otimes y + xy \otimes xy)}{8}\\
&\qquad\qquad + \frac{(z \otimes z + xz \otimes xz + yz \otimes yz + xyz \otimes xyz)}{16}\\
&\qquad\qquad + \frac{(z \otimes xz + xz \otimes z + yz \otimes xyz + xyz \otimes yz)}{16}\\
&\qquad\qquad + \frac{(yz \otimes z + xyz \otimes xz + z \otimes yz + xz \otimes xyz)}{16}\\
&\qquad\qquad - \frac{(yz \otimes xz + xyz \otimes z + z \otimes xyz + xz \otimes yz)}{16}.
\end{aligned}
$$
After some tedious computation, we obtain that
$$\begin{aligned}
\Delta(\inth)  &= \Delta(p_{1}) = p_{1} \otimes p_{1} +  p_{g} \otimes p_{g} 
+  p_{g'} \otimes p_{g'} +  p_{gg'} \otimes p_{gg'}\\
&\qquad +  ( (f_3 \otimes f_3) +  (f_4 \otimes f_4) + (m_{12} \otimes m_{12}) 
+  (m_{21} \otimes m_{21}))/2.
\end{aligned}
$$

Let $L$ be the left ideal of $A$ generated by elements $w$ satisfying
\begin{align}
\label{E4.2.8}\tag{E4.2.8}
w&=\sum_{i} b_i (f_3\cdot a_i)+\sum_j d_j (m_{12}\cdot c_j)\\
\label{E4.2.9}\tag{E4.2.9}
0&=\sum_{i} b_i (m_{21}\cdot a_i)+\sum_j d_j (f_4 \cdot c_j)
\end{align}
for some $a_i, b_i, c_j, d_j$ in $A$. 
Let $L'$ be the left ideal of $A$ generated by 
elements $w$ satisfying
\begin{align}
\label{E4.2.10}\tag{E4.2.10}
w&=\sum_{i} b_i (f_4\cdot a_i)+\sum_j d_j (m_{21}\cdot c_j)\\
\label{E4.2.11}\tag{E4.2.11}
0&=\sum_{i} b_i (m_{12}\cdot a_i)+\sum_j d_j (f_3 \cdot c_j)
\end{align}
for some $a_i, b_i, c_j, d_j$ in $A$. 

It follows from the definition of the radical ideal 
[Definition \ref{xxdef2.8}(2)] we have

\begin{lemma}
\label{xxlem4.3}
Retain the above notation. The radical ideal is
$$({\mathfrak r}_{H,A})= A{\mathsf j}_{A,H}\cap L \cap L'.$$
\end{lemma}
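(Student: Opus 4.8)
The plan is to compute the radical ideal directly from its definition, ${\mathfrak r}_{A,H}={\mathcal P}_{A,H}\cap(A\# 1)$ with ${\mathcal P}_{A,H}=(A\# H)(1\# \inth)(A\# H)$, using the explicit formula for $\Delta(\inth)$ recorded just above. First I would simplify the left factor: since $\inth$ is the two-sided integral, $h\inth=\epsilon(h)\inth$, so $(a\# h)(1\# \inth)=\epsilon(h)\,a\# \inth$ and hence $(A\# H)(1\# \inth)=A\# \inth$. Thus every element of ${\mathcal P}_{A,H}$ is a sum of terms $(a\# \inth)(b\# k)=\sum_\alpha a\,(s_\alpha\cdot b)\# t_\alpha k$, where $\Delta(\inth)=\sum_\alpha s_\alpha\otimes t_\alpha$ splits into its grouplike part $\sum_{g\in G(K)}p_g\otimes p_g$ and its matrix part $\tfrac12(f_3\otimes f_3+f_4\otimes f_4+m_{12}\otimes m_{12}+m_{21}\otimes m_{21})$.

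The heart of the argument is the block decomposition $H=\bigoplus_{g}\Bbbk p_g\oplus EH$. Writing a prospective element as $w\# 1$ and expanding $1=\sum_g p_g+f_3+f_4$, membership $w\# 1\in{\mathcal P}_{A,H}$ is equivalent to matching the $A$-coefficient of each of the eight basis vectors $p_g,f_3,f_4,m_{12},m_{21}$: every grouplike coefficient and the $f_3$- and $f_4$-coefficients must equal $w$, while the $m_{12}$- and $m_{21}$-coefficients must vanish. I would read these off by computing $t_\alpha k$: for grouplike $t_\alpha=p_g$ one has $p_g k=g(k)p_g$, while for $t_\alpha\in EH$ the products $f_3k,m_{12}k$ lie in the first row and $f_4k,m_{21}k$ in the second row of $EH$, with scalar coefficients given by the matrix entries of $Ek$. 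The crucial structural point is that grouplike terms contribute only to grouplike coefficients (since $p_g\cdot EH=EH\cdot p_g=0$), and that within $EH$ the first column of $k$ governs the $(f_3,m_{21})$ coefficients while the second column governs the $(m_{12},f_4)$ coefficients. Hence the three ``channels'' decouple, and they can be realized simultaneously by superposing data supported on $\Bbbk G(K)$, on the first column of $EH$, and on the second column of $EH$ respectively.

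With the coefficients in hand I would identify each channel. The grouplike channel forces $w\in\bigcap_{g}AA_g$; since $A_g=f_gR$ gives $AA_g=Af_g$, and since Lemma \ref{xxlem3.7}(7) yields $f_{\hdet^{-1}}=f_{gg'}\in Af_g$ for every $g$, the term $Af_{gg'}$ is contained in every other and $\bigcap_g Af_g=Af_{gg'}=A{\mathsf j}_{A,H}$. The first-column channel produces exactly the pair \eqref{E4.2.8}--\eqref{E4.2.9} defining $L$ (the $b_i,d_j$ absorbing the scalars $\tfrac12 k_{11},\tfrac12 k_{21}$ and $a_i,c_j$ being the elements acted upon), so it is solvable precisely when $w\in L$; symmetrically the second-column channel yields \eqref{E4.2.10}--\eqref{E4.2.11} and is solvable precisely when $w\in L'$. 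Combining the decoupled channels gives $w\# 1\in{\mathcal P}_{A,H}$ iff $w\in A{\mathsf j}_{A,H}\cap L\cap L'$; in particular this intersection of left ideals is automatically the two-sided radical ideal.

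I expect the main obstacle to be the matrix bookkeeping of the middle step: one must track precisely which columns of $k$ feed which output coefficient, and then check that the resulting linear conditions line up verbatim with the defining relations of $L$ and $L'$. In particular, it is the symmetry $s_\alpha=t_\alpha$ in the matrix part of $\Delta(\inth)$ that makes the acting operators $f_3\cdot,\,m_{12}\cdot,\,f_4\cdot,\,m_{21}\cdot$ match \eqref{E4.2.8}--\eqref{E4.2.11} exactly. The necessity direction (reading the conditions off a given representation, where one shared $k_t$ couples the channels) and the sufficiency direction (building a representation by superposing separate datasets) each need a little care, but involve no new idea.
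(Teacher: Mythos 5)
Your proposal is correct and takes essentially the approach the paper intends: the paper's own ``proof'' is only the remark that one refines the computation of Lemma \ref{yylem3.13} using the displayed formula for $\Delta(\inth)$, and your block-by-block coefficient matching --- the grouplike channels yielding $\bigcap_g Af_g=A{\mathsf j}_{A,H}$ and the two columns of $EH$ yielding exactly the conditions \eqref{E4.2.8}--\eqref{E4.2.9} and \eqref{E4.2.10}--\eqref{E4.2.11} --- is precisely that refinement, with the decoupling/superposition of channels correctly justified by the orthogonality $p_gE=0$ and the matrix-unit multiplication in $EH$.
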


\begin{proof} The main idea here is to do finer computations
than ones in the proof of Lemma \ref{yylem3.13}. To save space,
details are omitted.
\end{proof}

As a consequence, one can calculate the radical ideal in this 
example:
\begin{equation}
\label{E4.2.12}\tag{E4.2.12}
{\mathfrak r}_{A,H}=\overline{{\mathfrak r}_{A,H}}=({\tilde{\mathfrak r}}_{A,H})
\end{equation}
where
\begin{equation}
\label{E4.2.13}\tag{E4.2.13}
{\tilde{\mathfrak r}}_{A,H}=_{\Bbbk^{\times}}uv(u^4-v^4)=_{\Bbbk^{\times}}
{\mathsf j}_{A,H} (u^2+v^2).
\end{equation}
Further, 
\begin{equation}
\label{E4.2.14}\tag{E4.2.14}
\Delta_{A,H}=\overline{\Delta_{A,H}}=({\mathsf j}_{A,H}^2 \; (u^2+v^2))
=(\delta_{A,H} \; (u^2+v^2)).
\end{equation}
This is the end of the example.
\end{example}

Complex reflection groups are important in many areas of current research, 
for example in defining rational Cherednik algebras.  In this paper we 
have presented generalizations of the various invariants that are used 
in studying complex reflection groups, their geometry, and their actions 
on polynomial rings (see for example \cite{BFI}).  The tools developed 
here, the Jacobian, the reflection arrangement and the discriminant, as 
well as the pertinency ideal, the radical of the $H$-action, the 
homological determinant, and the Nakayama automorphism should further 
the understanding of Hopf actions on AS regular algebras. 
If there is ever a version of rational Cherednik algebras for Artin-Schelter 
regular algebras, then one should understand better reflection Hopf 
algebras, and whence, the invariants introduced in this paper.

\end{document}